\newcommand{\Datum}{21. Oktober 2009}
\newcommand{\R}{\mathbb{R}}
\newcommand{\dR}{r}
\newcommand{\ueber}[2]{\genfrac(){0pt}{}{#1}{#2}}
\newcommand{\ex}{\ensuremath{\operatorname{ex}}}
\newcommand{\conv}{\ensuremath{\operatorname{conv}}}
\newcommand{\id}{\ensuremath{\operatorname{id}}}
\newcommand{\sd}{\ensuremath{\operatorname{sd}}}
\newtheorem{thm}{Theorem}[section]
\newtheorem{cor}[thm]{Corollary}
\newtheorem{lem}[thm]{Lemma}
\newtheorem{prop}[thm]{Proposition}
\newtheorem{prob}[thm]{Problem}
\newenvironment{que}{\begin{list}{}{\addtolength{\leftmargin}{-2.5em}}\item[]\it}{\end{list}}
\theoremstyle{definition}  
\newtheorem{Def}[thm]{Definition}
\newtheorem{conj}[thm]{Conjecture}
\newtheorem{rem}[thm]{Remark}
\begin{document}
\begin{titlepage}
 \Large 
 \noindent Freie Universit\"at Berlin\\
 Fachbereich Mathematik und Informatik\\
 \vspace{5cm}
 \begin{center}
  Diplomarbeit\\
  \vspace{1em}
  {\huge On the Complexity of\\ Embeddable Simplicial Complexes\\}
  \vspace{1em}
  Anna Gundert\\
  \Datum\\
  \vspace{5cm}
  Betreut von Prof.~Dr.~G\"unter M.~Ziegler\\
  (Technische Universit\"at Berlin)
 \end{center}
\end{titlepage}

\hspace{1em}\thispagestyle{empty}
\newpage
\thispagestyle{empty}
\section*{Danksagung}
F\"ur die Bereitstellung des Themas, viele hilfreiche Hinweise und die M\"oglichkeit, Anschlu\ss{} in seiner Arbeitsgruppe zu finden, m\"ochte ich mich bei dem Betreuer dieser Arbeit Prof.~Dr.~G\"unter M.~Ziegler bedanken.\\
Von der Arbeitsgruppe \glqq Diskrete Geometrie\grqq~der TU Berlin m\"ochte ich insbesondere Ronald Wotzlaw und Raman Sanyal f\"ur ihre stetige Hilfsbereitschaft Dank aussprechen. Mein weiterer Dank gilt Mathias Schacht f\"ur das Beantworten vieler Fragen zur extremalen Hypergraphentheorie.\\
Desweiteren danke ich Frau Monika Seid und Frau Sandra Breiter-Staufenbiel, die in gewissem Sinne die Voraussetzungen f\"ur das Verfassen dieser Arbeit geschaffen haben,  sowie meinen Eltern f\"ur ihre immer bedingungslose Unter\-st\"utzung.\\
Zu guter Letzt m\"ochte ich Frederik von Heymann f\"ur das gewissenhafte Lesen meiner Arbeit und st\"andigen Beistand danken.
\vspace{9.5cm}

\section*{Eidesstattliche Erkl\"arung}
Hiermit versichere ich, die vorliegende Arbeit selbst\"andig und unter ausschlie\ss{}\-licher Verwendung der angegebenen Literatur und Hilfsmittel erstellt zu haben.\\
Diese Arbeit wurde bisher in gleicher oder \"ahnlicher Form keiner anderen Pr\"u\-fungs\-kommission vorgelegt und auch nicht ver\"offentlicht.

\vspace{4em}
\noindent Berlin, den \Datum \hspace{14em} Anna Gundert
\tableofcontents
\chapter*{Abstract}
\addcontentsline{toc}{chapter}{Abstract}


This thesis addresses the question of the maximal number of \mbox{$d$-simplices} for a simplicial complex which is embeddable into $\R^{\dR}$ for some $d \leq \dR \leq 2d$.

A lower bound of $f_d(C_{\dR + 1}(n)) = \Omega(n^{\lceil\frac{\dR}{2}\rceil})$, which might even be sharp, is given by the cyclic polytopes.
To find an upper bound for the case $\dR=2d$ we look for forbidden subcomplexes. A generalization of the theorem of van Kampen and Flores yields those. Then the problem can be tackled with the methods of extremal hypergraph theory, which gives an upper bound of $O(n^{d+1-\frac{1}{3^d}})$.

We also consider whether these bounds can be improved by simple means.
%
\chapter*{Introduction}
\addcontentsline{toc}{chapter}{Introduction}
Questions on embeddability of simplicial complexes into Euclidean space are not only connected to topology but also to combinatorics. 
Sarkaria's color\-ing/embedding theorem (Theorem 5.8.2 \cite{Matousek.2003}), e.g., links the non-embed\-dability of a complex $K$ into Euclidean space of certain dimensions with the chromatic number of a graph associated with this complex.

The most well-studied such embeddability question is that of planar graphs, which can be considered as $1$-dimensional simplicial complexes that allow an embedding into $\R^2$.
Analogous questions for higher dimensions are far less understood and seem to get more complicated. Several problems that are tractable for the case of planar graphs, are far more complex and partly open in higher dimensions. Here are a few examples:

While every graph that embeds into the plane also has a straight-line embedding,
there are examples of simplicial complexes of higher dimension that admit a topological embedding into some $\R^{\dR}$, but not a linear (or piecewise linear) embedding into this space (e.g., \cite{Brehm.1983}, \cite{Bokowski.2000}, \cite{Schewe.2006}, \cite[p.858]{Matousek.2008}). On the other hand, the planar case is not the only one in which different embeddability properties agree: It is, e.g., known that every $d$-dimensional simplicial complex that embeds topologically into $\R^{\dR}$ for some $\dR$ with $\dR-d \geq 3$ is also piecewise linearly embeddable into $\R^{\dR}$ \cite{Bryant.1972}.

Also the algorithmic complexity of deciding whether a given $d$-dimensional simplicial complex embeds into some $\R^{\dR}$ is not solved for all pairs $(d,\dR)$. For planarity testing polynomial algorithms have been developed. Some of the higher-dimensional cases also have polynomial complexity, in others the question is known to be NP-hard, or even undecidable \cite{Matousek.2008}.

For the graph case it is well-known that the graphs $K_5$ and $K_{3,3}$ characterize non-planarity and are minimal non-planar graphs, i.e., all of their subgraphs are planar. The question of minimal non-embeddable complexes for higher dimensions is more complicated. While classes of such complexes are known (e.g., \cite{Gruenbaum.1969}, \cite{Zaks.1969b}, \cite{Sarkaria.1991}, \cite{Schild.1993}), it is also known that for $d\geq2$ and $\dR=2d$ no characterization via a finite set of minimal non-embeddable complexes is possible (\cite{Ummel.1973}, \cite{Zaks.1969a}).

%

It is well-known and not hard to show that a planar graph on $n$ vertices can have at most $3n-6$ edges.
We will be interested in higher-dimensional analogues of this result, i.e., in the maximal size of complexes of dimension $d\geq 2$ that are embeddable into a certain Euclidean space.

\pagebreak
More precisely, we study the following question:
\begin{que}
  For fixed $d$ and $\dR$, what is the maximal number of $d$-simplices for a complex on $n$ vertices that embeds into $\R^\dR$?
\end{que}
Because the complete $d$-complex embeds into $\R^{2d+1}$, for $\dR \geq 2d+1$ the maximal number of $d$-simplices one can get when embedding a complex on $n$ vertices into $\R^{\dR}$ is $\ueber{n}{d+1}= \Theta(n^{d+1})$. 

For the case $d=2$ this yields $\Theta(n^3)$ for embeddability into $\R^5$. One can also show that a complex which embeds into $\R^3$ can have at most $O(n^2)$ triangles. What happens in $\R^4$ is an open question.
Apart from two related conjectures (\cite[Conjecture~27]{Kalai.2002}, \cite{Kalai.2008}) it seems this question has not been discussed in the prior literature.

We will address the general question for $d\leq r \leq 2d$. Our goal is to find lower and upper bounds. Chapter \ref{basics} will give a more detailed presentation of the problem.
Chapter \ref{lowerBounds} addresses the lower, Chapter \ref{upperBounds} the upper bounds.

We will need basic notions from quite a few areas of mathematics.
Instead of explaining all necessary basic concepts in one starting chapter, I decided to introduce things whenever they are needed as we go along .

In Chapter \ref{lowerBounds} we achieve a lower bound of the order $\Omega(n^{\lceil\frac{\dR}{2}\rceil})$ for every instance of the problem by looking at examples of embeddable complexes. These are given by the boundary complexes of polytopes.
We will see that the cyclic polytopes yield the largest examples coming from polytopes, even from simplicial spheres.
We then show that in the case $\dR=2d$ these examples cannot be improved by simply adding further simplices.

Chapter \ref{upperBounds} gives an upper bound for the case $\dR=2d$, where the lower bound is of the order $\Omega(n^d)$. The smallest interesting case here is $d=2$ and $\dR=4$.
We approach the question of how many $d$-simplices a complex embeddable into $\R^{2d}$ can have at most by looking for forbidden subcomplexes.
We exclude Schild's class of minimal non-embeddable complexes \cite{Schild.1993}, which seems to include all known examples of such complexes.

Turning to the methods of extremal hypergraph theory, we use a result of Erd\H{o}s \cite{Erdos.1964} on complete $k$-partite $k$-graphs with partition sets of fixed size.
This yields an upper bound of the order $O(n^{d+1-\frac{1}{3^d}})$, which improves the trivial upper bound of $\ueber{n}{d+1}=O(n^{d+1})$.
As Erd\H{o}s' result only estimates the extremal quantity in question, we then consider how much could be gained by a better estimate. We see that in this way the bound could not be improved to yield more than $O(n^{d+1-\frac{2(d+1)}{3^{d+1}-1}})$.

%
\chapter{Embedding Simplicial Complexes}\label{basics}
This chapter introduces the main question that will be addressed in this thesis. We will first explore the notions needed to phrase the problem, namely simplicial complexes and their embeddings into Euclidean space. Then we will pose the question and discuss some first aspects.
\section{Simplicial Complexes}\label{simplicialComplexes}

Simplicial complexes are combinatorial objects that can be used to model certain topological spaces in a discrete setting.
The combinatorial concept of abstract simplicial complexes has a geometric counterpart, geometric simplicial complexes: subspaces of $\R^n$ that are built from simple building blocks. These give the connection to topology.
We will only be dealing with finite simplicial complexes.
Good sources for more information on this are, e.g., \cite{Matousek.2003} and \cite{Munkres.2005}.
Since we will rarely consider any other types of complexes (e.g., polytopal, $\Delta$- or CW-complexes) the term ``complex'' without further specification will always refer to a simplicial complex.

Let us begin with the definition of an abstract simplicial complex.
\begin{Def}[abstract simplicial complex]
 Let $V$ be a finite set.
 An \emph{abstract simplicial complex} on the vertex set $V = V(K)$ is a non-empty family $K \subseteq \mathcal{P}(V)$ of subsets of
 $V$ that is hereditary, i.e.:
 \[
  F \in K, G \subset F \Rightarrow G \in K.
 \]
 Members of $K$ are called \emph{simplices}.

 The \emph{dimension} of a simplex $F \in K$ is $\dim(F) \mathrel{\mathop:}= |F|-1$.
 The \emph{dimension} of $K$ is $\dim(K) \mathrel{\mathop:}= \max\left\{\dim(F)\;\middle|\; F \in K\right\}$.
 Simplices and complexes of dimension $d$ are called \emph{$d$-simplices} and \emph{$d$-complexes} respectively.
\end{Def}

Observe that, as a simplicial complex is non-empty, it will always contain the empty set.
To define the corresponding geometric concept we need some basic terminology:

\begin{Def}[affine independence, geometric simplex, face]\hspace{1em}\\
 Let $X=\{x_0,x_1,\ldots,x_n\} \subset \R^\dR$.
 \begin{enumerate}
  \item The set $X$ is called \emph{affinely independent} if for $\lambda_0,\lambda_1,\ldots,\lambda_n \in \R$ the equations $\sum_{i=0}^n\lambda_i=0$ and $\sum_{i=0}^n\lambda_ix_i=0$ imply that $\lambda_i=0$ for all $0 \leq i \leq n$.
  \item If $X$ is affinely independent, its convex hull $\sigma = \conv(X)$ is a \emph{(geometric) simplex} of dimension $n$ (or an $n$-simplex).
   The points $x_0,x_1,\ldots,x_n$ are the \emph{vertices} of $\sigma$.
   Note that an $n$-simplex has $n+1$ vertices.
  \item For an affinely independent set $X$, any subset $X'\subseteq X$ is also affinely independent and hence its convex hull is also a simplex. It is called a \emph{face} of $\sigma = \conv(X)$. In particular, every simplex has the empty set as a face.
 \end{enumerate}
\end{Def}
The affine hull of finitely many points $x_0,x_1,\ldots,x_n \in \R^\dR$
can be described as the set 
$\left\{\sum_{i=0}^n\lambda_ix_i\;\middle|\;\sum_{i=0}^n\lambda_i=1\right\}$.
Thus, for a set $X=\{x_0,x_1,\ldots,x_n\}$ to be affinely dependent means that at least one point in $X$, w.l.o.g. $x_n$, lies in the affine hull of the others. 
Because this is equivalent to the linear dependence of the set $\{x_0-x_n,x_1-x_n,\ldots,x_{n-1}-x_n\}$,
this shows that the maximal size of an affinely independent set in $\R^\dR$ is $\dR+1$.
This is equivalent to asking for the affine hull of $X$ to have dimension $n$.

Let us consider some examples of simplices:
A $0$-simplex is just a point. Any two distinct points are affinely independent. A set of three points is affinely independent if the three points do not lie on a common line. Thus, any line segment can be considered as a $1$-simplex, a triangle as a $2$-simplex. A \mbox{$3$-simplex} is a tetrahedron, the convex hull of four points that do not lie in a common plane.
%
\begin{figure}[ht]
\centering
\includegraphics[scale=0.35]{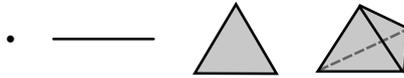}
\caption{Some simplices of low dimension.}\label{simplices}
\end{figure}

Simplices are the building blocks for geometric simplicial complexes:
\begin{Def}[geometric simplicial complex]
 A \emph{geometric simplicial complex} in $\R^\dR$ is a non-empty family $\Delta$ of geometric simplices in $\R^\dR$ that fulfills the following conditions:
 \begin{enumerate}
  \item 
   If $\sigma \in \Delta$ and $\sigma'$ is a face of $\sigma$, then $\sigma' \in \Delta$.
   \item
    If $\sigma_1, \sigma_2 \in \Delta$, then $\sigma_1 \cap \sigma_2$ is a face of both $\sigma_1$ and $\sigma_2$.
 \end{enumerate}
 The \emph{dimension} of a simplicial complex $\Delta$ is $\dim(\Delta) \mathrel{\mathop:}= \max\left\{\dim(\sigma)\;\middle|\;\sigma \in \Delta\right\}$; its vertex set
 $V(\Delta)$ consists of all vertices of simplices of $\Delta$.
\end{Def}

\begin{figure}[ht]
\centering
\includegraphics[scale=0.5]{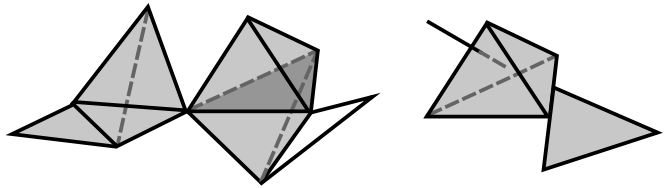}
\caption{A simplicial complex of dimension $3$ and a family of simplices that is not a simplicial complex.}
\end{figure}

A simple example for a geometric simplicial complex is the set of all faces of a simplex $\sigma$, called the \emph{boundary} $\partial\sigma$ of the simplex. Also the simplex itself can be considered as a simplicial complex: Take its boundary and add the simplex.
For a $d$-simplex, the corresponding abstract complex is simply the set of all subsets of its vertices, which is isomorphic to the power set of $[d]$ and is denoted by $\Delta_d$.

How are abstract and geometric simplicial complexes connected in general?
A geometric simplicial complex $\Delta$ gives rise to an abstract complex $K$ in a straight-forward way:
 The vertices of $K$ are just the vertices of $\Delta$: $V(K) = V(\Delta)$. A set $F \subset V(\Delta)$ forms a simplex of $K$ if $F$ is the vertex set of a simplex in $\Delta$. That means we just use the underlying set structure.
For any $K'$ that is isomorphic to $K$, the geometric complex $\Delta$ is called a \emph{geometric realization} of $K'$.

In this context ``isomorphic'' means the following:
%
\begin{Def}[simplicial map, isomorphism]
 Let $K_1, K_2$ be two abstract simplicial complexes.
 A map \mbox{$f\!:\!V(K_1) \to V(K_2)$} between the vertex sets of the two complexes is a \emph{simplicial map} from $K_1$ to $K_2$ if $f(F) \in K_2$ for all $F \in K_1$. 
A simplicial map from $K_1$ to $K_2$ is an \emph{isomorphism} if it is bijective on the vertex sets and if its inverse is also simplicial, i.e., a map \mbox{$f\!:\!V(K_1) \to V(K_2)$} that induces a bijection between the simplices of $K_1$ and $K_2$.
\end{Def}

For any abstract simplicial complex $K$ with $|V(K)|=n$ we can find a geometric realization in $\R^{n-1}$ by identifying its $n$ vertices with the vertices of a geometric $(n-1)$-simplex in $\R^{n-1}$.
We can consider two geometric simplicial complexes as (combinatorially) equivalent if they are geometric realizations of isomorphic abstract complexes. With this notion, all geometric realizations of an abstract complex are equivalent, even though they might lie in surrounding spaces of different dimensions.

The union of all simplices in a geometric simplicial complex is a topological space. We will see that equivalent complexes yield homeomorphic spaces.
\begin{Def}[polyhedron of a geometric simplicial complex, triangulation]
 Let $\Delta$ be a geometric simplicial complex in $\R^\dR$.
 \begin{enumerate}
  \item
   The \emph{polyhedron} of $\Delta$ is the topological space $||\Delta||$ on the set
   $
    \bigcup_{\sigma \in \Delta} \sigma
   $
   endowed with the following topology: A set $O \subseteq ||\Delta||$ is open if and only if $O \cap \sigma$ is open in $\sigma$ for every $\sigma \in \Delta$, where every $\sigma$ carries the subspace topology inherited from $\R^\dR$.
  \item
   If $X$ is a topological space that is homeomorphic to $||\Delta||$, we call $\Delta$ a \emph{triangulation} of $X$.
 \end{enumerate}
\end{Def}

Note that in the case of finite complexes $\Delta$, which we are considering, the topology on $||\Delta||$ agrees with the subspace topology inherited from $\R^\dR$.


Simple examples for spaces that admit a triangulation are the $d$-ball, which is homeomorphic to the polyhedron of any $d$-simplex, and the $d$-sphere, homeomorphic to the polyhedron of the boundary of a $d$-simplex.

With an abstract simplicial complex $K$ we can now associate $||\Delta||$ for a geometric realization $\Delta$ of $K$.
A complex might (and will) have several realizations, do these yield different topological spaces?
We will see that this cannot happen: Any two isomorphic abstract complexes give rise to homeomorphic spaces.

From a simplicial map between two abstract simplicial complexes we get a map between the polyhedra of the corresponding geometric complexes:
\begin{Def}[affine extension of a simplicial map]
 Let $\Delta_1$ and $\Delta_2$ be two geometric simplicial complexes and let $K_i$ be the abstract complex corresponding to
 $\Delta_i$.

 For any simplicial map $f\!:\!V(K_1) \to V(K_2)$ from $K_1$ to $K_2$ its \emph{affine extension}
\[
 ||f||\!:\!||\Delta_1|| \to ||\Delta_2||
\]
is defined by
\[
 ||f||(x) = \sum_{i=0}^{k}\lambda_i f(v_i)
\]
 for $x=\sum_{i=0}^{k}\lambda_i v_i \in \Delta_1$.
 Here we use that every point in $||\Delta_1||$ has a unique representation as a convex combination of the vertices of the minimal simplex in $\Delta_1$ that contains it.
\end{Def}
This map is continuous and for isomorphic complexes it is a homeomorphism:
\begin{prop}[{e.g., \cite[Proposition 1.5.4]{Matousek.2003}}]
 For any simplicial map $f$ its affine extension $||f||$ is continuous.
 If $f$ is an isomorphism, $||f||$ is a homeomorphism.
\end{prop}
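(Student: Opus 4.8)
The plan is to reduce everything to two elementary ingredients: affine maps on a single simplex are continuous, and a map out of a space covered by \emph{finitely many} closed sets is continuous as soon as its restriction to each of them is. First I would make sure $||f||$ is well defined. A point $x\in||\Delta_1||$ lies in a unique minimal simplex $\sigma=\conv(v_0,\dots,v_k)$ and there has a unique barycentric representation $x=\sum_{i=0}^k\lambda_i v_i$ with all $\lambda_i>0$; if $x$ also lies in some other simplex $\tau$, then $x\in\sigma\cap\tau$, which is a common face of both, so the minimal simplex of $\Delta_1$ through $x$ computed inside $\tau$ is again $\sigma$, and the defining formula for $||f||(x)$ does not depend on which simplex we start from.

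Next I would prove continuity. Fix $\sigma=\conv(v_0,\dots,v_k)\in\Delta_1$. Since $f$ is simplicial, the images $f(v_0),\dots,f(v_k)$ are the vertices of a simplex $f(\sigma)\in\Delta_2$ (of dimension at most $k$, as $f$ need not be injective), and the restriction of $||f||$ to $\sigma$ is the affine map determined by $v_i\mapsto f(v_i)$; in barycentric coordinates it is $(\lambda_0,\dots,\lambda_k)\mapsto\sum_i\lambda_i f(v_i)$, the restriction of a linear map between the ambient Euclidean spaces, hence continuous. Now $||\Delta_1||$ is covered by the finitely many simplices of $\Delta_1$, each of which is a closed subset and carries the subspace topology from $\R^{\dR}$, which (for a finite complex, as remarked right after the definition of the polyhedron) coincides with the topology of $||\Delta_1||$. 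As $||f||$ is continuous on each of these closed pieces and the pieces are compatible on their overlaps, the pasting lemma gives continuity of $||f||$ on all of $||\Delta_1||$.

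For the homeomorphism claim I would use functoriality of the affine extension. Directly from the definition, $||\id||=\id$ and $||g\circ h||=||g||\circ||h||$ for composable simplicial maps $h$ and $g$: evaluating both sides on $x=\sum_i\lambda_i v_i$ yields $\sum_i\lambda_i\, g(h(v_i))$, where one uses that $||g||$ is affine on the simplex spanned by the (not necessarily distinct) vertices $h(v_i)$. If $f$ is an isomorphism, then $f^{-1}$ is simplicial, so $||f^{-1}||$ is continuous by the first part, and functoriality gives $||f||\circ||f^{-1}||=||f^{-1}\circ f||=\id$ and symmetrically on the other side. Thus $||f||$ is a continuous bijection with continuous inverse, i.e., a homeomorphism.

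The step that needs the most care is not any single computation but the pasting argument: one must invoke the fact that for a finite geometric complex the topology of $||\Delta_1||$ agrees with the Euclidean subspace topology, so that the simplices genuinely are closed sets and the finite version of the pasting lemma applies (for infinite complexes this would require the weak topology and a slightly different argument). The remaining points — well-definedness, affineness on each simplex, and functoriality — are routine.
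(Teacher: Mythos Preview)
Your argument is correct and is the standard one (well-definedness via the unique minimal simplex, continuity via affineness on each closed simplex plus the finite pasting lemma, homeomorphism via functoriality $||g\circ h||=||g||\circ||h||$ and $||\id||=\id$). The paper itself does not prove this proposition at all: it simply quotes it from Matou\v{s}ek's book, so there is nothing to compare your approach to --- you have supplied a complete proof where the paper chose to cite one.

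One small slip worth fixing: in the functoriality step you write $||f||\circ||f^{-1}||=||f^{-1}\circ f||$, but the composite you mean on the right is $||f\circ f^{-1}||$ (and symmetrically for the other order). Both equal $||\id||=\id$, so the conclusion is unaffected, but the intermediate expression as written has the factors swapped.
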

Thus, if $\Delta_1$ and $\Delta_2$ are two geometric realizations of the same abstract simplicial complex $K$, the map $||\id_K||$ certifies that $||\Delta_1||$ and $||\Delta_2||$ are homeomorphic. This shows that, up to homeomorphism, an abstract simplicial complex $K$ gives rise to a unique topological space, the \emph{polyhedron} of $K$, which we will denote by $||K||$.

We can therefore stop to distinguish between abstract and geometric complexes and will from now on only talk of abstract complexes and their realizations.

We now introduce some more notions connected to simplicial complexes.
In most of our considerations we will restrict our attention to complexes consisting of the faces of simplices of a fixed dimension.
These are called ``pure'':
\begin{Def}[pure complex]
 A $d$-dimensional simplicial complex $K$ is \emph{pure} if every simplex of $K$ is a face of some $d$-simplex in $K$. Consequently, all maximal simplices of a pure complex $K$ have the same dimension.
\end{Def}

\begin{figure}[ht]
\centering
\includegraphics[scale=0.5]{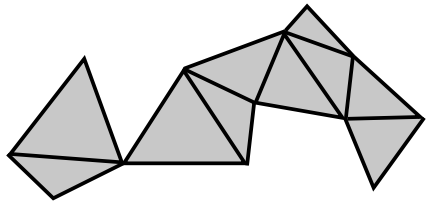}
\caption{A pure $2$-complex.}
\end{figure}

In what is to come we will often consider subcomplexes of simplicial complexes. One of our most important examples will be the $k$-skeleton, a special subcomplex, of a certain complex.
\begin{Def}[subcomplex, $k$-skeleton]
 Let $K$ be a simplicial complex.
\begin{enumerate}
  \item 
   A \emph{subcomplex} of $K$ is a subset $L$ of $K$ such that $L$ is also a simplicial complex,
   i.e., $(F \in L, G \subset F \Rightarrow G \in L).$
  \item
   For $k \leq \dim(K)$ the \emph{$k$-skeleton} of $K$ is the subcomplex consisting of all simplices of dimension at most $k$:
   \[
    K^{\leq k} \mathrel{\mathop:}= \left\{F \in K \;\middle|\; \dim(F) \leq k \right\}.
   \]
 \end{enumerate}
\end{Def}

In addition to subcomplexes, we will also consider minors of simplicial complexes. For the definition of minors and also for several notions from piecewise-linear topology, we will need the concept of a subdivision of a simplicial complex.

\begin{Def}[subdivision]
A simplicial complex $K'$ is a \emph{subdivision} of $K$ if there are geometric realizations $\Delta$ and $\Delta'$ of $K$ and $K'$ such that $||\Delta||=||\Delta'||$ and each simplex of $\Delta'$ is contained in some simplex of $\Delta$.
\end{Def}

As the last definition in this section, we will now introduce an operation for topological spaces, the join, which has certain advantages over the Cartesian product when working with simplicial complexes (e.g., \cite[Section 4.2]{Matousek.2003}).
The Cartesian product of two (geometric) simplices of dimension at least $1$ is, e.g., not a simplex, whereas the join of two simplices is. 
In the next section, we will encounter a family of complexes that consists of joins of certain complexes. As we will simply state results on their topological properties and only study combinatorial aspects, we now just introduce the combinatorial version of this operation: the join of two abstract simplicial complexes.
\begin{Def}[join]
 Let $K_1, K_2$ be two abstract simplicial complexes.
 The simplicial complex $K_1*K_2$ defined by the following data is called the \emph{join} of $K_1$ and $K_2$:
\begin{itemize}
 \item As vertex set it has the disjoint union of the vertex sets of $K_1$ and $K_2$:
 \[
  V(K_1*K_2) \mathrel{\mathop:}= V(K_1) \mathbin{\dot\cup} V(K_2).
 \]
 \item Its simplices are given by disjoint unions of simplices in $K_1$ and simplices in $K_2$:
 \[
  K_1*K_2 \mathrel{\mathop:}= \left\{F_1 \mathbin{\dot\cup} F_2 \;\middle|\; F_1 \in K_1, F_2 \in K_2\right\}.
 \]
\end{itemize}
The disjoint union $S_1 \mathbin{\dot\cup} S_2$ of two sets $S_1$ and $S_2$ can, e.g., be considered as the set 
$(S_1\times\{1\} \cup S_2\times\{2\}).$
Note that for a $d_1$-complex $K_1$ and a $d_2$-complex $K_2$ the dimension of the join $K_1*K_2$ is \[\dim(K_1*K_2)=d_1+d_2+1.\]
\end{Def}

\section{Embeddings}\label{embeddings}
%
%
In this section, we study embeddings of simplicial complexes into Euclidean space.
We will discuss different types of embeddability and consider the minimal dimension in which a fixed complex can be embedded.

As a start remember the definition of an embedding for general topological spaces:

\begin{Def}[embedding]
 Let $X$ and $Y$ be topological spaces.

 An \emph{embedding} of $X$ into $Y$ is a map $f\!:\!X \to Y$ that is a homeomorphism onto its image $f(X)$.
 This means that $f\!:\!X \to f(X)$ with the induced topology on $f(X)\subseteq Y$ is bijective, continuous and the inverse function $f^{-1}$ is also continuous.
\end{Def}

In this thesis we consider embeddings of the form $f\!:\!||K|| \to \R^{\dR}$ for $d$\mbox{-}dimen\-sional simplicial complexes $K$.
From now on, $d$ will always refer to the dimension of the complex, whereas $\dR$ will be the dimension of the Euclidean space in which we want to embed the complex.
If $||K||$ embeds into $\R^{\dR}$ we denote this by $||K|| \hookrightarrow \R^{\dR}$ and often just say that $K$ embeds into $\R^{\dR}$.

Geometric realizations are of course examples of such embeddings. They can be considered as linear embeddings, where \emph{linear} for a map of a simplicial complex into $\R^{\dR}$ means that it is linear on each simplex.

A \emph{piecewise linear (PL)} embedding of a simplicial complex $K$ into $\R^{\dR}$ is a map $f\!:\!||K|| \to \R^{\dR}$ that is a linear map of some subdivision $K'$ of $K$ into $\R^{\dR}$ as well as an embedding.

\begin{figure}[ht]
\centering
\includegraphics[scale=0.4]{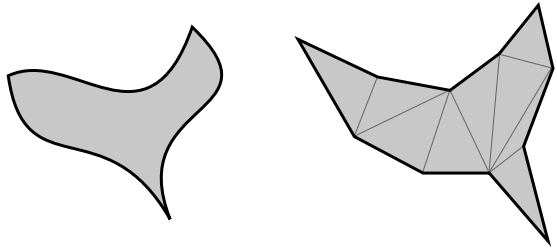}
\caption{A topologically and a piecewise linearly embedded $2$-simplex.}
\end{figure}

A complex that admits a topological embedding into some $\R^{\dR}$ does not necessarily also admit a PL (or a linear) embedding into this space.
\label{PLtop}
In \cite{Matousek.2008} Matou\v{s}ek, Tancer and Wagner study the computational complexity of deciding whether a given $d$-dimensional simplicial complex embeds piecewise linearly into $\R^{\dR}$. They also compare linear, PL and topological embeddability; we repeat parts of their discussion here.

There are some cases where there is no difference:
Every $d$-dimensional simplicial complex that embeds topologically into $\R^{\dR}$ for some $\dR$ with $\dR-d \geq 3$ is also PL embeddable into $\R^{\dR}$ \cite{Bryant.1972}.
The same is true for $d=2$ and $\dR=3$ \cite[p.858]{Matousek.2008} and also for planar graphs (i.e., $1$-dimensional simplicial complexes that embed into $\R^2$) (e.g., \cite[p.21]{Bollobas.2002}).
A well-known fact from graph theory, F\'ary's Theorem (e.g., \cite[p.246/247]{West.2001}), states that for every simple planar graph there even exists a straight-line (i.e., linear) embedding. 

But there are examples which show that in general these embeddability properties are not the same:
There is a $4$-dimensional complex that embeds topologically, but not PL into $\R^5$ \cite[p.858]{Matousek.2008}.
In \cite{Brehm.1983} Brehm presents a $2$-dimensional simplicial complex $K$ (a triangulated M\"obius strip) that embeds into $R^3$ but not linearly.
It has also been shown that, while trivially embedding topologically into $\R^3$, no triangulation of a surface of genus $6$ using only $12$ vertices admits a linear embedding in $\R^3$: \cite{Bokowski.2000} proves this for one example, \cite{Schewe.2006} for all.
For higher dimensions, Brehm and Sarkaria \cite{Brehm.1992} showed that for every $d\geq2$ and every $\dR$ such that $d+1 \leq \dR \leq 2d$ there is a finite $d$\mbox{-}dimensional complex $K$ that admits a topological but no linear embedding into $\R^{\dR}$. For given $l$, it is even possible to construct $K$ such that $K$ can be embedded into $\R^{\dR}$, but the $l$-th barycentric subdivision $\sd^l(K)$ cannot be embedded linearly into $\R^{\dR}$.

Although our main interest will lie in topological embeddings, we will occasionally point out where our results are relevant to questions concerning the other embeddability properties.

Let us now consider the minimal dimension in which we can embed a given complex.
We have already seen that a simplicial complex on $n$ vertices always has a geometric realization in $\R^{n-1}$.
Here is a better result which depends only on the dimension of the complex, not on the number of vertices:
\begin{thm}[{e.g., \cite[Theorem 1.6.1]{Matousek.2003}}]\label{geomRealization}
 Every finite $d$-dimensional simplicial complex $K$ has a geometric realization $\Delta$ in $\R^{2d+1}$.
\end{thm}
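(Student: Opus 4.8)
The plan is to place the $n$ vertices of $K$ in general position in $\R^{2d+1}$ and show that the affine extension of the resulting vertex map is injective, i.e., that the images of the simplices only intersect along common faces. Concretely, I would choose points $p_1,\dots,p_n \in \R^{2d+1}$ such that any $2d+2$ of them are affinely independent; the moment curve $t \mapsto (t,t^2,\dots,t^{2d+1})$ supplies such a configuration, since any $2d+2$ points on it are affinely independent (a Vandermonde-type argument), but a generic choice works equally well. Map the $i$-th vertex of $K$ to $p_i$ and extend affinely over each simplex as in the definition of the affine extension of a simplicial map; call this map $h$.

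The key step is the verification that $h$ is an embedding. Since $K$ is a $d$-complex, for any two simplices $\sigma = \conv\{p_i : i \in F\}$ and $\tau = \conv\{p_j : j \in G\}$ with $F,G \in K$ we have $|F \cup G| \le 2(d+1) = 2d+2$, so the points indexed by $F \cup G$ are affinely independent. I would then use the standard fact that if a finite set of points is affinely independent, then for any two subsets $A, B$ of it, $\conv(A) \cap \conv(B) = \conv(A \cap B)$. Applying this with $A = \{p_i : i \in F\}$ and $B = \{p_j : j \in G\}$ gives that the geometric simplices meet exactly in the face spanned by $F \cap G$, which is a common face of both. Hence the images of the simplices of $K$ form a geometric simplicial complex $\Delta$, and $h$ induces an isomorphism between $K$ and the abstract complex of $\Delta$; therefore $\Delta$ is a geometric realization of $K$.

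A couple of routine points remain: one should check that the collection $\Delta$ is closed under taking faces (immediate, since $K$ is) and that $h$ restricted to each simplex is a homeomorphism onto its image, so that $\|h\|$ is continuous with continuous inverse — this follows from the earlier proposition on affine extensions once we know $h$ is injective and bijective onto its image, together with the fact that a continuous bijection from a compact space ($\|K\|$ is a finite union of compact simplices) to a Hausdorff space is a homeomorphism.

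The main obstacle is really just the intersection lemma for convex hulls of affinely independent point sets, i.e., that $\conv(A) \cap \conv(B) = \conv(A \cap B)$. This is where affine independence of the combined vertex set is used in an essential way, and it is exactly here that the bound $2d+1$ enters: we need every subset of size up to $2d+2$ to be affinely independent, and $\R^{2d+1}$ is the smallest Euclidean space guaranteeing the existence of such a configuration (a set of $2d+2$ affinely independent points cannot live in $\R^{2d}$). I would prove the lemma by taking $x \in \conv(A) \cap \conv(B)$, writing it as a convex combination over $A$ and over $B$, subtracting the two representations to get coefficients summing to zero against affinely independent points, concluding all coefficients vanish, and deducing that $x$ only uses vertices in $A \cap B$. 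Everything else is bookkeeping.
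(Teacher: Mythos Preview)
Your argument is correct and follows essentially the same approach as the paper: place the vertices on the moment curve in $\R^{2d+1}$, use that any $2d+2$ of them are affinely independent, and conclude that for two simplices $F_1,F_2\in K$ the union $F_1\cup F_2$ spans a simplex, whence $\conv(F_1)\cap\conv(F_2)$ is a common face. The paper is terser, simply remarking that $\sigma_1,\sigma_2$ are faces of a bigger simplex, whereas you spell out the intersection lemma $\conv(A)\cap\conv(B)=\conv(A\cap B)$ and the homeomorphism step explicitly; both are fine.
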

To prove this we will use the moment curve which will also be of use later on.
\begin{Def}[moment curve]
 The curve $\gamma\!:\!\R \rightarrow \R^{\dR}$ defined by 
\[\gamma(t) = (t,t^2,\ldots,t^{\dR})\]
is called the \emph{moment curve} in $\R^{\dR}$.
\end{Def}
\begin{lem}[{e.g., \cite[Lemma 1.6.2]{Matousek.2003}}] \label{momentcurve}
 Any $\dR+1$ distinct points on the moment curve in $\R^{\dR}$ are affinely independent.
\end{lem}
\begin{proof}
 To check affine dependence of points $x_0,x_1,\ldots,x_{\dR}$ we have to solve the system of linear equations
 $\sum_{i=0}^{\dR} {\lambda}_i x_i = 0$, $\sum_{i=0}^{\dR} {\lambda}_i = 0$.
 For points $x_i$ on the moment curve this leads to determining the rank of the matrix 
 \[
 \begin{pmatrix}
  1& 1& \ldots& 1\\
  t_0& t_1& \ldots& t_{\dR}\\
  \vdots& \vdots& & \vdots\\
  t_0^{\dR}& t_1^{\dR}& \ldots& t_{\dR}^{\dR}
 \end{pmatrix},
 \]
 a Vandermonde matrix.
 Its determinant is known to be $\prod_{0 \leq i < j \leq \dR}\left(t_j - t_i\right)$ and therefore non-zero for pairwise distinct $t_i$.
 Thus, the zero vector is the only solution.
\end{proof}
\begin{proof}[Proof of Theorem \ref{geomRealization}]
 Let $K$ be a finite $d$-dimensional simplicial complex.

 Place its vertices on the moment curve in $\R^{2d+1}$ via some $f\!:\!V(K) \rightarrow \R^{2d+1}$.
 Because a simplex in $K$ has at most $d+1 \leq 2d+2$ vertices, it corresponds to an affinely independent set.
 Thus, by taking the convex hulls of sets corresponding to simplices in $K$, we get a collection of geometric simplices.

 To see that this collection is a simplicial complex, we have to show that for two simplices $F_1$ and $F_2$ in $K$ the intersection
 $\sigma_1 \cap \sigma_2$ of the corresponding simplices $\sigma_i = \conv(f(F_i))$ is a face of both $\sigma_1$ and $\sigma_2$.

 But $|F_1 \cup F_2| \leq 2d+2$, so the set of involved vertices is affinely independent.
 This means $\sigma_1$ and $\sigma_2$ are faces of a bigger simplex and their intersection is a face of both.
\end{proof}
Thus, every $d$-dimensional complex can be embedded in $\R^{2d+1}$.
The following theorem by van Kampen \cite{vanKampen.1932a, vanKampen.1932b} and Flores \cite{Flores.1933, Flores.1934} shows that for some complexes this is the best possible dimension. A modern treatment can be found in \cite{Matousek.2003}.
\begin{thm}[Van Kampen-Flores Theorem]\label{vK-Fl}
Let $d \geq 1$.

Then $K\mathrel{\mathop:}=(\Delta_{2d+2})^{\leq d}$ does not embed into $\R^{2d}$. 

More precisely, for every continuous map $f\!:\!||K|| \rightarrow \R^{2d}$ there exist two disjoint simplices $F_1, F_2 \in K$
such that $f(F_1) \cap f(F_2) \neq \emptyset$.
\end{thm}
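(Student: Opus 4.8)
The plan is to prove the Van Kampen–Flores theorem by the standard deleted-join / Borsuk–Ulam argument. The first step is to reformulate the statement topologically. A continuous map $f\!:\!||K|| \to \R^{2d}$ with no two disjoint simplices having intersecting images would yield a map from the \emph{deleted join} $K^{*2}_\Delta$ (the subcomplex of $K * K$ consisting of joins $F_1 \mathbin{\dot\cup} F_2$ of \emph{disjoint} simplices $F_1, F_2 \in K$) to $\R^{2d}$, and then, composing with the ``difference map'' $(x,y) \mapsto x - y$, a $\mathbb{Z}_2$-equivariant map $K^{*2}_\Delta \to S^{2d}$, where $\mathbb{Z}_2$ acts on the deleted join by swapping the two copies and antipodally on the sphere. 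By the Borsuk–Ulam theorem (in the form: there is no $\mathbb{Z}_2$-equivariant map $S^{n+1} \to S^n$), it suffices to show that $K^{*2}_\Delta$ contains, $\mathbb{Z}_2$-equivariantly, a copy of $S^{2d+1}$, or more precisely that its $\mathbb{Z}_2$-index is at least $2d+1$. Hence everything reduces to identifying the deleted join of $K = (\Delta_{2d+2})^{\leq d}$.

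The key combinatorial step is the computation $\bigl((\Delta_{2d+2})^{\leq d}\bigr)^{*2}_\Delta \cong (\Delta_{2d+3})^{*2}_{\Delta}$-type identity; concretely, one shows this deleted join is isomorphic to the $(2d+1)$-skeleton of the boundary of the $(2d+3)$-dimensional cross-polytope, equivalently to the $(2d+1)$-fold deleted join of the two-point complex, $(\Delta_0^{*(2d+3)})_\Delta$. The point is that $\Delta_{2d+2}$ has $2d+3$ vertices, the deleted join of a full simplex on $N$ vertices with itself is $\partial\Diamond_N$ (the $N$-dimensional cross-polytope boundary, a triangulated $S^{N-1}$), and passing to the $d$-skeleton on each side removes exactly the top $d+1$ ``levels'', leaving the $(2d+1)$-skeleton of $\partial\Diamond_{2d+3}$. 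I would verify this by the dimension bookkeeping: a simplex of the deleted join $F_1 \mathbin{\dot\cup} F_2$ with $F_i \in K$ disjoint has $\dim = |F_1| + |F_2| - 1 \le (d+1)+(d+1)-1 = 2d+1$, and every such simplex of dimension $\le 2d+1$ in $\partial\Diamond_{2d+3}$ does arise this way since its two ``halves'' each have at most $d+1$ vertices, hence lie in the $d$-skeleton. Thus the deleted join is exactly the $(2d+1)$-skeleton of $\partial\Diamond_{2d+3} \cong S^{2d+2}$.

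It then remains to observe that the $k$-skeleton of $\partial\Diamond_{N}$ (which is $(k-1)$-connected and carries the free $\mathbb{Z}_2$-action) has $\mathbb{Z}_2$-index equal to $k$, so here the index is $2d+1$; this is the content of the standard fact that $(\Delta_0^{*N})_\Delta$ is $\mathbb{Z}_2$-homotopy equivalent to $S^{N-1}$ and its skeleta behave as expected. Combining: a map $f$ as in the theorem would produce a $\mathbb{Z}_2$-map $S^{2d+1} \hookrightarrow K^{*2}_\Delta \to S^{2d}$ (after extending over the relevant skeleton, or directly restricting to the equivariant $S^{2d+1}$ sitting inside the deleted join), contradicting Borsuk–Ulam. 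This yields not just non-embeddability but the stronger ``intersecting disjoint faces'' conclusion, since that is precisely the obstruction the difference map detects.

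The main obstacle I anticipate is the honest identification of the deleted join of the skeleton — making the cross-polytope-skeleton description precise and checking that taking skeleta and taking deleted joins interact as claimed (in particular that no lower-dimensional simplices are lost and the $\mathbb{Z}_2$-action matches up) — together with citing the correct form of Borsuk–Ulam for skeleta of the cross-polytope; the topological input (Borsuk–Ulam, connectivity of skeleta) is standard and can be quoted, e.g., from \cite{Matousek.2003}, but the bookkeeping that pins down exactly which skeleton appears is where care is needed.
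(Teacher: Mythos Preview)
The paper does not prove this theorem; it only states it and refers to \cite{Matousek.2003}. Your deleted-join/Borsuk--Ulam strategy is the standard approach found there, so the outline is correct.

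The gap is exactly where you anticipated it. Your claim that $K^{*2}_\Delta$ equals the $(2d+1)$-skeleton of $\partial\Diamond_{2d+3}$ is false: the pair $F_1=\{1,\dots,d+2\}$, $F_2=\emptyset$ gives a $(d+1)$-simplex of $\partial\Diamond_{2d+3}$, well inside the $(2d+1)$-skeleton, yet $F_1\notin K$, so this simplex is absent from $K^{*2}_\Delta$. Your ``verification'' goes wrong at the assertion that a face of dimension $\le 2d+1$ has both halves of size $\le d+1$; the inequality $|F_1|+|F_2|\le 2d+2$ simply does not imply $\max(|F_1|,|F_2|)\le d+1$. Hence $K^{*2}_\Delta$ is only a proper subcomplex of that skeleton, and your index statement for skeleta does not apply as written. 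The correct identification is that $K^{*2}_\Delta$ is itself a simplicial $(2d+1)$-sphere: since for every $S\subseteq[2d+3]$ exactly one of $S$ and $[2d+3]\setminus S$ lies in $K$ (precisely the ``nice'' property the paper records for this complex), $K$ coincides with its own Alexander dual and the deleted join is the Bier sphere $\mathrm{Bier}_{2d+3}(K)$, a PL $(2d+1)$-sphere carrying the free swap action, which gives index $2d+1$ directly. Alternatively, the proof in \cite{Matousek.2003} sidesteps this computation entirely: extend $f$ continuously over all of $\Delta_{2d+2}$, append the coordinate $\mathrm{dist}(x,||K||)$, and apply the topological Radon theorem in $\R^{2d+1}$; the pigeonhole $|F_1|+|F_2|\le 2d+3$ together with equality of the distance coordinate then forces both coincident points back into $||K||$.
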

Of course there are $d$-dimensional complexes that can be embedded in some $\R^{\dR}$ with $\dR \leq 2d$.
A trivial example is the $d$-simplex which embeds into $\R^d$.
Later we will consider (the $d$-dimensional) boundaries of simplicial $(d+1)$-polytopes which lie in $\R^{d+1}$.

We will be especially interested in $d$-dimensional simplicial complexes that admit an embedding into $\R^{2d}$.
For $d=1$ these are planar simple graphs for which there are well-known characterizations
via forbidden subgraphs and minors (e.g., \cite[Section 6.2]{West.2001}):
\begin{thm}[Kuratowski's Theorem]\label{Kuratowski}
A simple graph is planar if and only if it does not contain a subdivision of $K_5$ or of $K_{3,3}$ as a subgraph.
\end{thm}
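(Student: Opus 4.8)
The plan is to prove the two implications separately; the forbidden-configuration condition being necessary is routine, while its sufficiency carries all the weight.

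\textbf{Necessity.} Planarity is inherited by subgraphs, and subdividing an edge (replacing it by an internally disjoint path, or conversely smoothing a degree-$2$ vertex) clearly does not affect planarity. So it suffices to show that $K_5$ and $K_{3,3}$ are themselves nonplanar. For this I would first record Euler's formula $v-e+f=2$ for connected plane graphs, proved by induction on $e$ (deleting a non-bridge edge merges two faces; a bridge or a leaf vertex is handled directly). From it one gets $e\le 3v-6$ for simple plane graphs with $v\ge 3$ (every face is bounded by $\ge 3$ edges, every edge lies on $\le 2$ faces), and $e\le 2v-4$ if moreover the graph is triangle-free. These fail for $K_5$ with $(v,e)=(5,10)$ and for the bipartite $K_{3,3}$ with $(v,e)=(6,9)$ respectively, so neither is planar, and hence no graph containing a subdivision of either is planar.

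\textbf{Sufficiency.} Suppose the statement fails and let $G$ be a counterexample with the fewest edges: $G$ contains no subdivision of $K_5$ or of $K_{3,3}$, yet $G$ is nonplanar. I would first show $G$ is $3$-connected. If $G$ is disconnected or has a cutvertex, one of its blocks is a smaller counterexample, contradiction. If $\{x,y\}$ separates $G$, write $G=G_1\cup G_2$ with $G_1\cap G_2=\{x,y\}$ and each $G_i$ a proper subgraph; add the edge $xy$ to each $G_i$. The resulting graphs are still free of $K_5$- and $K_{3,3}$-subdivisions — such a subdivision would have to use the new edge $xy$, and rerouting along an $x$--$y$ path through the other side would produce one in $G$ — so by minimality they are planar, and gluing their plane embeddings along the edge $xy$ drawn on the outer boundary embeds $G$, a contradiction. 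Hence $G$ is $3$-connected, and in particular $|V(G)|\ge 4$; if $|V(G)|=4$ then $G$ is planar ($K_4$ is), so $|V(G)|\ge 5$.

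Now I would invoke the lemma that every $3$-connected graph on at least five vertices has an edge $e=xy$ for which $G/e$ is again $3$-connected. Since $G/e$ has fewer edges, and since a $K_5$- or $K_{3,3}$-subdivision in $G/e$ lifts to one in $G$ — the contracted vertex has degree at most the larger of $\deg x,\deg y$, so the at most four branch paths through it can be distributed between $x$ and $y$, with a short case analysis ruling out the one awkward configuration — the graph $G/e$ is planar. Fix a plane embedding of the $3$-connected graph $G/e$; by the structure of $3$-connected plane graphs, the neighbours of the contracted vertex $z$ lie on a cycle $C$ bounding the face of $G/e-z$ into which $z$ was embedded. The neighbours of $x$ and of $y$ both appear on $C$; if they can be separated into two arcs of $C$, one reinserts the edge $xy$ and obtains a plane embedding of $G$, contradiction. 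Otherwise the two neighbour-sets interleave on $C$, or one endpoint (if $\deg=5$ in $K_5$-type situations) has three neighbours outside the other's arc, and in each such case one reads off directly a subdivision of $K_{3,3}$ (three interleaving neighbours) or of $K_5$ in $G$ — the final contradiction.

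\textbf{Main obstacle.} The connectivity reductions and Euler's-formula part are standard bookkeeping; the real difficulty is concentrated in the last paragraph, namely (i) that a Kuratowski subdivision in the contracted graph $G/e$ pulls back to one in $G$, and (ii) the case analysis showing that the cyclic positions of $N(x)$ and $N(y)$ on $C$ either permit reinsertion of $e$ or exhibit $K_5$ or $K_{3,3}$. A cleaner alternative I would seriously consider is to prove Wagner's form first — $G$ is planar iff $G$ has no $K_5$ and no $K_{3,3}$ \emph{minor} — for which the contraction argument is smoother, and then deduce Kuratowski's Theorem using that $K_{3,3}$ has maximum degree $3$ (so a $K_{3,3}$-minor already yields a $K_{3,3}$-subdivision) and that a $K_5$-minor forces either a $K_5$- or a $K_{3,3}$-subdivision.
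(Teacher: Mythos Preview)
The paper does not prove Kuratowski's Theorem; it merely states it as background, with a citation to a graph-theory textbook (West, Section~6.2), and uses it only as motivation for the higher-dimensional questions. So there is no ``paper's own proof'' to compare against.

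Your outline is the standard Thomassen-style argument and is essentially sound. The one place to be careful is the step ``a $K_5$- or $K_{3,3}$-subdivision in $G/e$ lifts to one in $G$'': this is not automatic, since the contracted vertex $z$ may be a branch vertex of degree~$4$ in a $K_5$-subdivision, and after uncontracting neither $x$ nor $y$ need inherit all four branch paths. You flag this as ``one awkward configuration'', and indeed in that case one extracts a $K_{3,3}$-subdivision in $G$ rather than a $K_5$-subdivision; but this deserves an explicit argument rather than a parenthetical. Your suggested alternative --- prove Wagner first and then deduce Kuratowski via the observation that a $K_{3,3}$-minor yields a $K_{3,3}$-subdivision (maximum degree~$3$) and a $K_5$-minor yields a $K_5$- or $K_{3,3}$-subdivision --- is cleaner precisely because it sidesteps this issue, and is how most modern treatments proceed.
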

\begin{thm}[Wagner's Theorem]\label{Wagner}
A simple graph is planar if and only if it does not have $K_5$ or $K_{3,3}$ as a minor.
\end{thm}
Here a \emph{minor} of a graph $G$ is a graph $H$ that can be obtained from $G$ by deleting and/or contracting edges of $G$. 
A \emph{subdivision} of $G$ is a graph $G'$ that is obtained by replacing edges of $G$ with pairwise internally-disjoint paths. With $K_5$ we denote the complete graph on $5$ vertices, and $K_{3,3}$ refers to the complete bipartite graph on two vertex sets of three elements each. (See Figure \ref{kfive-kthreethree})

\begin{figure}[htbp]
\centering
\includegraphics[scale=0.7]{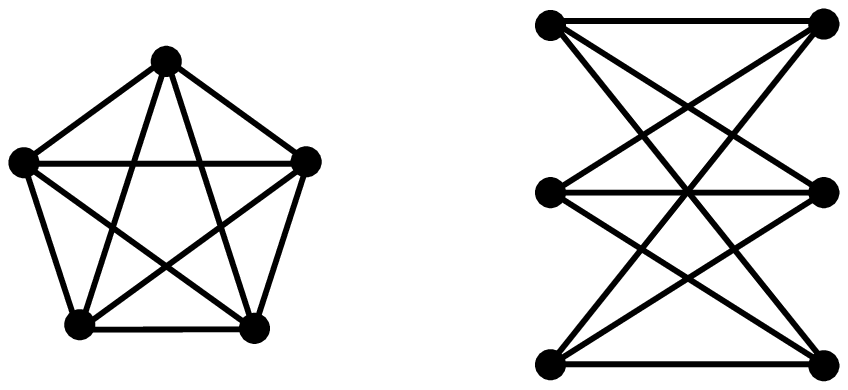}
\caption{The graphs $K_5$ and $K_{3,3}$.}\label{kfive-kthreethree}
\end{figure}

The graphs $K_5$ and $K_{3,3}$ are minimal non-planar graphs: All of their subgraphs are planar. Moreover, we just saw that these two graphs and all of their subdivisions are the only minimal non-planar graphs.
Any other graph that does not embed into the plane contains a (proper) subgraph that is homeomorphic to one of the two and is hence non-planar.

In the case $\dR=2d$ for $d \geq 2$ an example of a minimal non-embeddable complex is $(\Delta_{2d+2})^{\leq d}$, the complex appearing in the Theorem of van Kampen and Flores.
All of its subcomplexes embed into $\R^{2d}$ because $(\Delta_{2d+2})^{\leq d} \setminus F$, where $F$ is a maximal simplex, is the complex on $2d+3$ vertices consisting of all possible $d$-simplices except for one.
This is the $d$-skeleton of the boundary complex of $C_{2d+1}(2d+3)$, the cyclic $(2d+1)$-polytope on $2d+3$ vertices, as we will, e.g., see in the proof of Lemma \ref{zweiDeeMinor}.

Further classes of minimal non-embeddable complexes in higher dimensions were presented by Gr\"unbaum \cite{Gruenbaum.1969}, Zaks \cite{Zaks.1969b}, Sarkaria \cite{Sarkaria.1991} and Schild \cite{Schild.1993}.
Schild's class of complexes, presented in the theorem below, contains all of these examples.
\begin{Def}[nice complex]
 Let $K$ be a simplicial complex on at most $n$ vertices, with a vertex set identified with some $V\subseteq[n]$.
 We call $K$ \emph{nice} on $[n]$ if the following condition holds:
 \[
  F \subset [n] \Rightarrow F \in K \text{ or } [n] \setminus F \in K, \text{ but not both.}
 \]
\end{Def}
The complex from the Theorem of van Kampen and Flores (Theorem \ref{vK-Fl}) is one example of a nice complex: For $F\subset [2d+3]$ with $|F|\geq d+2$, the complement $[2d+3] \setminus F$ has $\leq d+1$ elements and is thus in $(\Delta_{2d+2})^{\leq d}$.
In Schild's paper the definition of a nice complex is slightly different from the definition we are using.
Where we use $[n]$, he instead puts $V(K)$, the vertex set of $K$ which is a subset of $[n]$ in our setting. The condition on a nice complex becomes
 \[
  F \subset V(K) \Rightarrow F \in K \text{ or } V(K) \setminus F \in K, \text{ but not both.}
 \]
The only case of a complex that is nice on $[n]$ in which $V(K) \neq [n]$ is the $(n-2)$-simplex which Schild in his paper adds as an exception by looking at the simplex with an additional ``virtual'' vertex.

Here's a short proof for this being the only case:
If there exists $x \in [n]$ such that $\{x\} \notin K$, then $[n]\setminus \{x\} \in K$ because $K$ is nice on $[n]$.
This means $[n]\setminus \{x\}$ is the only maximal simplex of $K$.

With this definition we can now present Schild's class of non-embeddable complexes:
\begin{thm} [{Schild~\cite[Theorem 3.1]{Schild.1993}}] \label{joins of nice complexes}\xdef\savedtheoremnumber{\thethm}
 Let $K_1, K_2,\ldots,K_s$ be simplicial complexes and suppose there are $n_1, n_2,\ldots,n_s \in \mathbb{N}$ such that $K_i$ is nice on $[n_i]$.
 Then $K = K_1*K_2*\ldots*K_s$ is not embeddable in $\R^{\dR}$ for ${\dR}=\left( \sum_{i=1}^s n_i \right)-s-2$.

 Except for the following two cases the complex $K$ is minimal non-embeddable,
 i.e., every proper subcomplex is linearly embeddable in $\R^{\dR}$:
 \begin{enumerate}
  \item All $K_i$ are simplices.
  \item $K=K_1$ is the boundary of a simplex with an additional vertex ($s=1$). 
 \end{enumerate}
\end{thm}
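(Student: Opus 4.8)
The plan is the deleted-join method. For a complex $K$ on a vertex set $V$, let $K^{*2}_{\Delta} := \{A \mathbin{\dot\cup} B \mid A, B \in K,\ A \cap B = \emptyset\}$ denote its $2$-fold deleted join, carrying the free $\mathbb{Z}_2$-action that exchanges the two copies of $V$. Two ingredients drive the argument. First, $K_i$ being nice on $[n_i]$ says precisely that $K_i$ coincides with its combinatorial Alexander dual $K_i^{\vee} = \{F \subseteq [n_i] \mid [n_i]\setminus F \notin K_i\}$; hence $(K_i)^{*2}_{\Delta}$ is the Bier sphere of $K_i$, which by Bier's theorem is a PL $(n_i - 2)$-sphere, and because $K_i = K_i^{\vee}$ the exchange involution acts freely on it. Second, the $2$-fold deleted join is multiplicative over joins: $(K_1 * \cdots * K_s)^{*2}_{\Delta} \cong (K_1)^{*2}_{\Delta} * \cdots * (K_s)^{*2}_{\Delta}$, $\mathbb{Z}_2$-equivariantly. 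Combining these, $K^{*2}_{\Delta}$ is a free $\mathbb{Z}_2$-space with $K^{*2}_{\Delta} \cong S^{n_1 - 2} * \cdots * S^{n_s - 2} = S^{(\sum n_i) - s - 1} = S^{\dR + 1}$. Now if $||K||$ embedded into $\R^{\dR}$, then disjoint faces of $K$ would have disjoint images, and the standard deleted-join criterion (cf.\ \cite{Matousek.2003}) would yield a $\mathbb{Z}_2$-map $||K^{*2}_{\Delta}|| \to S^{\dR}$, that is, a $\mathbb{Z}_2$-map $S^{\dR+1} \to S^{\dR}$ --- impossible by Borsuk--Ulam. (For $s = 1$ and $K_1 = (\Delta_{2d+2})^{\leq d}$ this recovers Theorem~\ref{vK-Fl}; the last step can equally be run with $\mathbb{Z}_2$-coindices, using only that each $(K_i)^{*2}_{\Delta}$ is free and $(n_i - 3)$-connected.)

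\textbf{Minimality.} It suffices to show that $L := K \setminus \{F\}$ embeds \emph{linearly} into $\R^{\dR}$ for every inclusion-maximal simplex $F = F_1 \mathbin{\dot\cup} \cdots \mathbin{\dot\cup} F_s$ of $K$, because every proper subcomplex of $K$ lies inside such an $L$. I would first observe that unless every $K_i$ is a simplex (equivalently, $K_i = \Delta_{n_i - 2}$, forced by niceness as soon as some $(n_i-1)$-set is a face), one has $\dim K = (\sum_i \dim K_i) + (s-1) \leq \dR$. Assuming this, the plan is to realize $L$ by putting the $\sum n_i = \dR + s + 2$ vertices on the moment curve in $\R^{\dR}$ in a carefully chosen cyclic order. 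By Lemma~\ref{momentcurve} any $\dR + 1$ of them are affinely independent, so such a realization can only fail if two disjoint faces $A = \bigsqcup_i A_i$, $B = \bigsqcup_i B_i$ of $L$ have $\conv A \cap \conv B \neq \emptyset$; niceness forces $|A_i| + |B_i| \leq n_i - 1$ for each $i$, hence $|A| + |B| \leq \dR + 2$, with equality only when $A_i \mathbin{\dot\cup} B_i = [n_i]\setminus\{x_i\}$ for all $i$. In the equality case $A \cup B$ is a set of $\dR + 2$ points on the moment curve, and its unique Radon partition is the one alternating along the curve, so $\conv A \cap \conv B = \emptyset$ unless $\{A, B\}$ is that alternating partition. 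Everything thus reduces to choosing the cyclic order so that every such alternating partition with both parts in $K$ has $F$ as one of its parts --- equivalently (this is just Gale's evenness condition), to exhibiting $L$ as a subcomplex of the Schlegel diagram of a suitable cyclic polytope $C_{\dR+1}(\dR + s + 2)$ with respect to a suitable facet, a linear realization in $\R^{\dR}$. Finally, the two exceptions are genuine: if every $K_i$ is a simplex then $K = \Delta_{\dR+1}$, whose $\dR$-skeleton $\partial\Delta_{\dR+1} \cong S^{\dR}$ is a proper subcomplex not embeddable into $\R^{\dR}$; and if $s = 1$ and $K_1$ is the boundary of an $m$-simplex with an extra isolated vertex (so $\dR = m - 1$), deleting that vertex leaves $\partial\Delta_m \cong S^{\dR}$, again not embeddable.

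\textbf{Where the difficulty lies.} The conceptual heart of the non-embeddability half is the Bier-sphere identification --- recognizing that ``nice'' is exactly the self-duality $K_i = K_i^{\vee}$, which is what promotes the Bier sphere from a PL sphere to a \emph{free} $\mathbb{Z}_2$-sphere of the right dimension; once that is in place, multiplicativity of the deleted join and Borsuk--Ulam finish the job mechanically. The genuinely technical part is minimality: one must produce, for each maximal face $F$, a single ordering of all $\sum n_i$ vertices for which the unique alternating Radon circuit on any $\dR + 2$ of them either contains a non-face of $K$ or has $F$ as one of its two blocks --- a Gale-evenness bookkeeping whose only obstructions turn out to be precisely the two exceptional configurations, in which some proper subcomplex already contains a full $S^{\dR}$. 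I expect this uniform choice of ordering, together with the case analysis that pins down exactly when it fails, to be the main obstacle.
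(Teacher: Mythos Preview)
The paper does not give its own proof of this theorem; it is quoted from Schild's paper and used as a black box. So there is no in-paper argument to compare against, and I can only assess your sketch on its own merits.

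Your non-embeddability argument is correct and is essentially the standard modern proof. The key observation --- that ``nice on $[n_i]$'' is precisely self-Alexander-duality $K_i = K_i^{\vee}$, so that $(K_i)^{*2}_{\Delta}$ is the Bier sphere $\mathrm{Bier}(K_i)\cong S^{n_i-2}$ with the swap acting freely --- is exactly right, and together with multiplicativity of the deleted join it yields $K^{*2}_{\Delta}\cong S^{r+1}$ as a free $\mathbb{Z}_2$-space. The contradiction with Borsuk--Ulam then follows from the usual embedding $\Rightarrow$ $\mathbb{Z}_2$-map criterion. This is the approach of Sarkaria and of \cite[Section~5.5]{Matousek.2003}, which the paper already cites for background; Schild's original 1993 argument predates the Bier-sphere language but is equivalent in spirit.

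Your minimality sketch is a reasonable outline but, as you yourself say, the decisive step is missing. The reduction is sound: $\dim K\le r$ outside the first exception, disjoint faces $A,B\in L$ satisfy $|A|+|B|\le r+2$ by niceness, and on the moment curve only the alternating Radon partition of an $(r+2)$-tuple can produce an intersection. What remains --- exhibiting, for each maximal face $F$, a single cyclic order of all $\sum n_i$ vertices such that every alternating partition of every $(r+2)$-subset either has a block outside $K$ or has $F$ as a block --- is a genuine combinatorial construction that you have not carried out. This is indeed the technical core of Schild's proof, and it does not follow from any soft argument; one really has to specify the ordering and verify Gale's evenness case by case. Your treatment of the two exceptional cases (where a full $S^r$ sits as a proper subcomplex) is correct.
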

%
%
%
Note that this implies the statement of the Theorem of van Kampen and Flores (Theorem \ref{vK-Fl}) that the complex $(\Delta_{2d+2})^{\leq d}$ does not embed into $\R^{2d}$, as $(\Delta_{2d+2})^{\leq d}$ is nice on $[2d+3]$.

Can these complexes play the same role as $K_5$ and $K_{3,3}$ for planar graphs, i.e., do they describe all complexes that are embeddable in the respective dimension?

Besides the case $d=1$, $\dR=2$ of planar graphs there are other classes of complexes that can be characterized by a finite set of forbidden subcomplexes or minors.
Halin and Jung in \cite{Halin.1964} give a characterization for $2$-dimensional complexes that embed in $\R^2$ by $7$ forbidden subcomplexes.
A sufficient condition for embeddability of $2$-complexes in $\R^3$ is given in \cite[Corollary 5.1]{Matousek.2008}.

For the case $\dR = 2d$ and $d \geq 2$ no such characterization is possible.
Zaks (\cite{Zaks.1969a} for $d>2$) and Ummel (\cite{Ummel.1973} for $d=2$) showed that there are infinitely many pairwise
non-homeomorphic $d$-complexes each of which does not embed into $\R^{2d}$
while all of their proper subcomplexes are (even linearly) embeddable.

There is also a concept of minors for these higher dimensional cases, inspired by graph minors.
In \cite{Nevo.2007} Nevo introduces minors for finite simplicial complexes, establishes a connection with embeddability
and also shows that this concept cannot yield a characterization of embeddable graphs: \label{minors}
He proves that for any $d \geq 2$ there exist infinitely many pairwise non-homeomorphic simplicial complexes of dimension $d$
that do not embed in the $2d$-sphere while all of their proper minors are embeddable.
We will introduce his concept of a minor in Section \ref{defMinor}.

This concludes our general explorations of the embeddability of simplicial complexes. We will now turn to the question of the size of embeddable complexes.

\section{Size of Embeddable Complexes}
\label{size}
This section introduces the main topic of this thesis: the maximal size of complexes that are embeddable into a certain Euclidean space.
More precisely, we will address the following question:

\begin{que}
Let $K$ be a simplicial complex of dimension $d$ on $n \geq r+1$ vertices that admits a (topological) embedding into $\R^{\dR}$ for $d \leq \dR \leq 2d$.
How many $d$-simplices (in terms of $n$, $d$ and $r$) can $K$ contain at most?
\end{que}

To be able to phrase the question shorter and more formally, we introduce the following notation for the numbers of simplices in a complex:
\begin{Def}[$f$-vector of a simplicial complex]
For a $d$-complex $K$ and $0\leq i\leq d$ we denote the number of $i$-simplices of $K$ by $f_i(K)$. The \emph{$f$-vector} of $K$ is the vector 
\[
f(K)=(f_0(K),f_1(K),\ldots,f_d(K)). 
\]
\end{Def}
With this we can now rephrase our question:
\begin{prob}\label{dieFrage}
For fixed $d,\dR \geq 1$ such that $d \leq \dR \leq 2d$ and $n \geq r+1$, what is (in terms of $n$, $d$ and $r$)
\[
 \max\left\{f_d(K) \;\middle|\; \dim(K)=d,\, |V(K)|=n,\, ||K|| \hookrightarrow \R^{\dR} \right\}?
\]
\end{prob}

In his blog \cite{Kalai.2008}, Gil Kalai proposes a conjecture, which he attributes to Sarkaria, concerning this question: 
\begin{conj}[{Kalai, Sarkaria \cite{Kalai.2008}}]\label{Conj1}
 Let $K$ be a $2$-dimensional simplicial complex with $f_2(K) \geq 4 f_1(K)$. Then $K$ cannot be embedded to $\R^4$.
\end{conj}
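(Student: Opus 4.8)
Since embeddability into $\R^4$ passes to subcomplexes, the conjecture would follow from the purely combinatorial claim that every $2$-complex $K$ with $f_2(K)\ge 4f_1(K)$ contains, as a subcomplex, some fixed finite $2$-complex that does not embed into $\R^4$. Theorem~\ref{joins of nice complexes} supplies a stock of such obstructions: the edgeless complex $\overline{K_3}$ on three vertices is nice on $[3]$, so, since not all three factors are simplices and $s=3$, the join $\overline{K_3}*\overline{K_3}*\overline{K_3}$ is a minimal non-embeddable $2$-complex for $\dR=(3+3+3)-3-2=4$; its $2$-simplices are exactly the triples meeting each of the three colour classes, i.e.\ the complete tripartite $3$-graph $K^{(3)}_{3,3,3}$ on $9$ vertices (and likewise $(\Delta_6)^{\le 2}$ of Theorem~\ref{vK-Fl} is nice on $[7]$ and carries the complete $3$-graph on $7$ vertices). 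The plan is then: fix such a list of finite non-$\R^4$-embeddable $2$-complexes, show that $f_2(K)\ge 4f_1(K)$ forces a copy of one of them inside $K$, and conclude by heredity. For the tripartite obstruction the last step is automatic: if all $27$ cross-class triples lie in $K$, then, $K$ being hereditary, so do all $27$ cross-class pairs, and the subcomplex of $K$ induced on those $9$ vertices then contains $\overline{K_3}*\overline{K_3}*\overline{K_3}$.

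The natural engine for the middle step is extremal hypergraph theory applied to the $3$-graph of triangles of $K$, which is precisely the route of Chapter~\ref{upperBounds}: Erd\H{o}s' bound $\ex(n;K^{(3)}_{3,3,3})=O(n^{3-1/9})$ shows that any $2$-complex with $\omega(n^{3-1/9})$ triangles contains $K^{(3)}_{3,3,3}$, and hence does not embed into $\R^4$. The trouble is that this is hopelessly far from the target. Since $f_1(K)\le\binom n2$, the hypothesis $f_2(K)\ge 4f_1(K)$ only places $K$ in the regime $f_2=\Theta(n^2)$, whereas every known finite obstruction --- $K^{(3)}_{3,3,3}$, $(\Delta_6)^{\le 2}$, and more generally every nice complex, each carrying $\Theta(m^3)$ triangles on $m$ vertices --- is produced by a counting argument only once $f_2$ reaches order $n^{3-o(1)}$. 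Worse, the examples $(\partial C_5(n))^{\le 2}$ of Chapter~\ref{lowerBounds}, which embed into $\R^4$ with $f_1=\binom n2$ and $f_2=2n^2-12n+20$ (so that $f_2/f_1\to 4$), show both that the constant $4$ is sharp and that a counting argument based on dense obstructions can never attain it: one would need an \emph{infinite} family of non-$\R^4$-embeddable $2$-complexes sparse enough to bite already at the quadratic threshold, and the remark in the abstract that even an optimal Tur\'an estimate caps this method at $O(n^{3-3/13})$ indicates that no such family is reachable this way.

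I therefore expect the actual proof to abandon forbidden subcomplexes and to argue topologically, in the spirit of Euler's formula $f_1\le 3f_0-6$ for planar graphs, which is the $d=1$ instance of the conjecture. One would take a PL embedding $||K|| \hookrightarrow \R^4 \subset S^4$ --- already forcing one either to first resolve the topological-versus-PL question for $2$-complexes in $\R^4$ (not covered by the codimension-$\ge 3$ result recalled on p.~\pageref{PLtop}) or to settle for the PL and linear statements, for which the cyclic examples remain tight --- pass to a regular neighbourhood $N$, a compact $4$-manifold assembled from $K$ with $f_0$ handles of index $0$, $f_1$ of index $1$ and $f_2$ of index $2$, so that $\chi(N)=f_0-f_1+f_2$ and $\partial N$ is a closed $3$-manifold with $\chi(\partial N)=0$; writing $S^4=N\cup_{\partial N}N'$ then gives $\chi(N')=2-(f_0-f_1+f_2)$. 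The hoped-for input is a constraint on the complementary $4$-manifold $N'$ --- forced by $N$ genuinely sitting in $S^4$ and $\partial N$ bounding on both sides, perhaps through the low-degree homology of $N'$ via Alexander duality with $K$ --- that turns $\chi(N')$ into an inequality $f_2\le 4f_1-(\text{lower-order term})$. Finding and proving that constraint is the crux, and is exactly where the problem stands open: no mechanism is presently known for converting embeddability into $\R^4$ into a bound on $f_2$ that is linear in $f_1$, and the gap between the provable $O(n^{3-1/9})$ and the conjectured $O(n^2)$ is wide open even in the weaker form ``$f_2(K)=O(n^2)$''.
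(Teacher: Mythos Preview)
Your proposal correctly recognises that this statement is a \emph{conjecture}: the paper does not prove it, and your conclusion that ``the problem stands open'' is accurate. Both of your lines of attack --- forbidding the tripartite obstruction $K^{(3)}_{3,3,3}$ via Erd\H{o}s' extremal bound, and a topological regular-neighbourhood/Euler-characteristic argument in the spirit of the planar $f_1\le 3f_0-6$ --- are reasonable programmes, and you are right that the first one is doomed to stall at order $n^{3-1/9}$ (indeed this is exactly the content of Proposition~\ref{theUpperBound} and Corollary~\ref{lowerBoundExEffDee}).

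What the paper actually does with Conjecture~\ref{Conj1}, however, is quite different from either of your approaches: it derives it \emph{conditionally} from the algebraic-shifting Conjecture~\ref{Conj2}. The mechanism is Lemma~\ref{Delta5N}, a short combinatorial argument showing that any $2$-dimensional \emph{shifted} subcomplex of $\Delta(5,n)$ satisfies $f_2<4f_1$. Granting Conjecture~\ref{Conj2}, an embeddable $K$ would have $\Delta(K)\subseteq\Delta(5,n)$, and since shifting preserves the $f$-vector one gets $f_2(K)=f_2(\Delta(K))<4f_1(\Delta(K))=4f_1(K)$. So the paper replaces the search for a direct topological obstruction by an appeal to a (still open) structural conjecture about shifted complexes; you might mention this reduction alongside your two approaches, since it is the paper's only substantive contribution towards Conjecture~\ref{Conj1}.
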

The type of embeddability, linear, piecewise-linear or topological, is not specified in this conjecture.
Note that, while we will be studying bounds depending on the number of vertices of a complex, here the number of maximal simplices is bounded in terms of the number of edges.
Kalai also remarks that there are similar conjectures for higher dimensions. At the end of this section, we will discuss Conjecture \ref{Conj1} in a little more detail.

In Section \ref{conjecture} we will look into a further conjecture by Kalai and Sarkaria on the shifted complexes of embeddable complexes \cite[Conjecture~27]{Kalai.2002} that yields a similar conjecture on this question.
Problem \ref{dieFrage} is also discussed briefly in \cite[Notes in Section 5.1]{Matousek.2003}. A treatment of the case $d=2$ and $r=3$, which we will study later in this section, can be found in \cite{Dey.1994}.
The case $r=d$ for linear embeddings is solved in \cite{Dey.1998}, where an upper bound for the case $r=d+1$ for linearly embeddable complexes is also presented.
\vspace{1em}

We will now first explore some details of the question. Then we answer it for low values of $d$ and $\dR$,  where it is directly tractable, and present Dey's and Pach's results on the cases $r=d$ and $r=d+1$ for linear complexes \cite{Dey.1998}, before we turn to a discussion of the already mentioned conjectures on Problem~\ref{dieFrage}.

As a first remark, observe that, because $f_i(K)\leq \ueber{n}{i+1}$ for all simplicial complexes $K$ on $n$ vertices and all $i$, the set 
\[
\left\{f_d(K) \;\middle|\; \dim(K)=d,\, |V(K)|=n,\, ||K|| \hookrightarrow \R^{\dR} \right\}
\]
is finite, and we can thus consider its maximum.

Furthermore, note that we can restrict our attention to pure $d$\mbox{-}dimen\-sional complexes:
Embeddability of a complex $K$ implies embeddability for all subcomplexes $K'$ of $K$. Thus, for any embeddable complex $K$ with the maximum number of $d$-simplices, the pure complex 
\[
 \left\{F \subseteq F'\;\middle|\; F'\in K, \dim(F') = d \right\} 
\]
also yields an example that has the same number of $d$-simplices.

If a simplicial complex embeds into $\R^{\dR}$, it can also be embedded into $S^{\dR}$ using inverse stereographic projection.
For $d<\dR$ the converse is also true:
An embedding of a $d$-dimensional complex $K$ into some $S^{\dR}$ with $d<\dR$ cannot be surjective
(otherwise it would be a homeomorphism between $K$ and $S^{\dR}$).
So via the stereographic projection a complex that admits such an embedding can also be embedded into $\R^{\dR}$.
This means we can (and will sometimes) consider embeddability into $S^{\dR}$ instead of $\R^{\dR}$ if $d \neq \dR$.

Why do we restrict the question to $\dR$ with $d \leq \dR \leq 2d$?
We saw that any $d$\mbox{-}complex is embeddable into $\R^{2d+1}$.
The complete $d$-complex on $n$ vertices (with all possible simplices) thus shows that for any dimension $\dR \geq 2d+1$ the answer to our question is $\ueber{n}{d+1}$, the maximal possible one.
For $n \geq 2d+3$ the complete complex does not embed into $\R^{2d}$ by the theorem of van Kampen and Flores. Because a $d$-complex cannot be embeddded into $\R^{d-1}$, the interesting range for $\dR$ hence lies between $d$ and $2d$.

It is moreover sufficient to ask for $n$ to be at least $r+1$, as the complete $d$-complex on $n$ vertices is the $d$-skeleton of the $(n-1)$-simplex, which clearly embeds into $\R^{\dR}$ for $n \leq r+1$. 

As mentioned in Section \ref{PLtop}, in some cases the existence of a topological embedding implies the existence of a PL embedding: For $d > 2$ this range is
$d+3 \leq \dR \leq 2d$, for $d=2$ it is the case $\dR = 3$.
In all other cases, asking for the complex to embed piecewise linearly (or linearly) and not only topologically might change the outcome of the question.
\vspace{1em}

This concludes our discussion of details concerning Problem \ref{dieFrage}.
After considering directly tractable cases for $d=1$, $2$, $3$ in this section, we will set out to find lower and upper bounds in higher dimensions, where the question seems to become more complicated.
We present lower bounds for all instances of the problem in Chapter \ref{lowerBounds} and upper bounds for the cases $r=2d$ in Chapter \ref{upperBounds}.
As linear or PL embeddability implies topological embeddability, the upper bounds naturally also apply for the variations of the question mentioned above.
The lower bounds do likewise because they come from linearly embeddable examples.



Let us now study the directly tractable cases. As examples we will use the cyclic polytopes, even though we only introduce polytopes in general and cyclic polytopes in particular in Sections \ref{Polytopes} and \ref{CyclicPolytopes}. The face numbers of the cyclic polytopes we are using here will be calculated in Section \ref{simplicialpolytopes}.

\subsection*{Graphs}
Simplicial complexes of dimension $d = 1$ are simple graphs.
As all graphs embed into $\R^3$, we will consider $\dR$ as either $1$ or $2$. In both cases there is no difference between topological, PL and linear embeddability.

\begin{lem}
 A simple graph on $n$ vertices that embeds into $\R$ has at most $n-1$ edges.
\end{lem}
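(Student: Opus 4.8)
The statement is that a simple graph $G$ on $n$ vertices embedding into $\R$ has at most $n-1$ edges. The plan is to argue that such a graph must be a forest, i.e.\ contains no cycle, which immediately gives $f_1(G) \leq n-1$. The key observation is that $\R$ is one-dimensional, so an embedding $f\!:\!||G|| \to \R$ imposes severe linear-order constraints on where vertices and edges can go.

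\textbf{Main step.} First I would reduce to showing $G$ is acyclic. If $G$ were a forest, then by induction on the number of components (or on $n$) the number of edges is exactly $n - c$ where $c$ is the number of connected components, hence at most $n-1$; this is the standard easy fact and I would only sketch it. So the heart of the argument is: the $1$-sphere $S^1$ (the polyhedron of the boundary of a $2$-simplex, i.e.\ a triangle, or more generally any cycle $C_k$) does not embed into $\R$. I would prove this by a connectedness/compactness argument: suppose $f\!:\!||C_k|| \to \R$ is an embedding. Since $||C_k||$ is compact and connected, $f(||C_k||)$ is a compact connected subset of $\R$, hence a closed interval $[a,b]$ with $a < b$ (it cannot be a point since $f$ is injective and $||C_k||$ has more than one point). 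Pick the point $p \in ||C_k||$ with $f(p) = a$. Then $||C_k|| \setminus \{p\}$ is still connected (removing one point from a circle leaves an arc, which is connected), but $f(||C_k|| \setminus \{p\}) = (a,b]$, and applying $f^{-1}$, which is a homeomorphism, $||C_k||\setminus\{p\}$ is homeomorphic to $(a,b]$. This is not yet a contradiction by itself, so instead I would pick an \emph{interior} point $q$ of the circle — one whose image $f(q)$ lies in the open interval $(a,b)$, which exists since $f$ is injective and hits at least three values. Removing $q$ disconnects $(a,b)$-worth of the image but leaves $||C_k|| \setminus \{q\}$ connected (again an arc), contradicting that $f$ restricted to this set is a homeomorphism onto the disconnected set $[a,b] \setminus \{f(q)\}$. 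Hence no cycle embeds into $\R$, so every subgraph of $G$ that is a cycle is forbidden, meaning $G$ is a forest.

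\textbf{Alternative and the real obstacle.} Actually the cleanest route avoids even talking about which subgraph embeds: since embeddability passes to subcomplexes (stated in the excerpt), it suffices to show the triangle graph $C_3$ — equivalently any single cycle — does not embed into $\R$, and then conclude $G$ has no cycle. The genuinely delicate point is the topological lemma that a circle does not embed in the line; the slick phrasing is that removing any point from $\R$ disconnects it into two pieces, whereas $S^1$ stays connected after removing a point, and an embedding must respect this. I expect this ``no-circle-in-the-line'' step to be the only real content; everything else (forests have $\leq n-1$ edges, embeddability is hereditary) is routine bookkeeping. One should also handle the trivial edge cases $n = 1$ (zero edges $\leq 0$) and disconnected $G$ uniformly via the forest count $|E| = n - c \leq n-1$.
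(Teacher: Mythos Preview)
Your proof is correct. The paper's proof is a single line: ``If a graph is embeddable into $\R$, it is a disjoint union of paths.'' This asserts a stronger structural fact than you prove (maximum degree $\leq 2$ in addition to acyclicity), but without justification; you instead establish the weaker claim that the graph is a forest, which already suffices for the edge bound $f_1 \leq n-1$. Your route has the advantage of actually supplying the topological content (the standard cut-point argument that $S^1$ does not embed in $\R$), whereas the paper's stronger assertion would additionally require showing that the $3$-star $K_{1,3}$ does not embed in $\R$ either --- true, but not needed for the lemma as stated.
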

\begin{proof}
If a graph is embeddable into $\R$, it is a disjoint union of paths.
\end{proof}

The path of length $n-1$ proves this bound for $r=1$ to be tight.
For $r=2$ we consider planar graphs, for which the maximum number of edges is known:

\begin{lem}[{e.g., \cite[Theorem 6.1.23]{West.2001}}]
\label{sizePlanarGraphs}
 A simple planar graph on $n \geq 3$ vertices has at most $3n-6$ edges.
\end{lem}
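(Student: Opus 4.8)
The standard approach is via Euler's formula. First I would recall that for a connected planar graph drawn in the plane with $n$ vertices, $e$ edges and $f$ faces, one has $n - e + f = 2$. I would start by reducing to the connected case: if the graph is disconnected, adding edges between components only increases the edge count while preserving planarity, so a graph with the maximum number of edges on $n$ vertices is connected (and in fact we may assume it is a maximal planar graph, i.e. a triangulation). For $n \geq 3$ we may also assume the graph is simple with at least one edge, so every face of a plane embedding is bounded by a closed walk of length at least $3$.

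The key step is the face-counting inequality. Each face is incident to at least $3$ edges (counting an edge twice if it borders the same face on both sides), and each edge is incident to exactly two faces, so $2e \geq 3f$, i.e. $f \leq \frac{2}{3}e$. Substituting into Euler's formula gives $2 = n - e + f \leq n - e + \frac{2}{3}e = n - \frac{1}{3}e$, which rearranges to $e \leq 3n - 6$.

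The main obstacle — really the only subtle point — is justifying the inequality $2e \geq 3f$ rigorously, which requires that the graph be simple and have enough edges so that no face is bounded by a walk of length $1$ or $2$. For $n \geq 3$, if the graph is connected and simple with at least two edges, every face boundary has length $\geq 3$; the small cases (a single edge, or $n = 3$ with fewer edges) are checked directly and satisfy $e \leq 3n-6$ trivially since $3n - 6 \geq 3$. A secondary point worth stating is that Euler's formula itself is being assumed (it is a standard fact, provable by induction on $e$ via deleting an edge on a cycle or contracting a pendant edge), since the excerpt treats this lemma as well-known and cites \cite{West.2001}. One can also note the bound is tight: any maximal planar graph (triangulation of the sphere) on $n \geq 3$ vertices achieves equality, as there $2e = 3f$ exactly.
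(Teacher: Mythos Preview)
Your proof is correct and follows essentially the same approach as the paper: reduce to the connected case, apply Euler's formula $n-e+f=2$, use the double-counting inequality $2e\geq 3f$ from the fact that each face is bounded by at least three edges, and handle the degenerate small cases separately. The paper's write-up is slightly terser (it multiplies Euler's formula by $3$ and substitutes $3f\leq 2e$ directly, and singles out only the path on three vertices as the exceptional case), but the argument is the same.
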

\begin{proof}
 Let $G$ be a simple planar graph with $n \geq 3$ vertices and $e$ edges.

 Euler's formula states that $n-e+f=2$ for any crossing-free drawing of $G$, where $f$ is the number of faces.
 (A face is a maximal region of the plane which contains no point used in the drawing.)

 If $G$ is connected and not a path of length 2, every face has at least $3$ edges on its boundary.
 Since every edge lies in the boundary of exactly two faces we get 
\[
 2 e = \sum_{i=1}^{f}F_i \geq 3f
\]
 where $F_i$ is the number of edges in the boundary of the $i$-th face.
 With Euler's formula this yields 
\[
 3n-3e+2e = 3n-e \geq 6.
\]

 If $G$ is not connected, one can simply add edges to get a connected graph with more edges but the same number of vertices.
 For the path of length 2 the statement is obviously true.
\end{proof}
Graphs attaining this bound are maximal planar in the sense that adding an edge will make them non-planar.
All of their faces (even the outer one) are triangles.

\subsection*{2-Complexes}
For $d=2$ we consider simplicial complexes that are collections of triangles and look at $r$ in the range $2 \leq r \leq 4$.

Let us first consider $2$-complexes that admit an embedding into $\R^2$. The $1$\mbox{-}skeleton of such a complex is a planar graph. We can use our knowledge on planar graphs to attain a bound in this case:
\begin{lem}
  A $2$-dimensional simplicial complex on $n \geq 3$ vertices that embeds into $\R^2$ can have at most $2n-5$ simplices of dimension $2$.
\end{lem}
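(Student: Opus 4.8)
The plan is to bound $f_2(K)$ by combining the planarity of the $1$-skeleton $G = K^{\leq 1}$ with a double-counting argument relating triangles of $K$ to faces of a planar drawing of $G$. First I would fix an embedding of $\|K\|$ into $\R^2$; restricting it to the $1$-skeleton gives a crossing-free drawing of the simple planar graph $G$ on $n$ vertices. By Lemma~\ref{sizePlanarGraphs} we already know $f_1(K) = e(G) \leq 3n-6$, but we need to do better by exploiting the presence of the $2$-simplices. The key observation is that each $2$-simplex of $K$ is mapped homeomorphically to a closed triangular disk in $\R^2$ whose interior is disjoint from $\|G\|$ (since $K$ is a complex, distinct simplices meet only in common faces, and a $2$-simplex meets $G$ only in its own boundary edges). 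Hence the interior of each $2$-simplex lies entirely inside a single bounded face of the drawing of $G$, and the images of the interiors of distinct $2$-simplices are pairwise disjoint.

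Next I would argue that, in fact, each bounded face of the drawing of $G$ that is a triangle (a $3$-cycle bounding an empty region) can ``absorb'' at most one $2$-simplex of $K$ in this way, and each non-triangular bounded face can absorb none — more precisely, a $2$-simplex whose interior sits inside a face $F$ of $G$ must have its three boundary edges on the boundary of $F$, forcing $F$ to be bounded by exactly that $3$-cycle. So $f_2(K)$ is at most the number of triangular bounded faces of the drawing, which is at most the total number of bounded faces, i.e. $f - 1$ where $f$ is the face count in Euler's formula $n - e + f = 2$. This gives $f_2(K) \leq f - 1 = e - n + 1 \leq (3n-6) - n + 1 = 2n - 5$, using the planar edge bound from Lemma~\ref{sizePlanarGraphs}. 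One has to handle the small/degenerate cases (disconnected $G$, or $G$ a path) separately, exactly as in the proof of Lemma~\ref{sizePlanarGraphs}, but these only make the bound easier.

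The main obstacle is making the ``at most one $2$-simplex per face'' step fully rigorous in the purely topological setting: the embedding of $\|K\|$ need not be PL, so I must be careful that the image of a $2$-simplex is a genuine closed disk and that ``interior lies in one face'' is justified by connectedness of the interior together with disjointness from $\|G\|$, rather than by any linear-geometry intuition. Once that is pinned down, the combinatorics — each absorbing face is a $3$-cycle, distinct $2$-simplices use distinct faces — is routine, and the arithmetic via Euler's formula is immediate. A cleaner alternative, which I would mention, is to note that the $2$-simplices together with $G$ form a subcomplex of a triangulation of a subsurface of $S^2$, so one could instead invoke Euler's formula directly for the $2$-complex and bound $f_2$ in terms of $f_1$ and $f_0$; but the face-counting argument above is the most self-contained.
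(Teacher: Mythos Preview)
Your proposal is correct and follows essentially the same approach as the paper: restrict the embedding to the $1$-skeleton $G$, observe that each $2$-simplex of $K$ occupies exactly one bounded face of the resulting plane drawing, and then combine Euler's formula with the planar edge bound $e\leq 3n-6$ to get $f_2(K)\leq f-1\leq 2n-5$. The paper's version is considerably terser---it simply asserts that ``the $2$-simplices of $K$ are among the inner faces of $G$'' and then cites the face bound $2n-4$---whereas you spell out carefully why the interior of a $2$-simplex is an entire face (not merely contained in one) and why distinct $2$-simplices use distinct faces; your caution about the non-PL setting is appropriate and the Jordan-curve style justification you sketch is exactly what is needed to make that one-line claim rigorous.
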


\begin{proof}
Let $K$ be a $2$-dimensional simplicial complex that embeds into the plane. Let $G$ be the $1$-skeleton of $K$.
In the drawing of $G$ given by the embedding, the $2$-simplices of $K$ are among the inner faces of $G$.
Euler's formula together with the upper bound for the number of edges gives a maximum number of $2n-4$ (inner and outer) faces for a planar graph on $n$ vertices.
Hence, we get $f_2(K) \leq f(G) \leq 2n-5$.
\end{proof}

This bound is realized by any triangulation of the 2-simplex that has $n$ vertices and no additional vertices on the boundary. As there are complexes of this kind that embed linearly into the plane, in this case asking for linear or PL embeddings doesn't change the outcome of the question.

The following elementary treatment of the case $d=2$ and $\dR = 3$ can be found in \cite{Dey.1994}.
As remarked earlier, in this case the class of PL embeddable complexes equals the class of topologically embeddable ones.
\begin{lem}[{Dey, Edelsbrunner~\cite[(2.3)]{Dey.1994}}]\label{zweiDrei}
 A $2$-dimensional simplicial complex on $n\geq4$ vertices that embeds piecewise linearly into $\R^3$ can have at most $n(n-3)$
 $2$-simplices.
\end{lem}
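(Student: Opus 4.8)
The plan is to bound $f_2(K)$ by counting incidences between $2$-simplices and edges, using the fact that in a PL (hence, after subdividing, linear) embedding into $\R^3$ an edge cannot be contained in too many triangles of an embedded complex. First I would reduce to the linear case: since a PL embedding of $K$ is a linear embedding of some subdivision $K'$, and subdividing only increases the number of vertices, it suffices (for an upper bound of the stated shape) to treat a linearly embedded complex on $n$ vertices; I would then work directly with a fixed linear embedding of $K$ into $\R^3$.

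The key combinatorial input is a local bound: for each edge $e=\{u,v\}$ of $K$, the $2$-simplices of $K$ containing $e$ appear, in the linear embedding, as triangles glued along the common segment $\conv(f(e))$, and around this segment they are linearly ordered (cyclically, via the ``link'' of $e$, which embeds in a small circle transverse to $e$). Two such triangles may only meet along $e$, so their third vertices must be distinct; hence $e$ lies in at most $n-2$ triangles — but one gets a better bound by a parity/ordering argument, namely that an edge incident to a triangle is incident to at least two triangles is false in general, so instead I would argue that each triangle uses two of its three edges ``internally''. More precisely: double-count pairs $(F,e)$ with $e\subset F$, $\dim F=2$, $\dim e=1$. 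This gives $3f_2(K)=\sum_{e}(\text{number of triangles on }e)$. Bounding each term by $n-2$ yields $f_2(K)\le \binom{n}{2}(n-2)/3$, which is far too weak, so the crux is to get the sharp per-edge estimate that leads to $n(n-3)$.

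The sharp estimate should come from the structure of the link of a vertex. For a vertex $w$ of $K$, the link $\operatorname{lk}(w)$ is a graph on the $\le n-1$ other vertices, and the star of $w$ embeds linearly into $\R^3$; projecting from $w$ (or intersecting the star with a small sphere around $f(w)$) realizes $\operatorname{lk}(w)$ as a graph embedded in $S^2$, i.e.\ a planar graph on at most $n-1$ vertices. By Lemma \ref{sizePlanarGraphs} this link has at most $3(n-1)-6=3n-9$ edges. Now double-count pairs $(w,F)$ where $w$ is a vertex of the $2$-simplex $F$: each $F$ contributes $3$ such pairs and contributes one edge to $\operatorname{lk}(w)$ for each of its three vertices $w$, so $3f_2(K)=\sum_{w} f_1(\operatorname{lk}(w)) \le n(3n-9)$, giving $f_2(K)\le n(n-3)$. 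The main obstacle, and the place I expect the real work to be, is justifying that $\operatorname{lk}(w)$ really embeds into $S^2$ as a \emph{simple} planar graph when $K$ is only PL-embedded in $\R^3$ — one must verify that the local picture around $f(w)$, after passing to the subdivision, is a cone over an embedded planar graph, and that distinct edges/triangles of the star give distinct edges/faces in the link (no degeneracies, no two triangles of the star overlapping on a $2$-dimensional set). Once that local planarity is established, the rest is the two double-counting identities above plus Lemma \ref{sizePlanarGraphs}.
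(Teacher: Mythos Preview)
Your third paragraph is exactly the paper's proof: for each vertex $w$, the small-sphere/link argument shows that the triangles through $w$ give a planar graph on at most $n-1$ vertices, hence at most $3(n-1)-6$ edges; summing over $w$ and dividing by $3$ gives $f_2(K)\le n(n-3)$.

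The one genuine gap is your first paragraph. The reduction ``pass to a subdivision $K'$, which is linearly embedded'' does not let you assume $K$ itself is linearly embedded on $n$ vertices: $K'$ has $n'>n$ vertices and more $2$-simplices, so a bound $f_2(K')\le n'(n'-3)$ says nothing about $f_2(K)$ in terms of $n$. You seem to sense this, since at the end you flag the PL case as ``the place I expect the real work to be''. The paper avoids the reduction entirely and argues directly for a PL embedding: around $f(w)$ the map is simplicial on some subdivision, so for a small enough sphere the intersection with the (images of the) triangles and edges of $K$ incident to $w$ is an embedded graph on $S^2$; the vertices of that graph are the traces of the edges through $w$ (at most $n-1$ of them) and its edges are the traces of the triangles. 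That is all the ``local planarity'' one needs, and then your double count goes through verbatim. So drop the reduction step and run the sphere/link argument for the PL embedding itself.
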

\begin{proof}
 Let $K$ be a $2$-dimensional simplicial complex on $n$ vertices with a piecewise linear embedding $f\!:\!||K|| \rightarrow \R^3$.

 For a vertex $v$ of $K$ we can estimate the number of adjacent triangles by considering a sphere centered at $f(v)$.
 If its radius is small enough, the intersection of the sphere with the triangles (and edges) adjacent to $v$ forms a planar graph.
 
 This graph cannot have more than $n-1\geq3$ vertices.
 Thus, by Theorem~\ref{sizePlanarGraphs} it has at most $3(n-1)-6$ edges.

 This means that every vertex in $K$ is adjacent to at most $3(n-1)-6$ triangles. 
 Taking into account that we counted every triangle three times, we thus can see that $K$ has at most $\frac{1}{3} n (3(n-1)-6) = n(n-3)$ triangles.
\end{proof}
The $2$-skeleton of the cyclic $4$-polytopes show that this bound is tight. Again, as these are linearly embeddable complexes, in this case the answer to our question stays the same when asking for linear or PL embeddings.

The case $d=2$ and $\dR=4$ is the smallest for which no direct approach seems to be known.
What can we say asymptotically about the maximal number of triangles among all $2$-complexes that embed in $\R^4$?
The complete complex yields the trivial upper bound of $O(n^3)$ in this case.

In Chapter \ref{lowerBounds} we will see that the $2$-skeleton of the cyclic $5$-polytope gives a lower bound of the  order $n^2$.
In Chapter \ref{upperBounds} we present a slightly better upper bound which is strictly less than~$n^3$, but which doesn't close the gap to the lower bound.

\subsection*{3-Complexes}
We will only present a direct treatment of the case of $3$-complexes that admit a PL-embedding into $\R^3$.
The idea that yielded an upper bound for $2$-complexes in $\R^3$ can be used to deal with this case:

\begin{lem}
 A $3$-dimensional simplicial complex on $n\geq4$ vertices that embeds piecewise linearly into $\R^3$ can have at most $n(n-3)/2 - 1$ simplices of dimension~$3$.
\end{lem}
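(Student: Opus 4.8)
The strategy mirrors the argument for Lemma~\ref{zweiDrei} (Dey--Edelsbrunner), adapting the link/vertex-figure idea from dimension~$2$ in $\R^3$ to dimension~$3$ in $\R^3$. Let $K$ be a pure $3$-complex on $n$ vertices with a PL embedding $f\colon ||K|| \to \R^3$; we want to bound $f_3(K)$. Fix a vertex $v$ and consider a small sphere $S^2$ centred at $f(v)$. For a radius small enough, the intersection of $S^2$ with the simplices of $K$ incident to $v$ is (after refining along the subdivision certifying PL-ness) the geometric realization of a $2$-complex $L_v$ embedded in $S^2 \cong \R^2 \cup \{\infty\}$, i.e.\ into $\R^2$ (the embedding into $S^2$ is not surjective since $L_v$ is $2$-dimensional, exactly as remarked in Section~\ref{size}). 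The $3$-simplices of $K$ containing $v$ correspond bijectively to the $2$-simplices of $L_v$, and the vertices of $L_v$ correspond to the (at most $n-1$) neighbours of $v$ in $K$.

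First I would invoke the $\R^2$-bound for $2$-complexes established just above: a $2$-complex on $m \geq 3$ vertices embeddable in $\R^2$ has at most $2m-5$ triangles. Applying this with $m \le n-1$ gives that each vertex $v$ lies in at most $2(n-1)-5 = 2n-7$ tetrahedra of $K$. Summing over all $n$ vertices and dividing by $4$ (each $3$-simplex has four vertices) yields
\[
 f_3(K) \;\le\; \frac{n(2n-7)}{4}.
\]
This is of the right order $\tfrac12 n^2$ but is weaker than the claimed $n(n-3)/2 - 1 = \tfrac12 n^2 - \tfrac32 n - 1$; indeed $\tfrac{n(2n-7)}{4} = \tfrac12 n^2 - \tfrac74 n$, which beats the target for large $n$. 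So the crude double count already overshoots, and the main work is to recover the extra savings. The natural fix is to be more careful about the link complex: $L_v$ is not just any embeddable $2$-complex but is itself \emph{pure} (since $K$ is pure $3$-dimensional, every edge $vw$ of $K$ lies in a $3$-simplex, so every vertex of $L_v$ lies in a triangle of $L_v$) and, more importantly, its $1$-skeleton together with the triangle structure is constrained because $L_v$ is a piece of the link of a genuine embedded complex. I would try to argue that $L_v$ in fact embeds so that it uses at most, say, $n-2$ of the possible $n-1$ neighbour-vertices, or that the relevant planar graph is simple and triangulating a disk, pushing the per-vertex bound down to $2(n-1)-6 = 2n-8$; then $f_3(K) \le n(n-4)/2 < n(n-3)/2 - 1$ for $n$ large, and the small cases $n = 4,5,6$ can be checked by hand.

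The step I expect to be the real obstacle is pinning down exactly which combinatorial constraint on $L_v$ gives the clean bound $n(n-3)/2 - 1$, and in particular handling the ``$-1$'' and the boundary/connectivity subtleties: the link $L_v$ may be disconnected or may fail to be a full triangulated sphere, and a single vertex of $K$ achieving the maximum in its link need not be compatible with \emph{every} vertex simultaneously achieving it — so the $-1$ presumably comes from a global parity or Euler-characteristic argument (e.g.\ counting incidences $(v,\sigma)$ with $v \in \sigma \in K$, a tetrahedron, against edges, and applying Euler's formula to the embedded $2$-skeleton of $K$ in $\R^3$) rather than purely vertex-by-vertex. I would therefore also keep in reserve the alternative of bounding $f_3(K)$ via $f_2(K)$ using Lemma~\ref{zweiDrei} (each $3$-simplex has four $2$-faces, each $2$-simplex of a PL-embedded complex in $\R^3$ lies in at most two tetrahedra), which gives $f_3(K) \le \tfrac12 f_2(K) \le \tfrac12 n(n-3)$ — again the right order, again slightly too weak by the constant, and again needing one extra structural observation (some triangle must lie on the ``outside'' and hence in at most one tetrahedron) to extract the final $-1$.
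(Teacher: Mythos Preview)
Your overall strategy matches the paper's, but there is a genuine error in the key step. You claim that the link complex $L_v$ embeds into $\R^2$ because ``the embedding into $S^2$ is not surjective since $L_v$ is $2$-dimensional, exactly as remarked in Section~\ref{size}.'' That remark, however, applies only when $d<\dR$; here $L_v$ is a $2$-complex sitting in $S^2$, so $d=\dR=2$ and the embedding \emph{can} be surjective. Concretely, if $v$ is an \emph{interior} vertex of the PL embedding (a neighbourhood of $f(v)$ is entirely covered by tetrahedra of $K$), then $L_v$ is a full triangulation of $S^2$, which on $m$ vertices has exactly $2m-4$ triangles, not at most $2m-5$. So the per-vertex bound $2(n-1)-5$ is simply false in general; the correct uniform bound is $2(n-1)-4=2n-6$.

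This should have been visible from your own arithmetic: for $n>4$ one has $\tfrac{n(2n-7)}{4} < \tfrac{n(n-3)}{2}-1$, so your bound is \emph{stronger} than the target, not weaker --- and since the target is attained by $\mathcal{C}(\partial C_4(n))\setminus F$, a strictly stronger bound cannot hold. The paper's argument is exactly what you were groping towards at the end: use the face bound $2(n-1)-4$ for the planar graph on $S^2$ at every vertex, giving $f_3(K)\le n(n-3)/2$, and then extract the ``$-1$'' by observing that at least four vertices of $K$ must be \emph{outer} (some neighbourhood of $f(v)$ meets $\R^3\setminus f(||K||)$), and for each of those the link misses at least one face of its planar graph, i.e.\ contributes at most $2(n-1)-5$. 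Summing $(n-4)(2n-6)+4(2n-7)$ and dividing by $4$ gives $n(n-3)/2-1$.
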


\begin{proof}
 Let $K$ be a $3$-complex with a piecewise linear embedding $f\!:\!||K|| \rightarrow \R^3$, and $v$ a vertex of $K$.
We can again consider a small enough sphere around the vertex $v$.
As above, the $2$-skeleton of $K$ yields a planar graph $G$ on the sphere.
Every tetrahedron of $K$ containing $v$ corresponds to a face of $G$.
Thus, $f_3(K) \leq n(2(n-1)-4)/4 = n(n-3)/2$.

To get a more precise bound, we can use the fact that there are at least four outer vertices of $K$,
i.e., vertices $v$ for which any neighborhood of $f(v)$ contains points not in $f(||K||)$.
For these vertices at least one face of the planar graph does not correspond to a tetrahedron of $K$.
So what we get is: 
\[
 f_3(K) \leq (n-4)(2(n-1)-4) + 4(2(n-1)-5) = n(n-3)/2 - 1.
\]
\end{proof}

This bound is attained by the boundary complex of the cyclic 4-polytope minus a facet, which admits a linear embedding into $\R^3$. For $d=\dR=3$ Problem~\ref{dieFrage} thus has the same answer for linear and for PL embeddable complexes.

\vspace{1em}
\noindent This concludes the discussion of the easily approachable cases. All results are summarized in Table \ref{Table}.
\begin{table}
\centering
$
\newcommand\T{\rule{0pt}{2.6ex}}
\newcommand\B{\rule[-1.4ex]{0pt}{0pt}}
\begin{array}{c||c|c|c|c}
 & d=1 \T \B & d=2 & d=3 & d=4\\ \hline\hline
\dR=1 \T \B & n-1 & 0 & 0 & 0\\ \hline
\dR=2 \T \B & 3n-6 & 2n-5 & 0 & 0\\ \hline
\dR=3 \T \B & \ueber{n}{2} & n(n-3) & \frac{n(n-3)}{2}-1^\ast& 0\\ \hline
\dR=4 \T \B & \ueber{n}{2} & ? & ? & ?\\ \hline
\dR=5 \T \B & \ueber{n}{2} & \ueber{n}{3} & ? & ?\\ \hline
\dR=6 \T \B & \ueber{n}{2} & \ueber{n}{3} & ? & ?\\ \hline
\dR=7 \T \B & \ueber{n}{2} & \ueber{n}{3} & \ueber{n}{4} & ?
\end{array}
$
\caption{The maximal number of $d$-simplices for a $d$-complex on $n$ vertices that embeds into $\R^{\dR}$ --- we will be looking for lower and upper bounds for the cells filled with ''{?}``.
\newline $^\ast$ {\footnotesize This holds for $3$-complexes that PL-embed into $\R^3$.}}\label{Table}
\end{table}

\subsection*{Linear embeddings of $d$-complexes in $\R^d$ and $\R^{d+1}$}
In \cite{Dey.1998} Dey and Peach treat the cases $\dR=d$ and $\dR=d+1$ for linear embeddings with elementary methods. They prove more general statements; we now state their results in a reduced version.

\begin{thm}[{\cite[Theorem 2.1]{Dey.1998}}]\label{fulldim}
 One can select at most $O(n^{\lceil\frac{d}{2}\rceil})$ \linebreak \mbox{$d$-}dimensional simplices induced by $n$ points in $\R^d$ with the property that no~$2$ of them share a common interior point. This bound cannot be improved.
\end{thm}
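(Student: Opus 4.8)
The plan is to prove the two halves separately: first the upper bound $O(n^{\lceil d/2\rceil})$ on the number of pairwise interior-disjoint $d$-simplices spanned by $n$ points in $\R^d$, then a matching construction. For the upper bound, the key observation is that a collection of full-dimensional simplices with pairwise disjoint interiors, all having vertices among a fixed point set $P$ with $|P|=n$, forms (the top cells of) a geometric simplicial complex $\Delta$ embedded in $\R^d$ — after passing to the complex generated by these simplices and their faces, condition (ii) of a geometric simplicial complex is exactly the statement that two such simplices meet in a common face, which interior-disjointness plus convexity gives. So the question reduces to bounding $f_d(\Delta)$ for a pure $d$-complex $\Delta$ on $n$ vertices that is linearly embedded in $\R^d$. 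Here I would invoke the Upper Bound Theorem for polytopes (or rather its consequence for Euclidean simplicial complexes): such a complex is a subcomplex of a triangulation of a region of $\R^d$, and the number of top-dimensional simplices is controlled by the number of $\lfloor d/2\rfloor$-faces, which in turn is $O(n^{\lceil d/2\rceil})$ by the UBT bound $f_{\lfloor d/2\rfloor}(C_d(n)) = \Theta(n^{\lceil d/2\rceil})$. Concretely: each $d$-simplex of $\Delta$ contains at least one $\lceil d/2\rceil$-subset of its vertices that is "new" in an appropriate sense, or more robustly, one counts incidences between $d$-cells and $(\lceil d/2\rceil - 1)$-faces and uses that the complex, being embedded, cannot have too many low-dimensional faces either — this is the step I expect to need the most care, and the cleanest route is to cite the Dey–Edelsbrunner / Upper Bound machinery rather than reprove it.

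An alternative, more self-contained route for the upper bound: triangulate $\R^d$ by extending $\Delta$ to a full triangulation $T$ of a large simplex containing $P$ using only the points of $P$ (possible since the simplices of $\Delta$ are interior-disjoint, so they can be completed to a triangulation). Then $f_d(\Delta) \le f_d(T)$, and $T$ is a simplicial $d$-ball on $n$ vertices, so by the Upper Bound Theorem for simplicial balls (or spheres, after coning off the boundary) we get $f_d(T) = O(f_{\lfloor d/2\rfloor}(T)) = O(n^{\lceil d/2\rceil})$. The main obstacle here is making precise the "completion to a triangulation" step and the UBT-for-balls citation; both are standard but worth a sentence each.

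For the lower bound (tightness), I would place $n$ points on the moment curve $\gamma(t) = (t, t^2, \ldots, t^d)$ in $\R^d$ and take the boundary complex of the cyclic polytope $C_d(n)$ — or rather, a triangulation of $C_d(n)$ itself. The cyclic polytope $C_d(n)$ is simplicial and, by the Upper Bound Theorem, it is the polytope on $n$ vertices with the maximum number of faces of every dimension; in particular any triangulation of its interior (which uses only the $n$ vertices, e.g. a pulling triangulation) has $\Theta(n^{\lceil d/2\rceil})$ top-dimensional simplices, and these are induced by the $n$ points and pairwise interior-disjoint by construction. The face-count $f_{d-1}(\partial C_d(n)) = \Theta(n^{\lceil d/2\rceil})$ is exactly the number quoted in the abstract's lower bound $\Omega(n^{\lceil r/2\rceil})$ for $r = d$, and (as the excerpt notes) these numbers are computed in Section~\ref{simplicialpolytopes}. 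So the construction step reduces to: cyclic polytopes are simplicial, their $f$-vectors are maximal by UBT, and a triangulation of a simplicial polytope's interior retaining all vertices has at least as many facets as the boundary. The only subtlety is whether we want the simplices to have disjoint interiors as a family filling the polytope (yes, a triangulation) versus merely a large interior-disjoint antichain — the former suffices and is cleanest.
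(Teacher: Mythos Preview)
The paper does not prove this theorem; it is quoted from Dey and Pach~\cite{Dey.1998} without reproducing their argument. The only hint the paper gives is the remark following Theorem~\ref{deInDePlusEins} that the tightness example in Dey--Pach's proof is the cyclic-polytope construction of Chapter~\ref{lowerBounds}, so your lower-bound half matches what is actually used there.

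Your upper-bound argument, however, has a genuine gap. You assert that a family of $d$-simplices with vertices in $P$ and pairwise disjoint interiors automatically forms a geometric simplicial complex, because ``interior-disjointness plus convexity gives'' that any two meet in a common face. This is false. In $\R^2$ take the six points $(0,0),(2,0),(1,1),(1,0),(3,0),(2,-1)$ and the triangles
\[
\sigma_1=\conv\{(0,0),(2,0),(1,1)\},\qquad \sigma_2=\conv\{(1,0),(3,0),(2,-1)\}.
\]
Their interiors lie in the open half-planes $y>0$ and $y<0$ respectively and are therefore disjoint, yet $\sigma_1\cap\sigma_2$ is the segment from $(1,0)$ to $(2,0)$, which is a face of neither simplex. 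Your fallback route---completing the family to a triangulation of a large simplex using only the points of $P$---fails on the same example: the point $(1,0)$ sits in the relative interior of an edge of $\sigma_1$, so no triangulation on these six vertices can contain $\sigma_1$ as a cell. Perturbing to general position does not rescue the argument either, since moving $(1,0)$ to $(1,\varepsilon)$ for small $\varepsilon>0$ pushes a vertex of $\sigma_2$ into the interior of $\sigma_1$ and destroys interior-disjointness. Handling such non-generic intersections is precisely the work that the cited Dey--Pach proof has to do; it cannot be absorbed into a one-line appeal to the Upper Bound Theorem.
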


\begin{thm}[{\cite[Theorem 3.1]{Dey.1998}}]\label{deInDePlusEins}
 Let $E$ be a family of $d$-simplices induced by an $n$-element point set $V\subseteq \R^{d+1}$ such that no $2$ members of $E$ have a common interior point. Then $|E|=O(n^d)$.
\end{thm}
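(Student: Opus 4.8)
The plan is an induction on $d$ via a vertex-link argument, directly generalizing the proof of Lemma~\ref{zweiDrei}: there one surrounds each vertex of a $2$-complex in $\R^3$ by a small sphere, finds a plane graph on the sphere, and bounds its edges by Euler's formula; here, around a carefully chosen vertex of $E$ one recovers an instance of \emph{the same} extremal problem one dimension lower. Write $g(d,n)$ for the maximum of $|E|$ over all admissible families on $n$ points; the goal is $g(d,n)=O(n^d)$. For the base case $d=1$, a family of segments induced by $n$ points in $\R^2$ with pairwise disjoint interiors is essentially a plane straight-line graph (distinct edges meet only at shared endpoints), so it has $O(n)$ members by Lemma~\ref{sizePlanarGraphs}. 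Note that this bound does \emph{not} come from the link recursion itself (see the last paragraph); it is where the improvement over the trivial estimate originates.

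For the inductive step, fix $d\geq 2$ and an admissible family $E$ on a point set $V\subseteq\R^{d+1}$ with $|V|=n$. Pick $v$ to be a vertex of $\conv(V)$. Then $v\notin\conv(V\setminus\{v\})$, so there is a hyperplane $H$ with $v$ strictly on one side and all of $V\setminus\{v\}$ strictly on the other; let $\pi$ denote the central projection from $v$ onto $H\cong\R^{d}$ (so $\pi$ is constant along rays emanating from $v$). For $\sigma=\conv\{v,w_1,\dots,w_d\}\in E$ with $v\in\sigma$, the image $\pi(\sigma\setminus\{v\})$ is the $(d-1)$-simplex $\hat\sigma=\conv\{\pi(w_1),\dots,\pi(w_d)\}\subseteq H$ (non-degenerate, since $v,w_1,\dots,w_d$ are affinely independent), and $\pi$ carries $\operatorname{relint}\sigma$ onto $\operatorname{relint}\hat\sigma$. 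Using the pyramid structure of $\sigma$ — every relative-interior point of $\sigma$ lies on an open segment joining $v$ to a relative-interior point of the facet opposite $v$ — one checks that if $\operatorname{relint}\sigma_1\cap\operatorname{relint}\sigma_2=\emptyset$ for two simplices of $E$ through $v$, then $\operatorname{relint}\hat\sigma_1\cap\operatorname{relint}\hat\sigma_2=\emptyset$ as well (a common point of the two images pulls back to a single ray from $v$ meeting both $\operatorname{relint}\sigma_1$ and $\operatorname{relint}\sigma_2$, forcing these to intersect near $v$). Thus $\{\hat\sigma : \sigma\in E,\ v\in\sigma\}$ is an admissible family of $(d-1)$-simplices induced by the at most $n-1$ points $\pi(V\setminus\{v\})$ in $H\cong\R^{d}$, so the induction hypothesis gives that at most $g(d-1,n-1)=O(n^{d-1})$ simplices of $E$ contain $v$. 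Deleting $v$ leaves an admissible family on $n-1$ points in $\R^{d+1}$, whence
\[
g(d,n)\ \leq\ g(d,n-1)+g(d-1,n-1),
\]
and unrolling this recursion from the trivial value $g(d,d+1)=O(1)$ yields $g(d,n)=O(n^d)$.

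The step I expect to require the most care is this geometric reduction: verifying that at a convex-hull vertex $v$ the star of $E$ transfers \emph{without loss} to a genuine lower-dimensional instance — that $\pi$ keeps each simplex non-degenerate and does not identify distinct link simplices beyond shared boundary faces (which is exactly where the choice of $v$ on $\conv(V)$, and if necessary a general-position remark on $V$, is used), and that the disjoint-interiors hypothesis is inherited by the link. It is also worth flagging where the gain over the trivial bound $\ueber{n}{d+1}$ comes from: the link recursion by itself, started from $d=0$ (where it merely says at most $n{-}1$ segments leave any vertex), only reproduces $O(n^{d+1})$; it is Euler's formula at $d=1$ — the fact that $g(1,n)=O(n)$ rather than $O(n^2)$ — that makes the induction actually improve on the trivial estimate, and each subsequent dimension then costs one more factor of $n$.
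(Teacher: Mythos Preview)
The thesis does not give its own proof of this theorem; it is quoted from \cite{Dey.1998} without argument, so there is nothing in the paper itself to compare your attempt against.

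That said, your proof is correct, and it is in fact the approach of Dey and Pach's original argument: induct on $d$ by picking a vertex $v$ of $\conv(V)$, project centrally from $v$ onto a separating hyperplane $H\cong\R^{d}$, and recognize the star of $v$ as an admissible $(d-1)$-dimensional instance on at most $n-1$ points. The step you rightly flag as the crux --- that disjoint relative interiors survive the projection --- holds exactly as you outline: if $p\in\operatorname{relint}\hat\sigma_1\cap\operatorname{relint}\hat\sigma_2$, the ray from $v$ through $p$ meets each $\operatorname{relint}\tau_i$ (the face of $\sigma_i$ opposite $v$) in a single point $q_i$, and then the open segments $(v,q_1)$ and $(v,q_2)$, both lying on that ray and both contained in the respective relative interiors, must overlap near $v$. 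This also shows distinct simplices through $v$ project to distinct $(d-1)$-simplices, so no counting is lost. The recursion $g(d,n)\le g(d,n-1)+g(d-1,n-1)$ with $g(1,n)=O(n)$ then gives $g(d,n)=O(n^d)$ as you say.
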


As two maximal simplices of a simplicial complex are not allowed to share a common interior point, these upper bounds apply to the class of simplicial complexes with geometric realizations in $\R^d$, resp. $\R^{d+1}$.

The example showing the asymptotic tightness in the proof of Theorem \ref{fulldim} is a simplicial complex. Actually, it is the same example that will give us our lower bound in Section \ref{lowerBounds}. Hence, the tightness also holds for simplicial complexes:
\[
 \max\left\{f_d(K) \;\middle|\; \dim(K)=d,\, |V(K)|=n,\, K \text{ embeds linearly in } \R^d \right\} = \Theta(n^{\lceil\frac{d}{2}\rceil}).
\]
The bound in Theorem \ref{deInDePlusEins} is asymptotically tight for $d=2$, as we saw in Lemma \ref{zweiDrei} above. For $d\geq3$ we will see a lower bound of order $\Omega(n^{\lceil\frac{d+1}{2}\rceil})$.

\section{Algebraic Shifting and the Kalai-Sarkaria-Conjecture}\label{conjecture}
Before we address the questions of lower and upper bounds, we now present a conjecture by Kalai and Sarkaria on shifted complexes which implies a conjecture concerning Problem \ref{dieFrage} and also examine Conjecture \ref{Conj1} a little closer.

Let us first look at Conjecture \ref{Conj1}.
First, note that the bound on the number of triangles in this conjecture can be rephrased as a bound on the average number of triangles per edge:
Let $f_{12}(K)$ denote the number of edge-triangle incidences in a complex $K$, i.e., $f_{12}(K)=3f_2(K)$.
Then the condition $f_2(K) \geq 4 f_1(K)$ is equivalent to asking for 
\[
 \frac{f_{12}(K)}{f_1(K)} \geq 12.
\]

The $2$-skeleton of the boundary of the cyclic $5$-polytope shows that strengthening the conjecture by asking for $f_2(K) \geq c f_1(K)$ with $c<4$ is not possible:
For $K=(\partial C_5(n))^{\leq 2}$ we have that $K$ embeds into $\R^4$ and that
\[
 f_2(K)=4 \ueber{n}{2} -4n +10 \text{ and } f_1(K) = \ueber{n}{2}, 
\]
which shows that $\frac{f_2(K)}{f_1(k)} \geq c$ for every $c<4$ and $n$ large enough.

Originally, Kalai had also proposed a stronger version of this conjecture, which would apply to $K$ if it satisfies $f_2(K) \geq 4 f_1(K)-10 f_0(K)+20$.

The following example, found by Raman Sanyal and myself, contradicts the strengthening:
The boundary of a bipyramid over a triangle with the missing triangle added has $5$ vertices, $9$ edges and $7$ triangles and thereby fulfills the strengthened inequality. At the same time it is clearly embeddable into $\R^4$ and, indeed, into $\R^3$. (See Figure~\ref{counterexample}.)

\begin{figure}[htbp]
\centering
\includegraphics[scale=0.2]{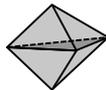}
\caption{The boundary of a bipyramid over a triangle with the missing triangle added.}\label{counterexample}
\end{figure}

%
A further conjecture concerning Problem \ref{dieFrage} can be derived from a conjecture by Kalai and Sarkaria on shifted complexes.
To phrase this statement we first need to learn a little bit about algebraic shifting, a prominent tool in $f$-vector theory. More details on and references to proofs for the results stated here can be found in \cite{Kalai.2002} and \cite{Bjoerner.1988}.

\begin{Def}[shifted complex]
 A simplicial complex $K$ on the vertex set $[n]$ is called \emph{shifted} if for any $F \in K$, $i \in F$ and $j < i$ we also have
 $(F\setminus\{i\}) \cup \{j\} \in K$. 
\end{Def}
This means in a shifted complex we can exchange any vertex of a simplex with a ``smaller'' one and still get a simplex of the complex.

There are several shifting operations which assign to a simplicial complex~$K$ a shifted complex $\Delta(K)$ with the same $f$-vector as $K$.
Algebraic shifting, introduced by Kalai \cite{Kalai.1984,Kalai.1985}, is based on algebraic constructions. It preserves not only the $f$-vector but among other topological properties also the Betti-numbers (i.e., the ranks of the homology groups) of the complex.
Since every shifted complex is homotopy equivalent to a wedge of spheres, no shifting operation can preserve the homotopy type in general.

Algebraic shifting comes in two versions: exterior and symmetric algebraic shifting. For both operations, the resulting shifted complexes $\Delta^{ext}(K)$ and $\Delta^{symm}(K)$ of a simplicial complex $K$ depend on the characteristic of the field that is used in the construction.
These two operations do not coincide: For a simplicial complex $K$ the shifted complexes $\Delta^{ext}(K)$ and $\Delta^{symm}(K)$ are generally not the same.

The conjecture we are interested in involves $\mathcal{C}(\partial C_d(n))$, the boundary complex of the cyclic $d$-polytope $C_d(n)$, which we use here again before defining it in Section~\ref{CyclicPolytopes}.
For this complex, exterior and symmetric algebraic shifting coincide and the shifted complex $\Delta(\mathcal{C}(\partial C_d(n)))$ is the complex $\Delta(d,n)$ that is defined as follows (see \cite{Kalai.1991} for symmetric, \cite{Murai.2007} for exterior shifting):

\begin{Def}[$\Delta(d,n)$]
 The pure $(d-1)$-dimensional simplicial complex with vertex set $[n]$ and the set of maximal simplices
\[
 \left\{S \in \ueber{[n]}{d} \;\middle|\; k \notin S \Rightarrow [k+1,d-k+2] \subseteq S \text{ for all } k \in [n]\right\}
\]
is denoted by $\Delta(d,n)$.
\end{Def}

Kalai and Sarkaria independently stated the following conjecture: 

\begin{conj}[{Kalai, Sarkaria \cite[Conjecture~27]{Kalai.2002}}]\label{Conj2}
 If a simplicial complex $K$ on $n$ vertices admits an embedding into $S^{r}$, then $\Delta(K) \subseteq \Delta(r+1,n)$.
\end{conj}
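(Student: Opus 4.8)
We do not know how to prove Conjecture \ref{Conj2} -- it is a well-known open problem -- but here is the line of attack I would pursue. The key structural fact is that the target complex is not arbitrary: as recalled above, $\Delta(r+1,n)$ is exactly the algebraic shift $\Delta(\mathcal{C}(\partial C_{r+1}(n)))$ of the boundary of the cyclic $(r+1)$-polytope, which the Upper Bound Theorem singles out as the face-number-maximal triangulation of $S^{r}$ on $n$ vertices. The plan is therefore to turn the topological hypothesis $||K|| \hookrightarrow S^{r}$ into a statement about triangulated $r$-spheres and then to exploit this extremality at the level of shifted complexes.

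First I would try to reduce to the case where $K$ is itself a triangulated $r$-sphere. Algebraic shifting is monotone under inclusion, $K \subseteq L \Rightarrow \Delta(K) \subseteq \Delta(L)$ (a standard property, see \cite{Kalai.2002}), so it would suffice to realize $K$ as a subcomplex of some triangulation $\widetilde{K}$ of $S^{r}$ on the \emph{same} vertex set $[n]$ and prove the conjecture for $\widetilde{K}$. Granting this reduction, the core step becomes a classification lemma: every shifted complex on $[n]$ that is Cohen--Macaulay and has the reduced homology of $S^{r}$ is contained in $\Delta(r+1,n)$. Since algebraic shifting preserves Cohen--Macaulayness and all Betti numbers, $\Delta(\widetilde{K})$ lies in this family, and the lemma closes the argument. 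Proving the lemma amounts to upgrading the $f$-vector inequalities of the Upper Bound Theorem -- as obtained in Stanley's face-ring proof, refined through Kalai's reading of the successive ``rows'' of a shifted complex via rigidity and hyperconnectivity matroids -- to the stronger conclusion that no face lies outside the explicitly prescribed facet set of $\Delta(r+1,n)$.

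The hard part will be exactly the two places where this plan is not yet known to work. The reduction to spheres breaks when $\dim K < r$: an embeddable low-dimensional complex need not sit inside any triangulation of $S^{r}$ on its own $n$ vertices (one may be forced to introduce new vertices), so monotonicity alone leaves a gap and one would instead need a direct proof that $\Delta(K)$ avoids the forbidden faces. And the classification lemma is delicate, because ``shifted, Cohen--Macaulay, homology of $S^{r}$'' is strictly weaker than ``$\Delta$ of an actual sphere triangulation'', so a naive combinatorial classification may overshoot; conversely, extracting face-exclusion statements for $\Delta(K)$ straight from the hypothesis $||K|| \hookrightarrow S^{r}$ is precisely the control that no current technique supplies beyond the planar case $r=2$ and a handful of small instances. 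Realistically I would expect this approach to yield the conjecture for triangulated spheres modulo the classification lemma, together with partial face-exclusion results (e.g.\ excluding specific low-index faces) in the general embeddable case.
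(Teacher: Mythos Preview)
The statement you are addressing is recorded in the paper as a \emph{conjecture}, not a theorem; the paper offers no proof of it and indeed treats it as open. It only uses the conjecture formally, deriving Conjecture~\ref{Conj3} from it and proving (Lemma~\ref{Delta5N}) that it would also imply Conjecture~\ref{Conj1}. So there is nothing in the paper to compare your argument against, and your opening admission that this is a well-known open problem is exactly right.

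As for the strategy you sketch: it is a reasonable outline, and you are honest about where it breaks. One point worth sharpening is your first reduction. You propose to embed $K$ as a subcomplex of a triangulated $r$-sphere $\widetilde{K}$ on the \emph{same} vertex set $[n]$ and then use monotonicity of shifting. The paper itself (in its concluding ``Further Thoughts'') flags even the weaker version of this step---finding a simplicial $r$-sphere on $O(n)$ vertices containing $K$---as something that ``seems unlikely'' though no counterexample is known. Requiring the sphere to live on exactly the same $n$ vertices is strictly harder and almost certainly false in general, so this reduction is not a technicality but the whole difficulty. Your second ingredient, the classification lemma for shifted Cohen--Macaulay complexes with the homology of $S^r$, is closer to things that are known for spheres (via Stanley's and Kalai's work), but as you note, ``shifted, CM, correct Betti numbers'' does not pin down $\Delta(r+1,n)$ on the nose, so even that step is genuinely open. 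In short: your write-up is an accurate survey of why the conjecture is hard, not a proof, and that matches the status the paper assigns it.
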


The type of algebraic shifting, exterior or symmetric, is not specified in \cite{Kalai.2002}.
Because shifting preserves $f$-vectors, it follows from $\Delta(K) \subseteq \Delta(r+1,n)$ that $f_k(K) \leq f_k(C_{r+1}(n))$ for all $k \in \{0,\ldots,\dim(K)\}$.
Thus, the above conjecture would immediately imply the following:

\begin{conj}\label{Conj3}
Let $d,\dR \geq 1$ such that $d \leq \dR \leq 2d$ and let $n \geq \dR+1$.
Let furthermore $K$ be a $d$-complex on $n$ vertices that embeds into $\R^{\dR}$.

Then $K$ has at most as many $d$-simplices as $\partial C_{\dR+1}(n)$, the boundary complex of the cyclic ($\dR+1$)-polytope with $n$ vertices. Moreover, $f_k(K) \leq f_k(C_{\dR+1}(n))$ for all $k \in \{1,\ldots,d\}$.
Thus,
\[
 \max\left\{f_d(K) \;\middle|\; \dim(K)=d,\, |V(K)|=n,\, ||K|| \hookrightarrow \R^{\dR} \right\} \leq f_d(C_{\dR+1}(n))=O(n^{\lceil\frac{\dR}{2}\rceil}).
\]
\end{conj}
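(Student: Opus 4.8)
The plan is to obtain Conjecture \ref{Conj3} as a formal consequence of Conjecture \ref{Conj2}, using two standard facts about algebraic shifting: that it leaves the $f$-vector unchanged, and that the shift of the boundary complex of the cyclic polytope is the explicit complex $\Delta(\dR+1,n)$. First I would dispose of the distinction between $\R^\dR$ and $S^\dR$: inverse stereographic projection turns any embedding $||K||\hookrightarrow\R^\dR$ into an embedding $||K||\hookrightarrow S^\dR$, so the hypothesis of Conjecture \ref{Conj2} is satisfied whenever $K$ embeds into $\R^\dR$. (Only this direction is needed; the converse, which holds whenever $d\neq\dR$ as noted in Section \ref{size}, is not required here.)

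Granting Conjecture \ref{Conj2}, we get $\Delta(K)\subseteq\Delta(\dR+1,n)$, where $\Delta$ denotes the relevant algebraic shifting operation (exterior or symmetric). Inclusion of simplicial complexes gives $f_k(\Delta(K))\le f_k(\Delta(\dR+1,n))$ for every $k$, and since algebraic shifting preserves the $f$-vector we have $f_k(K)=f_k(\Delta(K))$. By the cited identification ($\Delta(\dR+1,n)=\Delta(\mathcal{C}(\partial C_{\dR+1}(n)))$, from \cite{Kalai.1991} in the symmetric case and \cite{Murai.2007} in the exterior case), again applying $f$-vector preservation yields $f_k(\Delta(\dR+1,n))=f_k(\mathcal{C}(\partial C_{\dR+1}(n)))=f_k(C_{\dR+1}(n))$. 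Chaining these equalities and inequalities gives $f_k(K)\le f_k(C_{\dR+1}(n))$ for all $k\in\{1,\dots,d\}$, and in particular $f_d(K)\le f_d(C_{\dR+1}(n))$, which bounds the maximum in Problem \ref{dieFrage}. For the final asymptotic estimate $f_d(C_{\dR+1}(n))=O(n^{\lceil\dR/2\rceil})$ I would invoke the face-number formulas for cyclic polytopes that are worked out in Section \ref{simplicialpolytopes} (equivalently, the Upper Bound Theorem for the simplicial $(\dR+1)$-polytope $C_{\dR+1}(n)$).

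The main obstacle is not any step of the argument above but the status of Conjecture \ref{Conj2} itself, which remains open; what is really being proved here is the implication $\ref{Conj2}\Rightarrow\ref{Conj3}$, and that implication is almost entirely formal. The one place that demands genuine care is bookkeeping of conventions: Conjecture \ref{Conj2} does not fix which version of algebraic shifting (exterior or symmetric, and over which characteristic) is meant, so one must make sure that, for whichever version is used, the shift of $\mathcal{C}(\partial C_{\dR+1}(n))$ is indeed $\Delta(\dR+1,n)$ — precisely the content imported from \cite{Kalai.1991} and \cite{Murai.2007}, which guarantees the two versions agree on this particular complex.
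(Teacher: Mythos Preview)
Your proposal is correct and matches the paper's own justification: the paper states in one sentence that, since shifting preserves $f$-vectors, $\Delta(K)\subseteq\Delta(\dR+1,n)$ immediately gives $f_k(K)\le f_k(C_{\dR+1}(n))$, and then records Conjecture~\ref{Conj3} as a consequence of Conjecture~\ref{Conj2}. You supply the same argument with more care (the passage from $\R^\dR$ to $S^\dR$, the identification of $\Delta(\dR+1,n)$ with the shift of $\mathcal{C}(\partial C_{\dR+1}(n))$, and the ambiguity in the shifting version), and you correctly flag that only the implication \ref{Conj2}$\Rightarrow$\ref{Conj3} is being established.
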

\vspace{1em}

Note that the combinatorial constraints on the complexes in Conjecture~\ref{Conj3} and Conjecture~\ref{Conj1} are not equivalent:
 For a $2$-dimen\-sional simplicial complex on $n$ vertices that admits an embedding into $\R^4$, Conjecture~\ref{Conj3} would yield
\[
 f_2(K) \leq f_2(C_5(n))=4 \ueber{n}{2} -4n +10 \text{ and } f_1(K) \leq f_1(C_5(n))= \ueber{n}{2}.
\]
This does neither directly imply nor directly follow from $f_2(K) < 4 f_1(K)$.

Conjecture~\ref{Conj2}, however, also implies Conjecture~\ref{Conj1}. To see this, we prove the following:
\begin{lem}\label{Delta5N}
 If $K\subseteq\Delta(5,n)$ is $2$-dimensional and shifted, then 
\[
 f_2(K) \leq 4f_1(K)-8 < 4 f_1(K).
\]
\end{lem}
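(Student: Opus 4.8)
The plan is to understand the structure of the $2$-dimensional simplices of $\Delta(5,n)$ explicitly and then count. First I would unwind the definition of $\Delta(5,n)$: its maximal simplices are the $5$-subsets $S$ of $[n]$ such that for every $k$ with $k \notin S$ the block $[k+1, 7-k] \subseteq S$. Working out the small cases $k=1,2,3$ (for larger $k$ the interval $[k+1,7-k]$ is empty, so no constraint) should reveal that a $5$-set $S$ is maximal in $\Delta(5,n)$ exactly when it satisfies a short list of "shifted-type" conditions, e.g. something like: if $1 \notin S$ then $\{2,3,4,5,6\} \subseteq S$; if $2 \notin S$ then $\{3,4,5\} \subseteq S$; if $3 \notin S$ then $\{4\} \subseteq S$. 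Since the complex is pure of dimension $4$ and $K \subseteq \Delta(5,n)$ is a $2$-complex, the relevant object is the $2$-skeleton $(\Delta(5,n))^{\le 2}$, i.e. which triples of $[n]$ appear as faces. A triple $T$ is a face iff $T$ is contained in some maximal $5$-set satisfying the above; I would translate this into a direct combinatorial description of the allowed triples.

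Next I would exploit that $K$ is shifted. For a shifted $2$-complex the $f$-vector is determined by which triples lie in $K$, and the shifted triples of $[n]$ can be listed in the (reverse-)lexicographic order that shifting respects. The inequality $f_2(K) \le 4 f_1(K) - 8$ is linear and homogeneous-ish in the face counts, so it suffices to verify it for the "largest" shifted $2$-complexes inside $\Delta(5,n)$ — namely to check that the full $2$-skeleton $L := (\Delta(5,n))^{\le 2}$ itself satisfies $f_2(L) = 4 f_1(L) - 8$ (I expect equality here, matching the cyclic polytope $\partial C_5(n)$, whose $2$-skeleton has $f_1 = \binom n2$ and $f_2 = 4\binom n2 - 4n + 10$, and indeed $4\binom n2 - 8 = 4\binom n2 - 8$ vs. $4\binom n2 - 4n+10$ — so actually I'd want to be careful: equality need not hold for $L$, and the bound should be checked as an inequality for every shifted $K$). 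The cleaner route: for a shifted complex $K \subseteq \Delta(5,n)$, partition its $2$-faces $\{a,b,c\}$ (with $a<b<c$) according to the pair $\{a,b\}$ of their two smallest vertices; show that each edge $\{a,b\} \in K$ is the "bottom pair" of at most $4$ triangles of $K$ when $\{a,b\}$ ranges appropriately, with a deficiency of $8$ coming from the finitely many small edges that are bottom pairs of fewer triangles. Concretely, for a triple $\{a,b,c\}$ in $\Delta(5,n)$ the constraints above should force $a \le 3$ or so, bounding the possible third vertices $c$; and conversely each admissible edge $\{a,b\}$ with $a,b$ small enough supports exactly the triangles $\{a,b,c\}$ for $c$ ranging over an interval, capped at $4$ by the interval-containment conditions.

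The main obstacle I anticipate is getting the constant $8$ exactly right — i.e. carefully accounting for the "boundary" edges near the small labels $1,2,3$ where an edge is the bottom pair of fewer than $4$ triangles (or where no triangle sits on top), since the bound $f_2 \le 4f_1 - 8$ is tight and off-by-a-few errors in this bookkeeping would break it. I would handle this by writing $L = (\Delta(5,n))^{\le 2}$ explicitly, computing $f_1(L)$ and $f_2(L)$ as closed-form polynomials in $n$ from the definition (this is the routine part), checking the identity $f_2(L) \le 4 f_1(L) - 8$ directly, and then arguing monotonicity: for shifted $K \subseteq L$, removing a maximal triangle in reverse-lex order decreases $f_2$ by $1$ and decreases $f_1$ by at most... — here one needs that when we pass from the full shifted skeleton down to $K$ the quantity $4f_1 - 8 - f_2$ stays nonnegative, which follows because in a shifted complex the "last" triangle (in colex) uses an edge that is otherwise unused, so deleting it drops $f_2$ by $1$ and $f_1$ by $1$, keeping $4f_1 - f_2$ from decreasing. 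The final inequality $4f_1(K) - 8 < 4f_1(K)$ is immediate, giving in particular $f_2(K) < 4 f_1(K)$, which is the bound relevant to Conjecture~\ref{Conj1}.
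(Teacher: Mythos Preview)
Your opening analysis of the maximal simplices of $\Delta(5,n)$ is correct and matches the paper, and you correctly identify that every triangle $\{a,b,c\}$ (with $a<b<c$) in $\Delta(5,n)$ has its smallest vertex $a\le 4$. But both of your proposed counting strategies break down.

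First, grouping triangles by their \emph{bottom} pair $\{a,b\}$ cannot give a bound of $4$ per edge: the edge $\{1,2\}$, for instance, is the bottom pair of every triangle $\{1,2,c\}$ with $3\le c\le n$, which is $n-2$ triangles, not at most $4$. The constraint $a\le 4$ controls the \emph{smallest} vertex, so the right move is the opposite one: map each triangle $\{a,b,c\}$ to its \emph{top} edge $\{b,c\}$. Then the fibre over a fixed edge consists of the triangles $\{m,b,c\}$ with $m<b$ and $m\le 4$, hence at most four. This is exactly what the paper does, and the constant $8$ then drops out immediately: since $K$ is shifted and $2$-dimensional it contains the triangle $\{1,2,3\}$, hence the edges $\{1,2\}$ and $\{1,3\}$; but no triangle maps to an edge whose smaller vertex is $1$, so these two edges are never hit, giving $f_2(K)\le 4(f_1(K)-2)$.

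Second, your monotonicity fallback is not salvageable as stated. You write that deleting the colex-last triangle drops $f_2$ by $1$ and $f_1$ by $1$, ``keeping $4f_1-f_2$ from decreasing'' --- but $4\cdot 1 - 1 = 3$, so that move would \emph{decrease} $4f_1-f_2$ by $3$, which goes the wrong way. Moreover the premise is false: in the full $2$-skeleton the colex-last triangle $\{4,n-1,n\}$ has all three of its edges contained in other triangles (e.g.\ $\{1,n-1,n\}$, $\{1,4,n\}$, $\{1,4,n-1\}$), so removing it does not free any edge. More fundamentally, passing from $L=(\Delta(5,n))^{\le 2}$ down to an arbitrary shifted $K\subseteq L$ can decrease $f_1$, which tightens the target $4f_1-8$; the inequality for $L$ alone says nothing about $K$.
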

\begin{proof}
We first determine the set of $2$-simplices of $\Delta(5,n)$.
A maximal simplex of $\Delta(5,n)$ is a set $S\subseteq[n]$ with $|S|=5$, satisfying the following constraints:
\begin{eqnarray*}
 1 \notin S &\implies& [2,6] \subseteq S,\\
 2 \notin S &\implies& [3,5] \subseteq S,\\
 3 \notin S &\implies& 4 \in S.
\end{eqnarray*}
The set of maximal simplices of $\Delta(5,n)$ is hence
\[
 \{23456\} \cup \left\{1345x \;\middle|\; x \geq 6\right\} \cup \left\{124xy \;\middle|\; 5 \leq x < y\right\} \cup \left\{123xy\;\middle|\; 4 \leq x < y\right\},
\]
where we write $abcde$ for the set $\{a,b,c,d,e\}$.
It follows from this that $\Delta(5,n)$ has the following $2$-simplices:
\[
 \left\{1xy \;\middle|\; 2 \leq x < y\right\} \cup \left\{2xy \;\middle|\; 3 \leq x < y\right\} \cup \left\{3xy \;\middle|\; 4 \leq x < y\right\} \cup \left\{4xy \;\middle|\; 5 \leq x < y\right\},
\]
where, again, $abc$ denotes the set $\{a,b,c\}$.

Now let $K$ be a shifted subcomplex of $\Delta(5,n)$. Every $2$-simplex of $K$ is of the form $mab$ with $1 \leq m \leq 4$ and $m < a < b$.
If $T$ denotes the set of $2$\mbox{-}simplices of $K$ and $E$ the set of edges, we consider the map $\varphi \!:\! T \to E$ that maps the $2$-simplex $mab$ to the edge $ab=\varphi(mab)$, which is in $K$ because $mab$ is in $K$.
A fixed edge in $K$ is the image of at most four $2$-simplices. Hence, $f_2(K) \leq 4 f_1(K)$.

Because $K$ is shifted and contains some $2$-simplex, the simplex $123$ and with it the edges $12$ and $13$ are in $K$.
The map $\varphi$ doesn't map any $2$-simplex to these two edges. This shows that
\[
 f_2(K) \leq 4(f_1(K)-2) < 4 f_1(K).
\]
\end{proof}
If Conjecture~\ref{Conj2} would hold, a $2$-complex $K$ admitting an embedding into $\R^4$ would satisfy $\Delta(K) \subseteq \Delta(5,n)$, and hence 
\[
 f_2(K) = f_2(\Delta(K)) < 4 f_1(\Delta(K)) = 4 f_1(K).
\]
To see that the combinatorial constraint on the complexes from Conjecture~\ref{Conj1} does not imply the statement of Conjecture~\ref{Conj2} consider the simplicial complex $K=\Delta(5,n)^{\leq 2}\cup\{567\}$.
This complex is shifted and fulfills
\[
 f_2(K) = f_2(\Delta(5,n)) +1 \leq 4f_1(\Delta(5,n))-7 < 4f_1(K),
\]
where the second inequality holds by Lemma~\ref{Delta5N} and the third because we have  $f_1(K)=f_1(\Delta(5,n))$.
\chapter{Lower Bounds}\label{lowerBounds}
%
This chapter will be devoted to finding lower bounds for all instances of Problem~\ref{dieFrage}.
We first introduce polytopes in general and, as an example, the aforementioned cyclic polytopes.
We will see that certain polytopes can be used as examples of embeddable complexes and that the cyclic polytopes have maximal $f$-vectors among them. These will then yield the promised lower bound.
For the case $\dR = 2d$ we afterwards consider why this bound cannot be improved by simple means.
\section{Polytopes}\label{Polytopes}
Convex polytopes are geometric objects with a lot of underlying combinatorial structure. Spanning a whole area of mathematics, they are interesting on their own right and are not treated fairly when just considered as examples of simplicial complexes.
But as this is the focus of this thesis, we will have to make do with a very short introduction into the topic of convex polytopes, focusing on their combinatorial aspects, especially, of course, on the numbers of faces, and possible embeddings.
We will mainly follow \cite{Ziegler.1998}.

Let us begin with the definition of a convex polytope.
We will from now on just speak of ``polytopes'' as we will not consider non-convex polytopes.
%
\begin{Def}[polytope]
 A \emph{polytope} $P$ in $\R^{\dR}$ is the convex hull of finitely many points in $\R^{\dR}$.
  The \emph{dimension} of $P$ is the dimension of its affine hull.
 If $P$ is \mbox{$d$-dimensional} it is called a \emph{$d$-polytope}.
\end{Def}

This tells us that we already saw examples of polytopes: $d$-simplices, which are convex hulls of $d+1$ points in $\R^{\dR}$ for $r \geq d$.
The $d$-cube is another elementary example of a polytope. It is the convex hull of all points with $0/1$-coordinates in $\R^d$.
Further examples are the cyclic polytopes which we will consider in the next section.

A major basic theorem in polytope theory states that a polytope can equivalently be defined as a bounded intersection of a finite family of closed halfspaces in some $\R^{\dR}$ (e.g., \cite[Theorem 1.1]{Ziegler.1998}).
Here, ``bounded'' means not containing any ray of the form $\left\{x + \lambda y \;\middle|\; \lambda \geq 0\right\}$.

We will now define
the faces of a polytope which connect the geometric object with a combinatorial structure.

\begin{Def}[face of a polytope]
 Let $P$ be a polytope in $\R^{\dR}$.
 If $H$ is a hyperplane for which $P$ lies entirely in one of the halfspaces determined by $H$, the intersection of $P$ and $H$ is called a \emph{face} of $P$.
 The intersection of $P$ with the entire space $\R^{\dR}$, i.e., $P$ itself, is also considered as a face of $P$.
 All other faces are referred to as \emph{proper faces}.
\end{Def}

For every polytope it is possible to find a hyperplane that doesn't intersect~$P$. Thus, the empty set is a face of every polytope.

One can also quickly see that every face $F$ of $P$ is itself a polytope: Let $H_F$ be a hyperplane defining $F$. Then $F$ is the intersection of $P$ with the halfspace determined by $H_F$ that does not contain $P$ entirely and hence an intersection of finitely many halfspaces.

Thus, we have a notion of dimension for the faces of a polytope.
Faces of dimension $0$ are referred to as \emph{vertices}.
A $(d-1)$-dimensional face of a $d$\mbox{-}polytope is called a \emph{facet}.

One can also show the following basic properties of faces of polytopes:
\begin{prop}[{\cite[Proposition 2.2, Proposition 2.3 and Exercise 2.4]{Ziegler.1998}}]\label{faces}
Let $P \subset \R^{\dR}$ be a polytope.
\begin{itemize}
 \item Let $V$ be the set of vertices of $P$. Then $P=\conv(V)$.
 \item If $X \subset \R^{\dR}$ is finite and $P=\conv(X)$, then $X$ contains the vertices of $P$. 
 \item Every proper face of $P$ is contained in a facet of $P$.
 \item For a face $F$ of $P$, every face of $F$ is also a face of $P$.
 \item The intersection of two faces of $P$ is again a face of $P$.
\end{itemize}
\end{prop}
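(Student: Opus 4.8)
The statement to prove is Proposition~\ref{faces}, which bundles five standard facts about faces of polytopes. Since the excerpt ends at the Proposition statement, I'll plan a proof of all five bullets.

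The plan is to work throughout with the supporting-hyperplane definition of a face and to exploit that every face is itself a polytope (already argued in the text just before the Proposition). For the first two bullets, I would set $V$ equal to the vertex set and argue both inclusions $\conv(V)\subseteq P$ (trivial, since $V\subseteq P$ and $P$ is convex) and $P\subseteq\conv(V)$. For the latter I would take any finite $X$ with $P=\conv(X)$ (such an $X$ exists by definition of a polytope) and show by induction on $\dim(P)$ that every point of $P$ is a convex combination of points of $X$ that are vertices of $P$: a point in the relative interior forces nothing, but a point $x$ on the boundary lies in some proper face $F$ (obtained by taking a supporting hyperplane through $x$), which is a lower-dimensional polytope, so by induction $x$ is a convex combination of vertices of $F$; then I must check that a vertex of a face $F$ of $P$ is a vertex of $P$. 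The base case $\dim P=0$ is immediate. This simultaneously gives the first bullet ($P=\conv(V)$) and the second (any generating finite set $X$ must contain all vertices, since a vertex, being a $0$-dimensional face, cannot be written as a nontrivial convex combination of other points of $P$, hence must appear in $X$).

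For the third bullet — every proper face is contained in a facet — I would again induct on the codimension. A proper face $F$ is cut out by a supporting hyperplane $H$; if $F$ is already a facet we are done, otherwise $\dim F\le d-2$, and I would perturb or rotate $H$ within the pencil of hyperplanes containing $F$ until it meets $P$ in a strictly larger face $F'$ with $F\subsetneq F'$, then repeat; since dimensions strictly increase this terminates at a facet. (Alternatively, one can invoke the halfspace description: $P=\bigcap_i H_i^-$, and $F=P\cap H$ with $H$ supporting; then $F\subseteq P\cap H_i$ for at least one of the defining hyperplanes $H_i$, and $P\cap H_i$ is a facet.) The fourth bullet — a face of a face is a face — I would prove by taking $G$ a face of $F$ where $F$ is a face of $P$: write $F=P\cap H$ for a supporting hyperplane $H$ of $P$ and $G=F\cap H'$ for a hyperplane $H'$ supporting $F$; then construct a hyperplane $H''$ supporting $P$ with $P\cap H''=G$ by taking a suitable combination (intuitively, tilt $H$ slightly toward $H'$ so the new supporting hyperplane still contains $G$ but excludes the rest of $F$). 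The fifth bullet — the intersection of two faces is a face — follows from: $F_1=P\cap H_1$, $F_2=P\cap H_2$ with $H_i=\{x:\langle a_i,x\rangle=b_i\}$ and $\langle a_i,\cdot\rangle\le b_i$ on $P$; then $H=\{x:\langle a_1+a_2,x\rangle=b_1+b_2\}$ is a supporting hyperplane of $P$ and $P\cap H=F_1\cap F_2$, because on $P$ one has $\langle a_1+a_2,x\rangle\le b_1+b_2$ with equality exactly when both individual inequalities are tight. Throughout I will use the basic fact, recorded just above the Proposition, that each face is a polytope in its own right.

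The main obstacle I expect is the key lemma hidden inside bullets one, two, and four: that a vertex of a face of $P$ is a vertex of $P$ (equivalently, that faces of faces behave transitively at the level of $0$-dimensional faces), and more generally making the "tilt the supporting hyperplane" arguments rigorous rather than merely pictorial. The cleanest route around this is probably to adopt the halfspace description $P=\bigcap_{i=1}^m H_i^{\le}$ from the start: then a face is exactly $P\cap\bigcap_{i\in S}H_i$ for subsets $S$ of the defining inequalities that are "tight-achievable", and all five bullets become bookkeeping about which inequalities are tight. I would likely present the halfspace-based argument for bullets three through five (where it is slickest) and the inductive convex-combination argument for bullets one and two, cross-referencing \cite[Theorem 1.1]{Ziegler.1998} for the equivalence of the two descriptions of a polytope.
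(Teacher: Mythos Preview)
The paper does not actually prove this proposition: it states the five facts with a citation to \cite[Proposition 2.2, Proposition 2.3 and Exercise 2.4]{Ziegler.1998} and immediately moves on. So there is no in-paper argument to compare against.

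Your plan is essentially the standard one (and close in spirit to what Ziegler does in the cited reference). The argument for the fifth bullet via the sum $a_1+a_2$ of the normal vectors is clean and complete as stated. The main point that would need genuine care in a write-up is exactly the one you flag: your inductive proof of the first two bullets quietly uses that a vertex of a face of $P$ is a vertex of $P$, which is the $0$-dimensional instance of the fourth bullet; and your proof sketch of the fourth bullet in turn leans on a ``tilt the hyperplane'' step that is easy to picture but fiddly to make rigorous without the halfspace description. Your proposed fix --- proving bullets three through five first via the irredundant halfspace description $P=\bigcap_i H_i^{\le}$ and then feeding the $0$-dimensional case of bullet four back into the induction for bullets one and two --- is the right way to break the circularity, and is essentially how the cited source organizes things.
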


Let $P$ be a polytope for which all proper faces are simplices. The last two statements in Proposition \ref{faces} show that the set of all proper faces of $P$ is then a simplicial complex.
Polytopes with this property are called simplicial:

\begin{Def}[simplicial polytope]
 A polytope $P$ is \emph{simplicial} if every facet of $P$ is a simplex. 
\end{Def}

Note that, as all faces of a simplex are again simplices, by Proposition \ref{faces} it is enough to ask for the facets of $P$ to be simplices. See Figure \ref{polytopeFigure} for an example of a simplicial and a non-simplicial polytope.

\begin{figure}[ht]
\begin{center}
\includegraphics[scale=0.5]{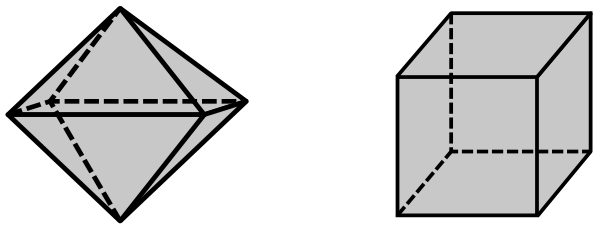}
\end{center}
\caption{An octahedron and a $3$-cube as examples of a simplicial and a non-simplicial polytope.}\label{polytopeFigure}
\end{figure}

We will be interested in simplicial polytopes with an additional property, neighborliness. The Upper Bound Theorem, which we will consider in more detail in Section \ref{CyclicPolytopes}, states that these have maximal numbers of faces.

\begin{Def}[neighborly polytope]
 A polytope $P$ is \emph{$k$-neighborly} if any subset of at most $k$ of its vertices is the vertex set of a face of $P$.
A $\lfloor\frac{d}{2}\rfloor$-neighborly $d$-polytope is also just called \emph{neighborly}.
\end{Def}
The $d$-simplex, e.g., is $d$-neighborly.
It is not hard to see that any other polytope can be at most $\lfloor\frac{d}{2}\rfloor$-neighborly (e.g., \cite[Section 7.1]{Gruenbaum.2003}).
The cyclic polytopes will turn out to be examples of neighborly simplicial polytopes.


%
The set of proper faces of a polytope always forms a sort of complex; it might just not consist of simplices, but of other polytopes. There is a generalization of the concept of simplicial complexes capturing this situation:
\begin{Def}[polytopal complex]
 A \emph{polytopal complex} in $\R^\dR$ is a non-empty family $\mathcal{C}$ of polytopes in $\R^\dR$ with:
 \begin{enumerate}
  \item 
   If $P \in \mathcal{C}$ and $P'$ is a face of $P$, then $P' \in \mathcal{C}$.
   \item
    $P_1, P_2 \in \mathcal{C} \Rightarrow P_1 \cap P_2$ is a face of both $P_1$ and $P_2$.
 \end{enumerate}
The \emph{dimension} of a polytopal complex $\mathcal{C}$ is $\dim(\mathcal{C}) \mathrel{\mathop:}= \max\left\{\dim(P)\;\middle|\;P \in \mathcal{C}\right\}$.
The set $|\mathcal{C}| \mathrel{\mathop:}= \bigcup_{P \in \mathcal{C}}P$ is called the \emph{underlying set} of $\mathcal{C}$.
\end{Def}

With a polytope $P$ we can associate $\mathcal{C}(\partial P)$, the \emph{boundary complex} of $P$, which consists of all proper faces of $P$ and by Proposition \ref{faces} is a polytopal complex.
Of course $P$ itself can also be considered as a polytopal complex.
As remarked earlier, for simplicial polytopes the boundary complex is a simplicial complex.

%
%

To be able to compare and equate polytopal complexes we need a concept of equivalence:
\begin{Def}[combinatorial equivalence]
 Two polytopal complexes $\mathcal{C}_1, \mathcal{C}_2$ are \emph{combinatorially equivalent} if there exists a bijective map $\phi:\mathcal{C}_1 \to \mathcal{C}_2$ such that $P_1' \subseteq P_1$ if and only if $\phi(P_1') \subseteq \phi(P_1)$ for $P_1', P_1 \in \mathcal{C}_1$.
\end{Def}
%
Note that this notion is a generalization of the concept of isomorphisms for simplicial complexes: A bijective simplicial map with simplicial inverse induces a map like this on the simplicial complexes.

%
By interpreting a polytope as a polytopal complex consisting of all of its faces, one gains from this definition a notion of combinatorial equivalence for polytopes.
For polytopes (and also for polytopal complexes) there is also a second, geometric notion of equivalence:
Two polytopes $P_1$ and $P_2$ in $\R^{\dR_1}$ and $\R^{\dR_2}$ respectively are \emph{affinely isomorphic} if there is an affine map $f\!:\!\R^{\dR_1} \rightarrow \R^{\dR_2}$ such that $f|_{P_1}$ is bijective onto $P_2$.

Observe that this is a stronger notion: While the existence of an affine isomorphism implies combinatorial equivalence, two combinatorially equivalent polytopes do not have to be affinely isomorphic. Any affine isomorphism maps, e.g., the square $\conv((0,0),(0,1),(1,0),(1,1))$, which is combinatorially equivalent to any $2$-polytope with $4$ vertices, onto a parallelogram.
As we are more interested in the combinatorial aspects of polytopes, we will use the combinatorial concept.

%
%
%
%
%
Before we move on to the introduction of cyclic polytopes we will now consider the embeddability of the simplicial complexes that arise from simplicial polytopes.

Let $P$ be a $d$-polytope. Like every $d$-polytope, $P$ can be embedded into $\R^d$.
By projecting from an inner point of $P$ onto a surrounding sphere, one can see that the underlying set of $\mathcal{C}(\partial P)$ is homeomorphic to a $(d-1)$-sphere. Thus, if we take out a facet $F$ (even one point would be enough, but we want to maintain the structure of a complex), the stereographic projection maps $\mathcal{C}(\partial P)\setminus F$ homeomorphically onto some subset of $\R^{d-1}$.

Another way to see this, which even yields a linear embedding, is via the technique of \emph{Schlegel diagrams} (e.g., \cite[Chapter 5]{Ziegler.1998}).
This gives us the following result:
\begin{thm}[{e.g., \cite[Proposition 5.6]{Ziegler.1998}}]\label{schlegel}
 For every $d$-polytope $P$ and every facet $F$ of $P$ there exists a polytopal complex in $\R^{d-1}$ that is combinatorially equivalent to $\mathcal{C}(\partial P)\setminus\{F\}$.
\end{thm}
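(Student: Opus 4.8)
The statement to prove is Theorem \ref{schlegel}: for every $d$-polytope $P$ and every facet $F$ of $P$, there is a polytopal complex in $\R^{d-1}$ combinatorially equivalent to $\mathcal{C}(\partial P)\setminus\{F\}$.

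The plan is to construct the Schlegel diagram of $P$ based at the facet $F$ explicitly. First I would choose a point $y$ just beyond the facet $F$: that is, pick the hyperplane $H_F$ supporting $F$, and take $y$ to be a point outside $P$, very close to the relative interior of $F$, lying on the far side of $H_F$ from $P$, but close enough that $y$ lies "beyond" $F$ and "beneath" every other facet of $P$ (such a point exists because the facets other than $F$ all have supporting hyperplanes that are bounded away from $\operatorname{relint}(F)$, so a small enough perturbation off $H_F$ stays on the inner side of all of them). The key geometric fact to establish is then that projecting $\partial P \setminus \operatorname{relint}(F)$ from the center $y$ onto the hyperplane $H_F$ is a bijection onto $F$, and that it maps each proper face $G \neq F$ of $P$ homeomorphically (in fact projectively, hence preserving the face structure) onto a polytope contained in $F \subset H_F \cong \R^{d-1}$.

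The key steps, in order: (1) Set up the projection map $\pi$ from $y$ to $H_F$ and verify it is well-defined and injective on $\partial P \setminus \operatorname{relint}(F)$ — injectivity follows because any ray from $y$ meeting $P$ enters and exits through the boundary, and the "beyond $F$, beneath all else" position of $y$ forces each such ray to hit $\partial P \setminus \operatorname{relint}(F)$ exactly once before crossing into $F$. (2) Check that $\pi$ sends each proper face $G$ of $P$ with $G \neq F$ to $\operatorname{conv}(\pi(\operatorname{vert}(G)))$, a polytope of the same dimension, because central projection is a projective transformation and these preserve convexity and affine-independence of point sets in general position relative to the center. (3) Verify that the images $\{\pi(G) : G \in \mathcal{C}(\partial P)\setminus\{F\}\}$ form a polytopal complex: condition (i) (closure under faces) is immediate from $\pi(G') = \operatorname{conv}(\pi(\operatorname{vert} G')) \subseteq \pi(G)$ being a face, and condition (ii) (intersections are common faces) follows from injectivity of $\pi$ together with the fact that $\mathcal{C}(\partial P)\setminus\{F\}$ is already a polytopal complex, so $\pi(G_1)\cap\pi(G_2) = \pi(G_1 \cap G_2)$ is the image of a common face. (4) Finally, define $\phi: \mathcal{C}(\partial P)\setminus\{F\} \to \{\pi(G)\}$ by $\phi(G) = \pi(G)$ and observe it is a bijection respecting inclusion in both directions (again by injectivity of $\pi$ on vertices and the face-preservation in step 2), which is exactly combinatorial equivalence.

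The main obstacle I expect is step (1)–(2): making precise the claim that $y$ can be placed so that central projection is injective on $\partial P \setminus \operatorname{relint}(F)$ and preserves all face incidences. This is the genuine content of the Schlegel construction. Concretely one argues that for $y$ beyond $F$ and beneath every other facet, a face $G \neq F$ is "visible" from $y$ (its supporting hyperplane separates $y$ from $P$'s interior only in the degenerate boundary sense), so the segment from $y$ to any point of $\operatorname{relint}(G)$ re-enters $P$ only through $\operatorname{relint}(F)$; this yields that $\pi|_G$ is affine and injective, hence an affine (indeed projective) isomorphism onto its image. Since the paper explicitly allows citing \cite[Chapter 5]{Ziegler.1998} and states the result as "e.g., \cite[Proposition 5.6]{Ziegler.1998}," I would at this point invoke the standard theory of Schlegel diagrams for the detailed verification that this projection behaves as claimed, rather than reproving projective invariance of face lattices from scratch. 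The topological half of the conclusion — that one could alternatively use stereographic projection from an interior point of $P$ onto $S^{d-1}$ and then off a point of $\operatorname{relint}(F)$ — gives a homeomorphism but not a \emph{linear} realization, so the Schlegel approach is the one that actually delivers the polytopal (piecewise-linear) complex required by the statement.
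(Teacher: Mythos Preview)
The paper does not actually prove this theorem; it is stated as a known result with the citation to \cite[Proposition 5.6]{Ziegler.1998}, preceded only by the remark that the Schlegel diagram technique yields a linear embedding. Your outline is precisely the standard Schlegel diagram construction that this citation points to, and it is correct; since the paper itself defers entirely to Ziegler, there is nothing further to compare.
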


In summary, the main result of this section is that we can get examples for embeddable complexes from simplicial polytopes:
For a simplicial $d$-polytope $P$ and every facet $F$ of $P$ we have the simplicial complex $\mathcal{C}(\partial P)\setminus F$.
This complex is isomorphic to a (geometric) simplicial complex in $\R^{d-1}$ and thus admits a linear embedding into $\R^{d-1}$.
\section{The Cyclic Polytopes and the Upper Bound Theorem}\label{CyclicPolytopes}

In this section, having mentioned them several times already, we finally introduce cyclic polytopes.
After seeing their definition and some of their properties, we will encounter the promised Upper Bound Theorem which states that for every $k$, the cyclic $d$-polytope on $n$ vertices has the maximal number of $k$-faces among all $d$-dimensional polytopes $P$ with $f_0(P)=n$.

To define cyclic polytopes we need to recall the definition of the moment curve, introduced in Section \ref{embeddings}.
For $d \geq 2$ and $n>d$, let $\gamma\!:\!\R \rightarrow \R^d$ be the moment curve in $\R^d$, i.e., $\gamma(t) = (t,t^2,\ldots,t^d)$, and choose $t_1 < t_2 < \ldots < t_n \in \R$.
What can we say about the polytope $P\mathrel{\mathop:}=\conv(\{\gamma(t_1),\gamma(t_2),\ldots,\gamma(t_n)\})$?

We saw in Lemma \ref{momentcurve} that
no $d+1$ distinct points on $\gamma$ can lie in a common hyperplane.
This immediately tells us two things:
Firstly, the affine hull of $P$ must be all of $\R^d$ because $n>d$. This makes $P$ a $d$-dimensional polytope.

Furthermore, as a facet is the intersection of $P$ with a hyperplane, no facet of $P$ can contain more than $d$ points of $\{\gamma(t_1),\gamma(t_2),\ldots,\gamma(t_n)\}$.
A facet, as a $(d-1)$-polytope, has to have at least $d$ vertices which by Proposition \ref{faces} have to be among the $\gamma(t_i)$. Thus, every facet of $P$ has $d$ affinely independent vertices and is hence a simplex.
This shows that $P$ is a simplicial $d$-polytope.
With a little more effort one can say much more about the combinatorial properties:

\begin{thm}[{Gale's Evenness Condition, e.g., \cite[Theorem 0.7]{Ziegler.1998}}]
Let $d \geq 2$, $n>d$, choose $t_1 < t_2 < \ldots < t_n \in \R$ and set 
\[
 C_d(t_1,t_2,\ldots,t_n) \mathrel{\mathop:}= \conv(\{\gamma(t_1),\gamma(t_2),\ldots,\gamma(t_n)\}).
\]
 Identify $\gamma(t_i)$ with $i$ and choose $S \subseteq [n]$ with $|S|=d$.
 Then the following two statements are equivalent:
 \begin{enumerate}
 \item $S$ is the vertex set of a facet of $C_d(t_1,t_2,\ldots,t_n)$.
 \item For all $i,j\notin S$ with $i<j$ the number of $k \in S$ between $i$ and $j$ is even. 
 \end{enumerate}
\end{thm}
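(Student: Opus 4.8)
The plan is to represent the hyperplane spanned by the $d$ points indexed by $S$ via a single affine functional, restrict that functional to the moment curve, and then read off the required sign pattern from an explicit factorization of a degree-$d$ polynomial.

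First I would fix $S\subseteq[n]$ with $|S|=d$ and, using Lemma~\ref{momentcurve}, observe that the $d$ points $\gamma(t_s)$, $s\in S$, are affinely independent; hence they span a unique hyperplane $H$, and we may write $H=\{x\in\R^d:h(x)=0\}$ for an affine functional $h(x)=a_0+a_1x_1+\dots+a_dx_d$ that is unique up to a nonzero scalar. Setting $p(t):=h(\gamma(t))=a_0+a_1t+\dots+a_dt^d$, this is a polynomial of degree at most $d$ whose roots include the $d$ distinct numbers $t_s$, $s\in S$. It is not identically zero, since otherwise $h$ would vanish on $d+1\le n$ affinely independent points of $\gamma$ and hence be identically zero; a nonzero polynomial of degree $\le d$ with $d$ distinct roots has degree exactly $d$, so $p(t)=c\prod_{s\in S}(t-t_s)$ with $c\ne 0$.

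Next I would establish the geometric translation: $S$ is the vertex set of a facet of $C:=C_d(t_1,\dots,t_n)$ if and only if all points $\gamma(t_j)$ with $j\notin S$ lie strictly on one side of $H$. For ``if'', after rescaling $h$ so that $h(\gamma(t_j))>0$ for all $j\notin S$, the polytope $C=\conv\{\gamma(t_i):i\in[n]\}$ lies in $\{h\ge 0\}$, and a convex combination $\sum\lambda_i\gamma(t_i)$ with $h=0$ must have $\lambda_j=0$ for every $j\notin S$; hence $C\cap H=\conv\{\gamma(t_s):s\in S\}$, a $(d-1)$-simplex, which is therefore a facet with vertex set $S$. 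For ``only if'', if $F=C\cap H'$ is a facet with vertex set $\{\gamma(t_s):s\in S\}$, then $\mathrm{aff}(F)=H'$ contains the affine span $H$ of those $d$ affinely independent points, and both are $(d-1)$-dimensional, so $H'=H$; moreover no $\gamma(t_j)$ with $j\notin S$ can lie on $H$, since it would then lie in $F=\conv\{\gamma(t_s):s\in S\}$, contradicting affine independence of the $d+1$ points $\{\gamma(t_j)\}\cup\{\gamma(t_s):s\in S\}$ (Lemma~\ref{momentcurve}). As $C$ lies on one side of $H$, these points all lie strictly on that same side.

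Finally I would compute signs: for $j\notin S$ we have $h(\gamma(t_j))=p(t_j)=c\prod_{s\in S}(t_j-t_s)$, and since $t_1<\dots<t_n$, the factor $t_j-t_s$ is negative exactly when $s>j$; hence $\operatorname{sign}(h(\gamma(t_j)))=\operatorname{sign}(c)\cdot(-1)^{a(j)}$ with $a(j):=|\{s\in S:s>j\}|$. All these signs agree precisely when $a$ has constant parity on $[n]\setminus S$, i.e.\ when $a(i)\equiv a(j)\pmod 2$ for all $i<j$ in $[n]\setminus S$; and since $j\notin S$, $a(i)-a(j)=|\{s\in S:i<s<j\}|$, which is exactly the number of elements of $S$ between $i$ and $j$. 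Combining this with the geometric translation yields the equivalence of (1) and (2). There is no deep obstacle here: the crux is recognizing that $h\circ\gamma$ is a degree-$d$ polynomial pinned down up to scaling by the roots coming from $S$, and the only error-prone part is the bookkeeping in the geometric translation, where one must invoke affine independence carefully to rule out stray moment-curve points on $H$.
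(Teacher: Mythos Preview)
Your proof is correct and is essentially the standard argument (the one given in Ziegler's \emph{Lectures on Polytopes}, Theorem~0.7, which the paper cites). The paper itself does not supply a proof of Gale's Evenness Condition; it merely states the result with a reference, so there is no ``paper's own proof'' to compare against.

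One small remark: in your ``only if'' direction you rule out $\gamma(t_j)\in H$ for $j\notin S$ by arguing that $\gamma(t_j)$ would then lie in the facet $F=\conv\{\gamma(t_s):s\in S\}$ and invoking affine independence. This works, but you already have a more direct argument in hand: since $p(t)=c\prod_{s\in S}(t-t_s)$ has exactly the $d$ roots $\{t_s:s\in S\}$, the equation $p(t_j)=0$ forces $t_j\in\{t_s:s\in S\}$, which is impossible for $j\notin S$. Either route is fine.
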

This tells us for example that the vertex set of $P$ is $\{\gamma(t_1),\gamma(t_2),\ldots,\gamma(t_n)\}$:
For every $1 \leq i \leq n$ one can find a set $S \in \ueber{[n]}{d-1}$ such that $S \cup \{i\}$ satisfies the evenness condition. Thus, every $\gamma(t_i)$ is contained in a vertex set of a facet of $P$ and is hence a vertex of $P$.

Actually, Gale's Evenness Condition determines the complete combinatorics of $C_d(t_1,t_2,\ldots,t_n)$, independently of the choice of parameters $t_1,t_2,\ldots,t_n$, which justifies the following definition:
\begin{Def}[cyclic polytope]
 For $d \geq 2$ and $n > d$, the notation $C_d(n)$ refers to any member of the combinatorial equivalence class
 of $C_d(t_1,t_2,\ldots,t_n)$ for some choice of parameters $t_1 < t_2 < \ldots < t_n \in \R$.
We call $C_d(n)$ the \emph{cyclic $d$-polytope} on $n$ vertices.
 Its vertex set is identified with $[n]$ by $\gamma(t_i) \mapsto i$.
\end{Def}
The evenness condition can easily be extended to describe all proper faces of $C_d(n)$ (e.g., \cite[Exercise 0.8]{Ziegler.1998}), which then shows that the cyclic polytope $C_d(n)$ is neighborly, i.e., that any subset of at most $\lfloor\frac{d}{2}\rfloor$ vertices forms the vertex set of a face.

%

We already mentioned before that, for all $k$, the cyclic polytope $C_d(n)$ has the maximal number of $k$-faces among all $d$-polytopes
on $n$ vertices.
This result is known as the Upper Bound Theorem for polytopes.
For us this means that cyclic polytopes will yield the best examples we can obtain from polytopes.

To be able to phrase the theorem more compactly, we first introduce a notation for the numbers of faces of a polytope. Just as for simplicial complexes this is the $f$-vector of a polytope:

\begin{Def}[$f$-vector of a polytope]
The \emph{$f$-vector} of a $d$-dimensional polytope $P$ is the vector
$f(P)  = (f_0(P),f_1(P),\ldots,f_{d-1}(P))$ where $f_k(P)$ denotes the number of $k$\mbox{-}faces of $P$. 
\end{Def}
Note that for some purposes it is convenient to add $f_{-1}(\mathcal{P})=1$, corresponding to the empty face, as first entry of the $f$-vector. For our intentions this is not necessary.
\begin{thm}[{Upper Bound Theorem, e.g., \cite[Theorem 8.23]{Ziegler.1998}}]
\label{UBT}
 Let $P$ be a $d$-polytope on $n$ vertices and let $k < d$.
 Then $P$ can have at most as many $k$-faces as the cyclic polytope $C_d(n)$: $f_k(P)\leq f_k(C_d(n))$.

 If $f_k(P) = f_k(C_d(n))$ for some $k$ with $\lfloor\frac{d}{2}\rfloor-1 \leq k \leq d-1$, then $P$ is neighborly.
\end{thm}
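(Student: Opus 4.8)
The plan is to reduce everything to simplicial polytopes, translate the problem into a statement about the $h$-vector of the boundary sphere, halve the number of unknowns using the Dehn--Sommerville relations, bound the remaining $h_i$ via shellability, and finally read off the equality case.

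\emph{Reduction and the $h$-vector.} First I would dispose of the non-simplicial case: if $P$ is not simplicial, perturb its vertices one at a time into sufficiently general position (``pulling''); the result is a simplicial $d$-polytope $P'$ on the same $n$ vertices with $f_k(P')\ge f_k(P)$ for every $k$, because pulling never destroys an existing face and subdivides every non-simplex face into simplices. Hence it suffices to prove the bound for simplicial $P$, where $\mathcal{C}(\partial P)$ is a simplicial $(d-1)$-sphere. For such a complex define the $h$-vector by
\[
 \sum_{i=0}^{d} h_i\,x^{d-i} \;=\; \sum_{i=0}^{d} f_{i-1}(P)\,(x-1)^{d-i}
\]
(with $f_{-1}(P):=1$). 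This is an invertible change of variables whose inverse, expressing each $f_k$ as a combination of the $h_i$, has nonnegative coefficients; so bounding all $h_i$ from above simultaneously bounds all $f_k$, and $C_d(n)$ is exactly the common equality case.

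\emph{Dehn--Sommerville and the key estimate.} Being the boundary of a polytope, $\mathcal{C}(\partial P)$ is an Eulerian sphere and satisfies the Dehn--Sommerville relations $h_i=h_{d-i}$; it therefore suffices to bound $h_0,\dots,h_{\lfloor d/2\rfloor}$. The heart of the proof is the inequality
\[
 h_i(P)\;\le\;\binom{n-d+i-1}{i}\qquad\text{for }0\le i\le\Big\lfloor\tfrac d2\Big\rfloor,
\]
whose right-hand side is precisely $h_i(C_d(n))$. I would establish it via shellability: by the Bruggesser--Mani theorem $\partial P$ admits a line shelling, and for any shellable complex $h_i$ counts the facets whose ``restriction'' (the unique minimal face first introduced at that step of the shelling) has exactly $i$ elements; since each such restriction is a face of $\partial P$ using only the $n$ available vertices while every facet already uses $d$ of them, a counting argument yields the stated binomial bound. (Alternatively one may quote Stanley's argument that $\partial P$ is Cohen--Macaulay, so $h_i$ is the dimension of the degree-$i$ part of the Artinian reduction of the Stanley--Reisner ring, a quotient of a polynomial ring in $n-d$ variables.) Feeding the estimate back through Dehn--Sommerville gives $h_i(P)\le h_i(C_d(n))$ for all $i$, hence $f_k(P)\le f_k(C_d(n))$ for every $k<d$.

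\emph{The equality case and the main obstacle.} Now suppose $f_k(P)=f_k(C_d(n))$ for some $k$ with $\lfloor d/2\rfloor-1\le k\le d-1$. Writing $f_k$ as a combination of the $h_i$ and eliminating $h_{\lfloor d/2\rfloor+1},\dots,h_d$ by Dehn--Sommerville produces an identity $f_k=\sum_{i=0}^{\lfloor d/2\rfloor}c_i^{(k)}h_i$ in which all coefficients $c_i^{(k)}$ are strictly positive exactly for $k$ in this range; since $h_0=1$ is fixed and $h_i(P)\le h_i(C_d(n))$ for all $i$, equality of the two sums forces $h_i(P)=h_i(C_d(n))=\binom{n-d+i-1}{i}$ for all $0\le i\le\lfloor d/2\rfloor$. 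For the boundary of a simplicial polytope this is equivalent to $f_{j-1}(P)=\binom{n}{j}$ for $j\le\lfloor d/2\rfloor$, i.e.\ to $P$ being $\lfloor d/2\rfloor$-neighborly, and a short extra argument handles the non-simplicial case. The main obstacle is the uniform bound on the $h_i$: the Dehn--Sommerville reduction and the triangular, sign-positive $f$--$h$ change of basis are pure bookkeeping, but proving $h_i\le\binom{n-d+i-1}{i}$ genuinely requires either careful control of the restrictions appearing in a shelling or the Cohen--Macaulayness of simplicial spheres; verifying the positivity of the coefficients $c_i^{(k)}$ over precisely the stated range of $k$ is the second delicate point, and it is exactly what makes the neighborliness conclusion sharp.
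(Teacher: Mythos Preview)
The paper does not prove the Upper Bound Theorem at all: it states it with a reference to \cite[Theorem 8.23]{Ziegler.1998}, remarks that ``the first complete proof \ldots\ was given by McMullen'' and that McMullen's proof ``involves deeper results of polytope theory than we have at hands,'' and then moves on to quoting the explicit face numbers in Proposition~\ref{simpNeighb}. So there is nothing to compare against; your sketch is essentially the McMullen proof as presented in Ziegler's book, which is exactly the source the paper defers to.

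Your outline is correct in its architecture (pulling to reduce to the simplicial case, passing to the $h$-vector, halving via Dehn--Sommerville, bounding $h_i$ for $i\le\lfloor d/2\rfloor$, reading off equality). One point is too glib: the sentence ``since each such restriction is a face of $\partial P$ using only the $n$ available vertices while every facet already uses $d$ of them, a counting argument yields the stated binomial bound'' does not actually produce $h_i\le\binom{n-d+i-1}{i}$. McMullen's shellability argument instead proves the recursive inequality $(i+1)h_{i+1}\le(n-d+i)h_i$ by double-counting over vertex links (using that a line shelling restricts to a shelling of each link and that $h_i(\mathrm{lk}\,v)\le h_i(P)$), from which the binomial bound follows by induction from $h_0=1$. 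Your alternative via Cohen--Macaulayness is valid and does give the bound directly, but that is Stanley's argument rather than McMullen's. The equality analysis you sketch (positivity of the coefficients $c_i^{(k)}$ in the stated range forcing all $h_i$ to be maximal, hence neighborliness) is the right idea and matches how it is done in the cited reference.
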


The first complete proof of the Upper Bound Theorem for all polytopes was given by McMullen \cite{McMullen.1970}.
McMullen's proof, also given in \cite{Ziegler.1998}, which involves deeper results of polytope theory than we have at hands, also shows that all simplicial neighborly $d$-polytopes on $n$ vertices have the same $f$-vector as $C_d(n)$ and yields the exact number of faces:
\begin{prop}[{e.g., \cite[Corollary 8.28]{Ziegler.1998}}]\label{simpNeighb}
 Let $P$ be a simplicial neighborly $d$-polytope with $n$ vertices and let $k < d$.

 Then $f_k(P) = \sideset{}{^*}\sum_{i=0}^{\frac{d}{2}} \left( \ueber{d-i}{k+1-i} + \ueber{i}{k+1-d+i} \right) \ueber{n-d-1+i}{i}$, where
  \[
   \sideset{}{^*}\sum_{i=0}^{\frac{d}{2}} T_i = \begin{cases} 
                                        T_0 + T_1 + \ldots + T_{\lfloor\frac{d}{2}\rfloor} & \text{if } d \text{ is odd}, \\
                                        T_0 + T_1 + \ldots + T_{\lfloor\frac{d}{2}\rfloor-1} + \frac{1}{2} 
                                        T_{\lfloor\frac{d}{2}\rfloor} & \text{if } d \text{ is even}.
	                              \end{cases}
 \]
\end{prop}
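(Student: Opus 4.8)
The plan is to derive the face numbers of a simplicial neighborly $d$-polytope from the Dehn--Sommerville relations together with the known values of the first half of the $f$-vector, which are forced by neighborliness. First I would pass from the $f$-vector to the $h$-vector, defined by $\sum_{i=0}^{d} h_i\,x^{d-i} = \sum_{i=0}^{d} f_{i-1}\,(x-1)^{d-i}$ (with $f_{-1}=1$), since the Dehn--Sommerville equations take the clean symmetric form $h_i = h_{d-i}$ for a simplicial $(d-1)$-sphere, and in particular for the boundary complex of a simplicial $d$-polytope. For a $\lfloor d/2\rfloor$-neighborly polytope every $j$-subset of the $n$ vertices with $j \le \lfloor d/2\rfloor$ is a face, so $f_{j-1} = \binom{n}{j}$ for $0 \le j \le \lfloor d/2\rfloor$; inverting the binomial transform on this range yields $h_i = \binom{n-d-1+i}{i}$ for $0 \le i \le \lfloor d/2\rfloor$. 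By the symmetry $h_i = h_{d-i}$ this pins down the entire $h$-vector (when $d$ is even the middle entry $h_{d/2}$ is determined directly from the $j = d/2$ neighborliness condition, consistently).

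Next I would transform back: from the relation above one gets $f_{k-1} = \sum_{i=0}^{d} \binom{d-i}{k-i} h_i$, so
\[
 f_k(P) = f_{(k+1)-1} = \sum_{i=0}^{d} \binom{d-i}{k+1-i}\, h_i .
\]
Now split the sum at the midpoint and use $h_{d-i} = h_i$ to fold the upper half onto the lower half. Writing $i' = d-i$ for the terms with $i > \lfloor d/2\rfloor$, the coefficient $\binom{d-i}{k+1-i}$ becomes $\binom{i'}{k+1-d+i'}$, so the two halves combine into $\sum_{i} \bigl(\binom{d-i}{k+1-i} + \binom{i}{k+1-d+i}\bigr) h_i$ over $0 \le i \le \lfloor d/2\rfloor$, with the caveat that when $d$ is even the central term $i = d/2$ gets counted in both halves and must be taken with weight $\tfrac12$ — which is exactly the starred-sum convention in the statement. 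Substituting $h_i = \binom{n-d-1+i}{i}$ gives the claimed formula.

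The main obstacle is not any single computation but making the bookkeeping of the two boundary cases airtight: the parity-of-$d$ split that produces the $\tfrac12$ factor, and the range of $i$ for which $h_i = \binom{n-d-1+i}{i}$ actually holds (one needs neighborliness to cover precisely $i \le \lfloor d/2\rfloor$, and then Dehn--Sommerville to extend, so the two mechanisms must meet exactly at the middle without overlap or gap). I would also want to double-check that the binomial-inversion step genuinely recovers $h_i = \binom{n-d-1+i}{i}$ from $f_{j-1} = \binom{n}{j}$ — this is a Vandermonde-type identity and is routine, but it is the one place where an off-by-one in the index of summation would silently corrupt the answer. Everything else (validity of Dehn--Sommerville for polytope boundaries, the $f$-to-$h$ correspondence) may be quoted as standard, and indeed follows the treatment in \cite{Ziegler.1998} referenced in the statement.
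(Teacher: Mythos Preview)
The paper does not actually prove this proposition; it is quoted from \cite{Ziegler.1998} as a known consequence of McMullen's proof of the Upper Bound Theorem, with the remark that the argument ``involves deeper results of polytope theory than we have at hands.'' Your outline via the $h$-vector, neighborliness forcing $h_i=\binom{n-d-1+i}{i}$ for $i\le\lfloor d/2\rfloor$, Dehn--Sommerville symmetry, and folding the inverse transform is exactly the standard route taken in the cited reference, and it is correct.
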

%
That the Upper Bound Theorem holds is known not only for polytopes, but also for several classes of simplicial complexes, see, e.g., \cite{Novik.2003} for a list of such classes.
Stanley proved that it holds for all simplicial spheres, i.e., simplicial complexes that have a triangulated sphere as geometric realization (\cite{Stanley.1975}, also see chapter II.3 in \cite{Stanley.1996}).
This means that
\[
 f_k(K) \leq f_k(C_d(n)) \text{ for } k=0,1,\ldots,d-1
\]
if $K$ is a simplicial complex with $||K|| \cong S^{d-1}$ and $f_0(K)=n$.
This really is a stronger statement than Theorem \ref{UBT} as not all triangulations of spheres are polytopal; actually most are not \cite{Kalai.1988}.
\section{A Lower Bound via Simplicial Polytopes}\label{simplicialpolytopes}
The last sections showed that simplicial polytopes yield examples of simplicial complexes which are embeddable into a certain Euclidean space and that the neighborly ones among them, in particular the cyclic polytopes, will give maximal $f$-vectors.
We will now apply these insights to Problem~\ref{dieFrage}. Recall that we are interested in the maximal number of $d$-simplices for a $d$-dimensional simplicial complex on $n$ vertices that embeds into $\R^{\dR}$ for some $d\leq \dR \leq 2d$.
Theorem~\ref{schlegel} and Proposition~\ref{simpNeighb} yield the following lower bound for this:

\begin{prop}\label{theLowerBound}
 Let $d, \dR \in \mathbb{N}$ such that $d \leq \dR \leq 2d$ and let $n \geq r+1$.
Then 
\[
  \max\left\{f_d(K)\;\middle|\; \dim(K)=d, |V(K)|=n, ||K|| \hookrightarrow \R^{\dR} \right\} \geq f_d(C_{\dR + 1}(n)) = \Omega(n^{\lceil\frac{\dR}{2}\rceil}).
 \]
These bounds can be attained by simplicial complexes that admit a linear embedding into $\R^{\dR}$.
For the case $d=r$, the same asymptotic lower bound also holds for embeddability into $S^d$:
\[
  \max\left\{f_d(K)\;\middle|\; \dim(K)=d, |V(K)|=n, ||K|| \hookrightarrow S^d \right\} \geq f_d(C_{d + 1}(n))= \Omega(n^{\lceil\frac{d}{2}\rceil}).
 \]
\end{prop}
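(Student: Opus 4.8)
The plan is to produce, for each admissible pair $(d,r)$, a single $d$-dimensional complex $K$ on $n$ vertices that embeds \emph{linearly} into $\R^r$ and already realizes (essentially) $f_d(C_{r+1}(n))$ many $d$-simplices. The natural candidate is the $d$-skeleton of the boundary of the cyclic polytope $C_{r+1}(n)$ with one facet deleted.

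First I would set $P \mathrel{\mathop:}= C_{r+1}(n)$. For $n>r+1$ this is a simplicial $(r+1)$-polytope on exactly $n$ vertices (the vertex count and the simplicial property follow from Gale's evenness condition as recorded above) and it is neighborly; the boundary case $n=r+1$ is trivial, since $(\Delta_r)^{\le d}$ is a subcomplex of the $r$-simplex and hence is linearly realized in $\R^r$ with $\binom{r+1}{d+1}$ $d$-simplices. Pick any facet $F$ of $P$. By Theorem~\ref{schlegel} there is a polytopal complex in $\R^{(r+1)-1}=\R^r$ combinatorially equivalent to $\mathcal{C}(\partial P)\setminus\{F\}$; since $P$ is simplicial this complex is a geometric simplicial complex, hence a geometric realization of the abstract complex $\mathcal{C}(\partial P)\setminus\{F\}$, i.e.\ a linear embedding of it into $\R^r$. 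I would then take $K \mathrel{\mathop:}= (\mathcal{C}(\partial P)\setminus\{F\})^{\le d}$. As a subcomplex of a linearly embeddable complex it embeds linearly into $\R^r$; it has dimension exactly $d$ (the $(r+1)$-polytope $P$ has $d$-faces because $r\ge d$, and deleting a single facet cannot destroy all of them), and it retains all $n$ vertices of $P$, because deleting the single face $F$ leaves every proper face of $F$ in the complex.

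Next I would count $f_d(K)$. The only face removed from $\mathcal{C}(\partial P)$ is $F$, of dimension $r$, so $f_d(K)=f_d(P)=f_d(C_{r+1}(n))$ whenever $d\le r-1$; for $d=r$ the same argument gives $f_d(K)=f_d(C_{r+1}(n))-1$, costing only an additive constant. To extract the asymptotics I would apply Proposition~\ref{simpNeighb} with ambient dimension $r+1$ and $k=d$: the term of largest degree in $n$ is the one with index $i=\lfloor\frac{r+1}{2}\rfloor=\lceil\frac r2\rceil$, whose coefficient $\binom{(r+1)-i}{d+1-i}+\binom{i}{d+1-(r+1)+i}$ is positive — the first binomial is nonzero precisely because $r\ge d$ forces $0\le d+1-i\le (r+1)-i$, and $i\le d$ because $r\le 2d$ — while $\binom{n-(r+1)-1+i}{i}$ is a polynomial of degree $i=\lceil r/2\rceil$ in $n$ with positive leading coefficient $1/i!$ (the even-$D$ halving of the top term only multiplies it by $\tfrac12$). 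Hence $f_d(C_{r+1}(n))=\Theta(n^{\lceil r/2\rceil})$, which yields the stated $\Omega(n^{\lceil r/2\rceil})$ bound. Finally, for $d=r$ with target $S^d$: the full boundary complex $\mathcal{C}(\partial C_{d+1}(n))$ is a $d$-complex on $n$ vertices whose polyhedron is homeomorphic to $S^d$, so it embeds into $S^d$ with no deletion, and it has exactly $f_d(C_{d+1}(n))=\Omega(n^{\lceil d/2\rceil})$ top-dimensional simplices.

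\textbf{Main obstacle.} There is no topological difficulty here — embeddability is handed to us by the Schlegel-diagram theorem. The work is bookkeeping: confirming the vertex count and face structure of $C_{r+1}(n)$ (supplied above), verifying that deleting a facet removes no lower-dimensional faces, and — the fiddliest point — isolating the leading term of the face-number formula in Proposition~\ref{simpNeighb} and checking that its coefficient is genuinely positive for the full range $d\le r\le 2d$. One should also keep track of the harmless off-by-one in the case $d=r$ when the target is $\R^d$ rather than $S^d$.
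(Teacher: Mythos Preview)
Your proposal is correct and follows essentially the same route as the paper: take the $d$-skeleton of $\mathcal{C}(\partial C_{r+1}(n))$ (minus a facet when $d=r$), invoke the Schlegel-diagram theorem for the linear embedding into $\R^r$, and read off the asymptotics from Proposition~\ref{simpNeighb}; for the $S^d$ claim, use the full boundary complex. If anything, you are more careful than the paper in two places: you treat the boundary case $n=r+1$ explicitly (the cyclic polytope is only defined for $n>r+1$), and you actually verify that the top-index coefficient in the face-number formula is positive across the range $d\le r\le 2d$, whereas the paper simply asserts the $\Omega(n^{\lceil r/2\rceil})$ conclusion.
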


\begin{proof}
 First, consider the case $\dR > d$.
 The results of the last two sections show that the complex $K = \mathcal{C}(\partial C_{\dR + 1}(n))^{\leq d}$ is a $d$-dimensional simplicial complex
 on $n$ vertices which, by Theorem~\ref{schlegel}, admits a linear embedding into $\R^{\dR}$.

 By Proposition \ref{simpNeighb} the number of $d$-simplices of $K$ is
 \[
 f_d(K) = \sideset{}{^*}\sum_{i=0}^{\frac{\dR + 1}{2}} \left( \ueber{\dR + 1-i}{d+1-i} + \ueber{i}{d-\dR +i} \right)
 \ueber{n -\dR -2 +i}{i} = \Omega(n^{\lceil\frac{\dR}{2}\rceil}).
 \]
 For the case $r=d$ consider $K = \mathcal{C}(\partial C_{d + 1}(n)) \setminus F$ for some facet $F$ of $C_{d + 1}(n)$.
 $K$ embeds linearly into $\R^d$ and has 
 \[
  f_d(K) = \sideset{}{^*}\sum_{i=0}^{\frac{d + 1}{2}} 2 \ueber{n -d -2 +i}{i} - 1 = \Omega(n^{\lceil\frac{d}{2}\rceil}) 
 \]
 $d$-simplices.
The complex $\mathcal{C}(\partial C_{d + 1}(n))$ embeds into $S^d$ and has one more simplex.
\end{proof}

Note that Conjecture \ref{Conj3} together with Proposition \ref{theLowerBound} would imply that $f_d(C_{\dR + 1}(n))$ is the solution to Problem \ref{dieFrage}.
As the complexes yielding the lower bound in Proposition \ref{theLowerBound} embed linearly and any upper bound for the problem on topological embeddability also applies for the linear version of the question, this would also mean that the answer to Problem \ref{dieFrage} doesn't change when considered for linear or PL embeddable complexes.

In Section \ref{size} we considered Problem \ref{dieFrage} for several low values of $d$ and~$\dR$. For $\dR=3$ and $d\in\{2,3\}$, we already referred to the cyclic polytope $C_4(n)$ as an example attaining the presented upper bounds. Now, we can see that it actually has
\begin{eqnarray*}
 f_2(C_4(n)) & = & \sideset{}{^*}\sum_{i=0}^{\frac{4}{2}} \left( \ueber{4-i}{3-i} + \ueber{i}{i-1} \right) \ueber{n-5+i}{i}\\
& = & \ueber{4}{3} + \left(\ueber{3}{2} + \ueber{1}{0}\right) \ueber{n-4}{1} + \frac{1}{2} \left( \ueber{2}{1} + \ueber{2}{1} \right) \ueber{n-3}{2}\\
& = & 4 + 4 (n-4) + (n-3)(n-4)\\
& = & n(n-3)
\end{eqnarray*}
faces of dimension $2$ and
\begin{eqnarray*}
 f_3((C_4(n)) & = & \sideset{}{^*}\sum_{i=0}^{\frac{4}{2}} 2
 \ueber{n -5 +i}{i}\\
& = & 2 \left( \ueber{n-5}{0} + \ueber{n-4}{1} + \frac{1}{2} \ueber{n-3}{2}\right)\\
& = & \frac{4 + 4(n-4) + (n-3)(n-4)}{2}\\
& = & \frac{n(n-3)}{2}
\end{eqnarray*}
$3$-simplices.

As remarked, the case $d=2$ and $\dR = 4$ seems to be the simplest that is yet unsolved. Here we consider the complex $\mathcal{C}(\partial C_{5}(n))^{\leq 2}$ for which we can calculate in a similar fashion that it has $2(n^2 - 6n + 10) = 4 \ueber{n}{2} -4n +10$ triangles, a fact we already used when discussing the different conjectures concerning Problem \ref{dieFrage}.
For this case, we thus get a lower bound of $\Omega(n^2)$. The trivial upper bound given by the complete complex in this case is $O(n^3)$.
In the forthcoming sections we will see that it is not possible to push up the lower bound by adding further \mbox{$2$-simplices} to $\mathcal{C}(\partial C_{5}(n))^{\leq 2}$ while preserving embeddability into $\R^4$. Furthermore, the chapter on upper bounds will present an upper bound of $O(n^{3-\frac{1}{9}})$ which slightly improves the trivial bound but is still far from this lower bound.

For the more general case $\dR=2d$ the complex $\mathcal{C}(\partial C_{2d + 1}(n))^{\leq d}$ yields a lower bound of $\Omega(n^d)$.
For all these cases we will show in the remainder of this part on lower bounds that the examples given by neighborly simplicial polytopes cannot be improved in the fashion described above.

The Upper Bound Theorem shows that this is the best lower bound we can achieve by considering simplicial polytopes, and also simplicial spheres in general, as examples of embeddable complexes.
\section{Improving the Lower Bound (for the Case \texorpdfstring{$\dR=2d$}{})?}\label{improveLowerBound}

In the previous section we attained a lower bound for the maximal number of faces of a complex that is embeddable into a certain Euclidean space by presenting the boundary complex of the cyclic polytope as an example of such a complex.
Now we want to study whether this bound can be improved by adding further simplices to the complex $\mathcal{C}(\partial C_{\dR + 1}(n))^{\leq d}$ without adding new vertices and such that it still stays embeddable into $\R^{\dR}$.

For the cases $\dR = 2d$ we will show that this is not possible. First, we will present a theorem that considers this situation in a more general context and gives the negative answer for our case. Then we will try to see more directly why the embeddability is destroyed when adding an additional simplex, by looking for subcomplexes and minors of which we know that they are not embeddable.

The methods we will you use do not apply for the cases where $\dR < 2d$, hence the question remains to be answered for these cases.

\subsection{Adding Missing Faces to Skeleta of PL-Spheres}


Consider the following theorem by Nevo and Wagner:

\begin{thm}[{Nevo, Wagner~\cite[Theorem 1.2]{Nevo.2008}}]\label{NevoWagnerThm}
 Let $S$ be a piecewise linear $2d$-sphere and $M$ a missing $d$-face of $S$.
 Then $S^{\leq d} \cup \{M\}$ does not embed into $S^{2d}$.
\end{thm}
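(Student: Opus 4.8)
The plan is to combine the Van Kampen--Flores theorem (Theorem~\ref{vK-Fl}) with a deletion/contraction-type argument on the sphere $S$, reducing the general PL-sphere case to the model situation of the boundary of a $(2d+1)$-simplex. The point is that $S^{\leq d}\cup\{M\}$ should contain, after a suitable subdivision, a subcomplex that behaves like $(\Delta_{2d+2})^{\leq d}$, the known non-embeddable complex.

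First I would recall that $S$, being a PL $2d$-sphere with a missing $d$-face $M=\{v_0,\dots,v_d\}$, has a well-defined ``antipodal'' structure only in special cases, so instead I would exploit the following: a missing $d$-face $M$ of $S$ together with the $(d-1)$-skeleton forces, via Alexander duality on $S^{2d}\cong\partial\Delta_{2d+1}$-like reasoning, the existence of a complementary $d$-cycle. Concretely, the key step is to show that in any continuous map $f\colon \|S^{\leq d}\cup\{M\}\| \to S^{2d}$, the image of $M$ must cross the image of some $d$-sphere $Z$ sitting inside $S^{\leq d}$ that ``links'' $M$ in $S$. Because $S$ is a PL sphere and $M$ is missing, the link of $\partial M$ in $S$ contains such a $d$-cycle $Z$ with $[Z]$ dual to $M$; then $f(M)$ and $f(Z)$ would have to intersect by a Borsuk--Ulam / van Kampen--Flores style deleted-join argument (the relevant obstruction is the mod-$2$ linking number, which is forced to be nonzero by the combinatorics of $S$). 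Since $M$ and $Z$ are disjoint as simplices of $S^{\leq d}\cup\{M\}$, this contradicts $f$ being an embedding.

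The technical heart is producing $Z$ and showing $f(M)\cap f(Z)\neq\emptyset$ for \emph{every} continuous $f$, not just PL or generic ones. I would set this up using the deleted join: form $(S^{\leq d}\cup\{M\})^{*2}_\Delta$ and the $\mathbb{Z}_2$-equivariant map induced by $f$ into $(S^{2d})^{*2}_\Delta \simeq S^{4d+1}$ with the antipodal action; the obstruction to equivariance living on the $M * Z$ part of the deleted join is exactly a $\mathbb{Z}_2$-cohomology class in degree $2d+1$, and one computes, using that $M$ is missing from the PL sphere $S$, that this class is nonzero. This is where PL-ness of $S$ is genuinely used: it guarantees, via a subdivision, that the relevant chain-level computation of the linking obstruction can be carried out combinatorially and matches the one for $\partial\Delta_{2d+1}$. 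The passage to $S^{2d}$ rather than $\R^{2d}$ is harmless here since we are working on the sphere directly.

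The main obstacle I anticipate is precisely the identification of the dual cycle $Z$ and the verification that the linking obstruction is nonzero in the generality of an arbitrary PL $2d$-sphere --- the ``obvious'' candidate $Z$ (the boundary of a dual cell in some CW structure) need not be a simplicial subcomplex of $S^{\leq d}$, so some care with subdivisions is required, and one must check that subdividing $S$ does not change the problem (it does not, since a missing face stays missing and embeddability of skeleta plus one face is a PL-invariant). An alternative, possibly cleaner route would be to argue by contradiction topologically: an embedding of $S^{\leq d}\cup\{M\}$ into $S^{2d}$ would, by an innermost-sphere / general position argument on the complementary regions, let one ``fill in'' enough of $S$ to embed a complex known to be non-embeddable by Theorem~\ref{vK-Fl} or Theorem~\ref{joins of nice complexes}; I would try this second approach first if the equivariant computation turns out to be unwieldy.
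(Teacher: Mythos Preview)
The paper does not contain a proof of this theorem. Theorem~\ref{NevoWagnerThm} is quoted verbatim from Nevo and Wagner~\cite[Theorem 1.2]{Nevo.2008} and used as a black box: after stating it, the paper only checks that the hypotheses apply to $\mathcal{C}(\partial C_{2d+1}(n))$ and records the resulting corollaries. There is therefore nothing in the paper to compare your proposal against.

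For what it is worth, your sketch is in the right spirit for how Nevo and Wagner actually argue: their proof uses the van Kampen obstruction. In a PL $2d$-sphere any missing $d$-face $M$ bounds a $(d+1)$-ball whose boundary sphere lies in $S^{\leq d}$, and the complementary ball furnishes a dual $d$-cycle linking $M$; one then shows the relevant obstruction class is nonzero. Your description of this via the deleted join and a linking cycle $Z$ is a reasonable outline, though the passages you flag as ``the technical heart'' and ``the main obstacle'' are exactly where the real work sits and your write-up does not carry them out. The alternative ``fill in enough of $S$'' approach you mention at the end is vaguer and would need substantial justification. If you want the details, consult \cite{Nevo.2008} directly; the present paper does not supply them.
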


To see why this theorem captures our situation, we first have to explain some of the terms that appear in the theorem.

\begin{itemize}
 \item Let $M \subseteq V(K)$ be a $(k+1)$-subset of the vertices of a simplicial complex $K$.
We call $M$ a \emph{missing $k$-face} of $K$ if $M$ is not contained in $K$ but its boundary is: $M \notin K$, $\partial M \subseteq K$.

 \item Two simplicial complexes are \emph{piecewise linearly homeomorphic (PL-homeo\-morphic)}  if they have subdivisions that are isomorphic as simplicial complexes.
A \emph{piecewise linear $m$-sphere (PL-$m$-sphere)} is then a simplicial complex $S$ that is piecewise linearly homeomorphic to $\partial\Delta^{m+1}$.

A \mbox{PL-$m$-sphere} $S$ thus has a subdivision that is isomorphic to some subdivision of $\partial\Delta^{m+1}$.
(See \cite{Lickorish.1999} for an introduction to piecewise linear topology.)
\end{itemize}

Let us now check whether we have the right ingredients for Theorem \ref{NevoWagnerThm}:
The complex $\mathcal{C}(\partial C_{2d + 1}(n))$ is a piecewise linear $2d$-sphere because any simplicial subdivision of the boundary of an $m$-polytope is PL-homeomorphic to $\partial\Delta^{m}$ (e.g., \cite[Lemma 4.2]{Lickorish.1999}) and thus a PL-$(m-1)$-sphere.

Furthermore $C_{2d + 1}(n)$ is $d$-neighborly, which means that its boundary complex contains all possible $(d-1)$-simplices.
So any $(d+1)$-subset of $[n]$ that is not the vertex set of a $d$-simplex of $\mathcal{C}(\partial C_{2d + 1}(n))$ is a missing $d$-face.

And, as mentioned before, for a $d$-complex embeddability into $S^{2d}$ is equivalent to embeddability into $\R^{2d}$.
Therefore, this theorem yields that the complex $\mathcal{C}(\partial C_{2d + 1}(n))^{\leq d} \cup \{M\}$ does not admit an embedding
into $\R^{2d}$ for any $(d+1)$-set $M \subseteq [n]$ that is not the vertex set of a $d$-face of $C_{2d + 1}(n)$.

This reasoning obviously works for any simplicial neighborly polytope:
\begin{cor}
 Let $P$ be a simplicial neighborly $(2d+1)$-polytope and let $M$ be a set of $d+1$ vertices of $P$
 that does not correspond to a $d$-face of $P$.
 Then $\mathcal{C}(\partial P)^{\leq d} \cup \{M\}$ does not embed into $\R^{2d}$.
\end{cor}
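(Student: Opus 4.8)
The plan is simply to check that $\mathcal{C}(\partial P)$ and $M$ meet the hypotheses of Theorem~\ref{NevoWagnerThm}; the corollary is then immediate, the argument being verbatim the one carried out above for the special case $P=C_{2d+1}(n)$. So the whole content is to translate the statement into the language of that theorem.

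First I would verify that $\mathcal{C}(\partial P)$ is a PL $2d$-sphere. Since $P$ is a simplicial $(2d+1)$-polytope, $\mathcal{C}(\partial P)$ is a genuine simplicial complex, and by the cited fact that any simplicial subdivision of the boundary of an $m$-polytope is PL-homeomorphic to $\partial\Delta^{m}$ (\cite[Lemma 4.2]{Lickorish.1999}), $\mathcal{C}(\partial P)$ is PL-homeomorphic to $\partial\Delta^{2d+1}$, i.e., a PL-$2d$-sphere. Next I would check that $M$ is a missing $d$-face of this sphere. A neighborly $(2d+1)$-polytope is $\lfloor\frac{2d+1}{2}\rfloor=d$-neighborly, so every $d$-subset of its vertex set --- in particular every $d$-subset of $M$ --- spans a $(d-1)$-face of $P$ and hence lies in $\mathcal{C}(\partial P)^{\leq d}$. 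Therefore $\partial M\subseteq\mathcal{C}(\partial P)$, while $M\notin\mathcal{C}(\partial P)$ by the assumption that $M$ is not a $d$-face of $P$; thus $M$ is a missing $d$-face in the sense defined before Theorem~\ref{NevoWagnerThm}.

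Applying Theorem~\ref{NevoWagnerThm} with $S=\mathcal{C}(\partial P)$ then gives that $\mathcal{C}(\partial P)^{\leq d}\cup\{M\}$ does not embed into $S^{2d}$. Finally, this complex is $d$-dimensional and $d<2d$ (recall $d\geq 1$), so by the discussion in Section~\ref{size} embeddability into $S^{2d}$ is equivalent to embeddability into $\R^{2d}$; hence $\mathcal{C}(\partial P)^{\leq d}\cup\{M\}$ does not embed into $\R^{2d}$, as claimed.

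There is essentially no obstacle beyond matching definitions, since Theorem~\ref{NevoWagnerThm} is used as a black box. The one point deserving a word of care is that ``neighborly'' for a $(2d+1)$-polytope means exactly $d$-neighborly, which is precisely what guarantees $\partial M\subseteq\mathcal{C}(\partial P)$ and hence makes $M$ an admissible missing $d$-face; without neighborliness the construction would not even produce a missing face.
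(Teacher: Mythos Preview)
Your proposal is correct and follows exactly the paper's own argument: the paper verifies the hypotheses of Theorem~\ref{NevoWagnerThm} (PL $2d$-sphere via \cite[Lemma 4.2]{Lickorish.1999}, missing $d$-face via $d$-neighborliness, and the $S^{2d}$/$\R^{2d}$ equivalence) for the cyclic polytope and then remarks that ``this reasoning obviously works for any simplicial neighborly polytope.'' You have spelled out precisely that reasoning.
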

Of course this doesn't mean that one could not get a bigger complex than $\mathcal{C}(\partial C_{2d + 1}(n))^{\leq d}$ by starting off
with a non-neighborly simplicial polytope (or simplicial sphere) with less $d$-faces and adding ``non-missing'' non-faces to it.

Since not all simplicial spheres are piecewise linear (e.g., \cite{Lickorish.1999}, p.302), the most general statement we can achieve for our purposes with similar reasoning is the following:
\begin{cor}
Let $K$ be a $d$-dimensional simplicial complex on $n$ vertices that admits an embedding into $\R^{2d}$ and assume, furthermore, that there exists a subcomplex $K'\subseteq K$ with $V(K')=V(K)$ such that $K'=S^{\leq d}$ for some neighborly PL-$2d$-sphere $S$.
Then $K=K'=S^{\leq d}$ and $f_d(K) \leq f_d(C_{2d + 1}(n))$.
\end{cor}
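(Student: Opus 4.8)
The plan is to deduce $K=K'$ by contradiction, using the Nevo--Wagner theorem, and then to read off the face bound from Stanley's Upper Bound Theorem for simplicial spheres.

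First I would collect the elementary facts. Since $d\geq 1$ we have $d<2d$, so by the equivalence recorded just before these corollaries (equivalently, by the stereographic-projection argument of Section~\ref{size}) the complex $K$ embeds into $S^{2d}$, and every subcomplex of $K$ embeds into $S^{2d}$ as well. Next, ``neighborly'' for the PL-$2d$-sphere $S$ means $\lfloor 2d/2\rfloor=d$-neighborly: every subset of $V(S)$ of size at most $d$ is a face of $S$. Hence $K'=S^{\leq d}$ contains \emph{all} simplices of dimension $\leq d-1$ on the vertex set $V(S)$, and this vertex set equals $V(K')=V(K)$, so $|V(S)|=n$.

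Now suppose $K\neq K'$. As $K$ is $d$-dimensional while $K'$ already contains every simplex of dimension $<d$ on $V(K)$, any simplex in $K\setminus K'$ must be a $d$-simplex $M$, i.e., a $(d+1)$-subset of $V(S)$. Since $M\notin K'=S^{\leq d}$, $M$ is not a face of $S$; on the other hand $d$-neighborliness of $S$ gives that every $d$-element subset of $V(S)$ is a face of $S$, so $\partial M\subseteq S^{\leq d-1}\subseteq K'$. Thus $M$ is a missing $d$-face of $S$, and $L:=S^{\leq d}\cup\{M\}$ is a subcomplex of $K$. By Theorem~\ref{NevoWagnerThm}, $L$ does not embed into $S^{2d}$; but $L\subseteq K$ and $K$ embeds into $S^{2d}$, so $L$ does embed into $S^{2d}$ --- a contradiction. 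Therefore $K=K'=S^{\leq d}$. For the last assertion, $S$ is a simplicial complex with $||S||\cong S^{2d}$ and $f_0(S)=n$, so Stanley's Upper Bound Theorem for simplicial spheres (applied with the polytope dimension taken to be $2d+1$) yields $f_k(S)\leq f_k(C_{2d+1}(n))$ for all $k$, in particular $f_d(S)\leq f_d(C_{2d+1}(n))$. Since $K=S^{\leq d}$ has the same $d$-simplices as $S$, we get $f_d(K)=f_d(S)\leq f_d(C_{2d+1}(n))$, as claimed.

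I do not expect a genuine obstacle here: the argument is an assembly of facts cited earlier (subcomplexes inherit embeddability, the $\R^{2d}\leftrightarrow S^{2d}$ equivalence for $d$-complexes, the Nevo--Wagner theorem, neighborliness, Stanley's Upper Bound Theorem). The one point deserving a line of care is verifying that each simplex of $K$ not in $K'$ is \emph{literally} a missing $d$-face of $S$ in the sense required by Theorem~\ref{NevoWagnerThm}: both ``$M\notin S$'' and ``$\partial M\subseteq S$'' are forced by $d$-neighborliness of $S$, and $S$ being a PL-$2d$-sphere is part of the hypothesis.
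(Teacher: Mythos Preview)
Your argument is correct and is precisely the ``similar reasoning'' the paper alludes to: use $d$-neighborliness of $S$ to see that any extra simplex of $K$ is a missing $d$-face, apply the Nevo--Wagner theorem to get a contradiction with embeddability into $S^{2d}$ (equivalently $\R^{2d}$), and then invoke Stanley's Upper Bound Theorem for simplicial spheres for the face bound. The paper does not write out a separate proof for this corollary, so your spelled-out version matches the intended argument exactly.
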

%
%
Note that neighborliness for simplicial complexes has exactly the same meaning as for polytopes.

In \cite{Nevo.2008} Nevo and Wagner conjecture that Theorem \ref{NevoWagnerThm} holds for general simplicial spheres. 
For our question, this would mean that no example coming from a neighborly simplicial sphere, also not the ones coming from non-\mbox{PL-spheres}, can be extended to yield a higher bound than $f_d(C_{2d + 1}(n))$.

For the other cases, when $\dR < 2d$, Theorem \ref{NevoWagnerThm} doesn't answer our question:
The complex $\mathcal{C}(\partial C_{\dR + 1}(n))$ is a piecewise linear $\dR$-sphere, but to apply Theorem~\ref{NevoWagnerThm} we first of all need $\dR$ to be even.
Furthermore we have to add some missing $\frac{\dR}{2}$-face by adding the additional $d$-simplex.

For even $\dR$, the complex $\mathcal{C}(\partial C_{\dR + 1}(n))$ is $\frac{\dR}{2}$-neighborly, which means that any $(\frac{\dR}{2}+1)$-subset of $[n]$ that is not the vertex set of a $\frac{\dR}{2}$-simplex is a missing $\frac{\dR}{2}$-face.
Thus, if the boundary of the $d$-simplex that we want to add to $\mathcal{C}(\partial C_{\dR + 1}(n))^{\leq d}$ does not contain all possible $\frac{\dR}{2}$-simplices, adding the $d$-simplex would also mean adding a missing $\frac{r}{2}$-face.
But for general even $\dR$, not all $d$-faces of $C_{\dR+1}(n)$ have this form: $C_5(n)$, for example, has missing 3-faces.

\subsection{An Elementary Approach}

Is it possible to see directly why adding an additional simplex makes the complex $\mathcal{C}(\partial C_{2d + 1}(n))^{\leq d}$ no longer embeddable into $\R^{2d}$?

One possibility would be to find in the resulting complex a subcomplex of which we know that it is not embeddable.
 In Section \ref{embeddings} we saw examples of non-embeddable complexes: the joins of nice complexes (Theorem \ref{joins of nice complexes}).
They will be studied more deeply in an upcoming section. 

Let us consider the case $d=2$.
In Section \ref{excludingJoins} we will see that there are only three possibilities for a join of nice complexes
where the dimension of the complex is $2$ and the dimension of the Euclidean space is $4$ (Lemma \ref{dimJoinNiceCompl}).
These complexes are 
\begin{itemize}
 \item $(\Delta_2^{\leq 0})^{*3}$ (the threefold join of the complex consisting of only three vertices, i.e., the join of the complete bipartite graph $K_{3,3}$ with the complex consisting of three vertices),
\item $\Delta_4^{\leq 1} * \Delta_2^{\leq 0}$ (the join of the complete graph $K_5$ with the complex consisting of three vertices) and
\item $\Delta_6^{\leq 2}$ (the $2$-skeleton of the $6$-simplex, i.e., all possible triangles on $7$ vertices).
\end{itemize}

\begin{figure}[ht]
\begin{center}
\includegraphics[scale=0.5]{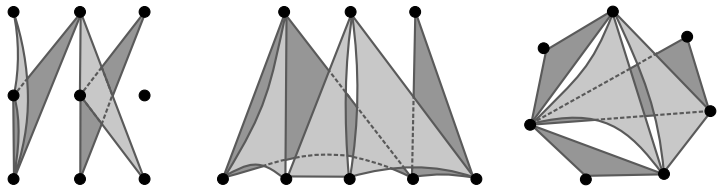}
\end{center}
\caption{Some exemplary $2$-simplices of the complexes $(\Delta_2^{\leq 0})^{*3}$, $\Delta_4^{\leq 1} * \Delta_2^{\leq 0}$ and $\Delta_6^{\leq 2}$.}
\end{figure}

We will now see that these do not have to appear when an additional \mbox{$2$-simplex} is added to the $2$-skeleton of a cyclic $5$-polytope. This shows that as subcomplexes the non-embeddable complexes from Theorem \ref{joins of nice complexes} do not yield a clearer picture of the non-embeddability of $\mathcal{C}(\partial C_{2d + 1}(n))^{\leq d} \cup \{M\}$ for a missing $d$-face $M$.

\begin{lem}\label{subcomplexes}
 There is a missing $2$-face $F$ of $C_5(9)$ such that the complex $\mathcal{C}(\partial C_5(9))^{\leq 2} \cup \{F\}$ contains none of the three complexes  $(\Delta_2^{\leq 0})^{*3}$, \mbox{$\Delta_4^{\leq 1} * \Delta_2^{\leq 0}$} and $\Delta_6^{\leq 2}$ as a subcomplex.
\end{lem}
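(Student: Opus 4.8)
The plan is to make everything explicit for one concrete $C_5(9)$ and then reduce the three non-containment claims to finite combinatorial checks about its missing $2$-faces. Concretely, I would fix the realization $C_5(9)=\conv\{\gamma(t_1),\dots,\gamma(t_9)\}$ with vertex set $[9]$. Since $C_5(9)$ is $2$-neighborly, $\mathcal{C}(\partial C_5(9))^{\leq 2}$ already contains all $\ueber{9}{2}$ edges, so whether $\mathcal{C}(\partial C_5(9))^{\leq 2}\cup\{F\}$ contains one of the three complexes as a subcomplex depends only on its set of triangles. By Proposition~\ref{simpNeighb} it has $f_2(C_5(9))=74$ triangles, hence exactly $\ueber{9}{3}-74=10$ missing $2$-faces, and a short computation with Gale's evenness condition (in the form describing all proper faces) identifies these ten sets as precisely the triples $\{a,b,c\}$ with $2\le a$, $b\ge a+2$, $c\ge b+2$ and $c\le 8$ — equivalently, the $3$-subsets of $\{2,\dots,8\}$ with pairwise non-adjacent elements, of which there are $\ueber{5}{3}=10$.

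Next I would observe that the choice of $F$ among these ten sets only matters through the one extra triangle it contributes. Each of $(\Delta_2^{\leq 0})^{*3}$, $\Delta_4^{\leq 1}*\Delta_2^{\leq 0}$ and $\Delta_6^{\leq 2}$ is a join of nice complexes for which $\sum_i n_i-s-2=4$ (for $\Delta_6^{\leq 2}$ this is Theorem~\ref{vK-Fl} itself), so by Theorem~\ref{joins of nice complexes} none of them embeds into $\R^4$; since $\mathcal{C}(\partial C_5(9))^{\leq 2}$ does embed into $\R^4$ by Proposition~\ref{theLowerBound}, it already contains none of them. Hence a copy of one of the three complexes inside $\mathcal{C}(\partial C_5(9))^{\leq 2}\cup\{F\}$ must use the triangle $F$ as one of its $2$-simplices. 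I would then take $F=\{2,4,8\}$, which is one of the ten missing $2$-faces, and dispose of the three complexes one at a time.

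For $\Delta_6^{\leq 2}$: a copy using $F$ lives on a $7$-set $U\supseteq\{2,4,8\}$ all of whose $\ueber{7}{3}$ triples lie in $\mathcal{C}(\partial C_5(9))^{\leq 2}\cup\{F\}$, so $U$ contains at most one missing $2$-face, which must be $F$; but the $\ueber{6}{2}=15$ seven-subsets containing $\{2,4,8\}$ each turn out to contain at least two missing $2$-faces, so no such $U$ exists. For $(\Delta_2^{\leq 0})^{*3}$: a copy using $F$ is a partition of $[9]$ into three triples in which every ``rainbow'' triple (one vertex per block) is present, i.e.\ $F$ is the only rainbow triple among the ten missing $2$-faces; but $\{2,4,8\}$ rainbow forces $2$, $4$, $8$ into three distinct blocks, and then wherever the vertex $6$ goes, one of $\{2,4,6\}$, $\{2,6,8\}$, $\{4,6,8\}$ — all of them missing $2$-faces — becomes rainbow as well, a contradiction. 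For $\Delta_4^{\leq 1}*\Delta_2^{\leq 0}$: a copy using $F$ is an $8$-set $W\supseteq\{2,4,8\}$ with a partition $W=A'\sqcup B'$, $|A'|=5$, $|B'|=3$, in which every triple having two vertices in $A'$ and one in $B'$ is present, so $F$ is the only missing $2$-face splitting this way; using that $\{2,4,6\}$, $\{2,4,7\}$, $\{2,4,8\}$ share the pair $\{2,4\}$ while $\{2,6,8\}$, $\{3,6,8\}$, $\{4,6,8\}$ share the pair $\{6,8\}$, a short case distinction over which two of $\{2,4,8\}$ lie in $A'$ and which vertex is omitted from $W$ shows that one of the other nine missing $2$-faces is always forced to split as ``two in $A'$, one in $B'$'' too.

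The main work, and the place where errors could creep in, is the explicit identification of the ten missing $2$-faces from Gale's condition and the last finite check for $\Delta_4^{\leq 1}*\Delta_2^{\leq 0}$, which carries the most freedom (the omitted vertex together with the $5{+}3$ split). What makes those cases collapse quickly is the numerical coincidence that the pair $\{2,4\}$ lies in three of the ten missing $2$-faces and likewise the pair $\{6,8\}$; I expect everything else to be routine bookkeeping.
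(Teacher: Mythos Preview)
Your plan is correct and follows essentially the same route as the paper: compute the ten missing $2$-faces of $C_5(9)$ via Gale's evenness condition (the paper obtains the identical list $\{246,247,248,257,258,268,357,358,368,468\}$, which is exactly your description as the pairwise non-adjacent triples in $\{2,\dots,8\}$), choose $F=\{2,4,8\}$, and then rule out each of the three complexes by a finite case check using those missing faces.

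The only noteworthy differences are organizational. For $(\Delta_2^{\leq 0})^{*3}$ your argument and the paper's are identical (vertex $6$ cannot go into any block because $246$, $268$, $468$ are all missing). For $\Delta_6^{\leq 2}$ the paper argues more directly than your ``check all $15$ seven-sets'': since $246$ and $247$ are missing, neither $6$ nor $7$ can be in the seven-set, which forces $U=\{1,2,3,4,5,8,9\}$, and then $358$ is a second missing face. For $\Delta_4^{\leq 1}*\Delta_2^{\leq 0}$ the paper again first eliminates vertex $6$ (exactly as in the other two cases: if $6$ were used it would have to form a simplex with two of $2,4,8$, but $246$, $268$, $468$ are missing), which pins down the eight vertices as $\{1,2,3,4,5,7,8,9\}$ and reduces the case distinction to whether $2\in S_5$ or $2\in S_3$; your split by which pair of $\{2,4,8\}$ lies in $A'$ together with the omitted vertex works too, but reusing the ``vertex $6$'' observation across all three cases is what makes the paper's bookkeeping shorter.
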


\begin{proof}
 Let us first consider which sets $\{i,j,k\} \subset [9]$ with $i<j<k$ are not a vertex set of a $2$-face of $C_5(9)$. By Proposition \ref{faces} this is the case if and only if there is no facet of $C_5(9)$ that contains the vertices $i, j$ and $k$.

By Gale's Evenness Condition a set $S\subset[9]$ with $|S| = 5$ corresponds to a facet of $C_5(9)$ if and only if for all $j_1,j_2 \in ([9] \setminus S)$ with $j_1<j_2$ the number of $l \in S$ between $j_1$ and $j_2$ is even.
This means that all maximal intervals in $S$ that contain neither $1$ nor $9$ have even length, where ``maximal interval'' refers to a set $\{a,a+1,\ldots,a+k\}\subset S$ for which neither $\{a,a+1,\ldots,a+k+1\}$ nor $\{a-1,a,\ldots,a+k\}$ are subsets of $S$.

We will now check when a set $\{i,j,k\} \subset [9]$ with $i<j<k$ can be extended to such a $5$-set $S\subset[9]$.
Let us for the moment call an element $l$ of a set $S \subset [9]$ \emph{isolated} if $l\neq 1$, $l\neq 9$ and $l-1, l+1 \notin S$.
If there is no isolated vertex in $\{i,j,k\}$, it is possible to add two additional vertices such that the resulting set satisfies the above criterion for a facet.
Also if only one or two of $i$, $j$ and $k$ are isolated, we can add neighbors of the isolated vertex or vertices to create a facet.
These cases occur precisely if $i=1$ or $k=9$ or at least two of $i,j,k$ are adjacent.

If all three vertices $i$, $j$ and $k$ are isolated, we would need at least three additional vertices to make all maximal intervals not containing $1$ or $9$ have even length.
This shows that $\{i,j,k\}$ is not a $2$-face if and only if $i$, $j$ and $k$ are isolated, i.e., if $i \neq 1, k \neq 9$ and $i+1 < j < k-1$.

The complete list of missing 2-faces of $C_5(9)$ is therefore (with $\{i,j,k\}$ written as $ijk$):
\[
 246, 247, 248, 257, 258, 268, 357, 358, 368, 468
.\]
Remember that every set $\{i,j,k\}$ that does not correspond to a $2$-face of our polytope is a missing $2$-face as the neighborly polytope $C_5(9)$ has a complete $1$-skeleton.

\begin{figure}[ht]
\begin{center}
\psfrag{1}{1}
\psfrag{2}{2}
\psfrag{3}{3}
\psfrag{4}{4}
\psfrag{5}{5}
\psfrag{6}{6}
\psfrag{7}{7}
\psfrag{8}{8}
\psfrag{9}{9}
\includegraphics[scale=0.6]{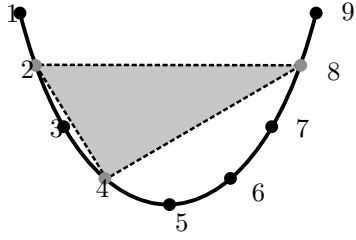}
\end{center}
\caption{A sketch of the vertices of $C_5(9)$ and the missing triangle $248$.}
\end{figure}

We will now show that $\mathcal{C}(\partial C_5(9))^{\leq 2} \cup \{248\}$ does not contain any of the three non-embeddable subcomplexes:
\begin{description}
\item[$(\Delta_2^{\leq 0})^{*3}:$]
 The nine vertices of this complex are partitioned into three sets of three vertices each, such that the complex contains precisely those $3$-sets that have one vertex in each of the three sets.

 For $248$ to be one of the simplices, each of the three sets would hence have to contain one of the vertices $2,4$ and $8$. 
 In which of the sets could we then put the vertex $6$? (We have to use all vertices!)
 Because $246, 468$ and $268$ are missing, $6$ cannot be contained in any of the three sets.
\item[$\Delta_4^{\leq 1} * \Delta_2^{\leq 0}:$]
 This complex has eight vertices, partitioned into one set $S_5$ of five
 and another set $S_3$ of three vertices.
 The simplices are exactly the sets that contain two vertices in $S_5$ and one in $S_3$.
 
 We thus have to have $|\{2,4,8\} \cap S_5| = 2$ and $|\{2,4,8\} \cap S_3| = 1$.

 The vertex $6$ cannot be a vertex of the subcomplex: If it was, it would have to form a simplex with two of the vertices $2,4$ and $8$.
 The missing faces $246, 468$ and $268$ prevent this.

 This means we have to use all other vertices: $1,2,3,4,5,7,8$ and $9$.
 We distinguish two cases:
\begin{description}
 \item[$2 \in S_5:$]
 Because $258$ is missing, $5$ has to be in the same partition set as $8$.
 The same is true for $4$ and $7$ ($247$). This means that $5$ and $7$ are not in the same set ($4$ and $8$ are not!).
 So $257$ would have to be a simplex!
 \item[$2 \in S_3:$]
 Then $4,8 \in S_5$ and, because of the missing faces $258$ and $247$, we would have $5,7 \in S_3$.
 This would mean that $S_3 = \{2,5,7\}$ and thus $3 \in S_5$. But then $358$ would have to be a simplex!
\end{description}
 \item[$\Delta_6^{\leq 2}:$]
 To find this complex as a subcomplex of $C_5(9)$, we have to choose four further vertices such that $248$ is the only missing $2$-face on these seven vertices. Because $246$ and $247$ are missing, we cannot pick $6$ or $7$.
 This leaves us with the seven vertices $1,2,3,4,5,8$ and $9$. But $358$ is also not a face of $C_5(9)$.
\end{description}
This shows that $\mathcal{C}(\partial C_5(9))^{\leq 2} \cup \{248\}$ does not contain any of the three complexes $(\Delta_2^{\leq 0})^{*3}$, $\Delta_4^{\leq 1} * \Delta_2^{\leq 0}$ and $\Delta_6^{\leq 2}$ as a subcomplex.
\end{proof}

So we see that, when trying to understand why $\mathcal{C}(\partial C_{2d + 1}(n))^{\leq d} \cup \{M\}$ for a missing $d$-face $M$ is not embeddable, it doesn't suffice to consider the non-embeddable complexes from Theorem \ref{joins of nice complexes} as subcomplexes.

We can, however, use a concept of minors for simplicial complexes to gain more insight into the non-embeddability of these complexes.
As already mentioned in Section \ref{minors}, Nevo introduces a notion of minors for finite simplicial complexes and explores their connection with embeddability in \cite{Nevo.2007}.

This is his definition of a minor:\label{defMinor}
\begin{Def}[minor]\label{definitionMinor}
Let $L$ and $K$ be simplicial complexes.
\begin{itemize}
\item If $L$ is a subcomplex of $K$, 
the operation of replacing $K$ by $L$ is called a \emph{deletion}.
\item If $L$ is obtained from $K$ by identifying two distinct vertices that are not contained in any missing $k$-face
of $K$ with $k \leq \dim(K)$, 
the operation of replacing $K$ by $L$ is called an \emph{admissible contraction}. See Figure \ref{admContr} for an example.
If $u,v \in V(K)$ are identified, the resulting complex is 
\[
 L = \left\{F \in K \;\middle|\; u \notin F\right\} \cup \left\{F  \cup \{v\} \;\middle|\; F \cup \{u\} \in K\right\}.
\]
\end{itemize}
We call $L$ a \emph{minor} of $K$ if $L$ can be obtained from $K$ by a (finite) sequence of admissible contractions and deletions.
We denote this situation by $L < K$.
\end{Def}

\begin{figure}[ht]
\begin{center}
\includegraphics[scale=0.45]{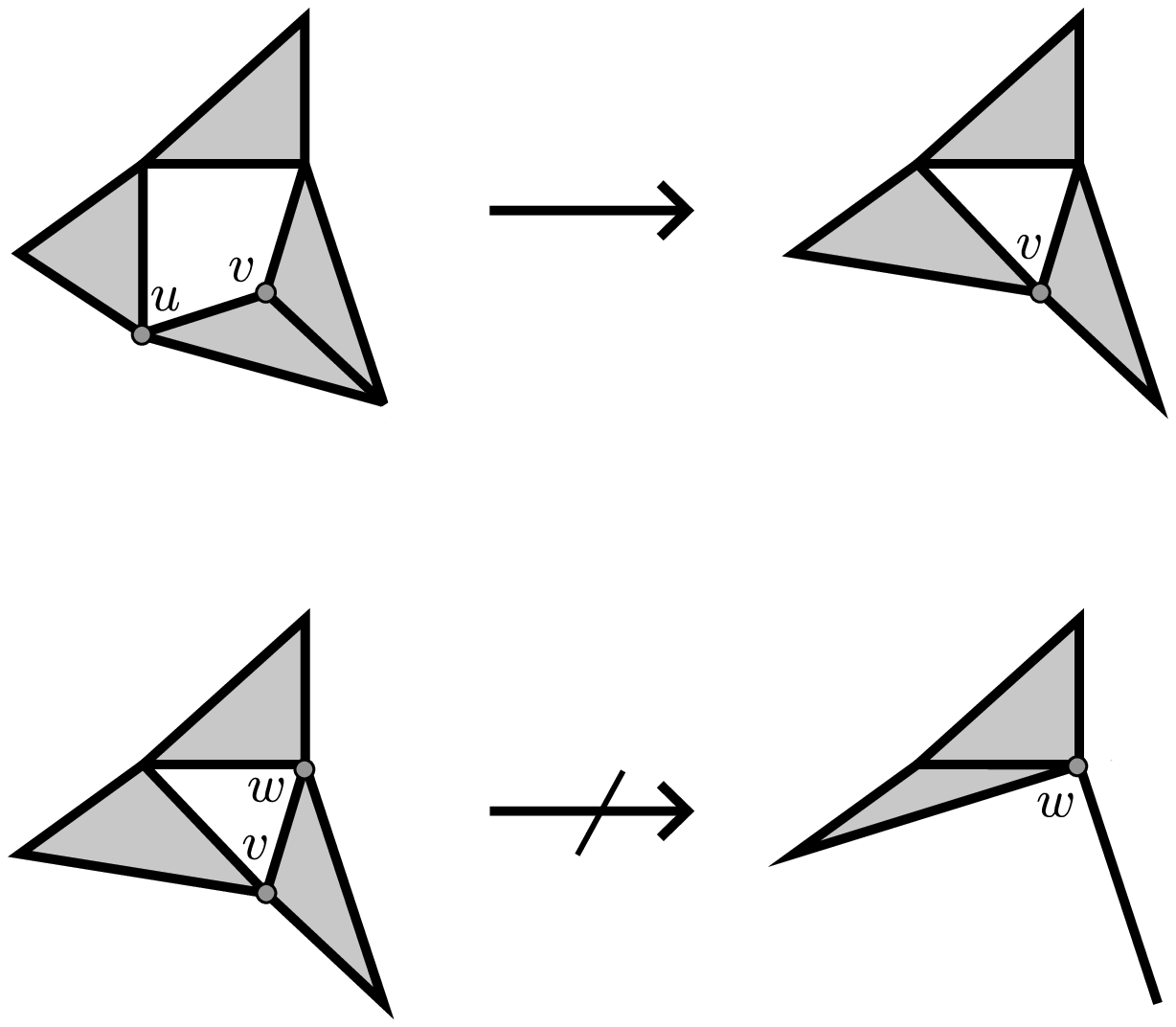}
\end{center}
\caption{An admissible and a non-admissible contraction.}\label{admContr}
\end{figure}

Nevo proves, among other things, the following statement about the $d$-skeleton of the $(2d+2)$-simplex which, by the Theorem of van Kampen and Flores (Theorem \ref{vK-Fl}), does not allow an embedding into $\R^{2d}$ and is also contained in the class of non-embeddable complexes of Theorem \ref{joins of nice complexes}.
\begin{thm}[{Nevo~\cite[Corollary 1.2]{Nevo.2007}}]
 For $d \geq 1$ let $K$ be a simplicial complex such that $(\Delta_{2d+2})^{\leq d}<K$ .
 Then $K$ cannot be embedded into $\R^{2d}$. 
\end{thm}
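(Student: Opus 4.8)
The plan is to obtain this from two inputs: the Van Kampen--Flores Theorem (Theorem~\ref{vK-Fl}), which says that $(\Delta_{2d+2})^{\leq d}$ itself does not embed into $\R^{2d}$, together with the fact that embeddability into $\R^{2d}$ is \emph{monotone under taking minors}. Granting the latter, the corollary is immediate: an embedding of $||K||$ into $\R^{2d}$ would, under the sequence of deletions and admissible contractions witnessing $(\Delta_{2d+2})^{\leq d}<K$, give rise to an embedding of $(\Delta_{2d+2})^{\leq d}$, contradicting Theorem~\ref{vK-Fl}. (For $d=1$ this specializes to the ``$K_5$ a minor $\Rightarrow$ non-planar'' half of Wagner's Theorem, Theorem~\ref{Wagner}.) So the real work is minor-monotonicity of embeddability, and since a minor arises by definition from a finite sequence of deletions and admissible contractions, it suffices to check that each single step preserves the existence of an embedding into $\R^{2d}$. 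A deletion is trivial: if $L\subseteq K$ and $f\colon ||K||\to\R^{2d}$ is an embedding, then $f|_{||L||}$ is one.

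The admissible-contraction step carries the content. Suppose $L$ is obtained from $K$ by identifying two vertices $u,v$ lying in no common missing face of $K$. In particular $\{u,v\}$ is not a missing face, and since both $u$ and $v$ are vertices of $K$ this forces $\{u,v\}\in K$, so we are collapsing an actual edge. The vertex map $u\mapsto v$ fixing all other vertices is simplicial from $K$ onto $L$ --- this is exactly what the displayed formula for $L$ records --- and induces a continuous surjection $||K||\to||L||$. Starting from an embedding $f$ of $||K||$, I would perform a homotopy that drags $f(u)$ toward $f(v)$ along the image of the edge $uv$ while fixing the images of all other vertices; in the limit the map is constantly $f(v)$ on the former vertex $u$, so it factors through the contraction and yields a map $g\colon ||L||\to\R^{2d}$. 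Equivalently, one rerealizes the abstract complex $L$ geometrically by moving $f(u)$ onto $f(v)$ and taking convex hulls of the simplices of $L$.

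The main obstacle is to show that (after a small generic perturbation, if needed) this $g$ is injective, i.e.\ that the geometric simplices so obtained still meet pairwise in common faces, so that $g$ realizes $||L||$ and --- being injective on a compact space --- is an embedding. The only coincidences that could newly appear are between a simplex $F_1\cup\{v\}$, arising from some $F_1\cup\{u\}\in K$, and another simplex of $L$; one must rule out improper intersections among these. This is precisely where the hypothesis that $u$ and $v$ share no missing face is used: it guarantees that whenever the vertices involved span a face in $L$ they already spanned one in $K$, so the intersection pattern is preserved and no two simplices overlap illegitimately. Turning this local bookkeeping into a rigorous argument is essentially the whole of Nevo's proof that embeddability passes to minors \cite{Nevo.2007}; I expect it to be the only delicate point, the remainder being the formal reduction above together with an application of Theorem~\ref{vK-Fl}.
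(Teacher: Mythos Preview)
The paper does not prove this theorem; it is cited as Corollary~1.2 of \cite{Nevo.2007} and used as a black box. So there is no ``paper's own proof'' to compare against. Your structural reduction is nonetheless the right one: the statement is Van Kampen--Flores (Theorem~\ref{vK-Fl}) plus minor-monotonicity of embeddability into $\R^{2d}$, and the only nontrivial step is the admissible-contraction case, exactly as you say.

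Where your sketch and Nevo's argument part ways is in how that step is carried out. Your picture --- slide $f(u)$ onto $f(v)$ along the edge, take convex hulls, perturb generically --- is a \emph{linear} (or at best PL) argument; for a merely topological embedding $f$ there is no ``convex hull'' and no general position to invoke, so the homotopy you describe is not available as stated. Nevo's proof in \cite{Nevo.2007} instead goes through the Van Kampen obstruction: an admissible contraction induces an equivariant map on deleted products, so nonvanishing of the obstruction for the minor $(\Delta_{2d+2})^{\leq d}$ forces nonvanishing for $K$, hence non-embeddability. That algebraic-topological route handles the topological case uniformly, without any appeal to straightening or genericity. Your intuition about why the no-common-missing-face hypothesis matters is sound, but the mechanism by which it enters Nevo's proof is not a rigorous version of your geometric bookkeeping; it is a different argument in character.
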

We now apply this to our question by showing that adding an additional simplex to $\mathcal{C}(\partial C_{2d + 1}(n))^{\leq d}$ creates $(\Delta_{2d+2})^{\leq d}$ as a minor and thus destroys embeddability into $\R^{2d}$.
\cite[Remark 6.4]{Nevo.2008} states this without proof.
Note that this proof of the non-embeddability of $\mathcal{C}(\partial C_{2d + 1}(n))^{\leq d} \cup \{M\}$ for a missing $d$-face $M$ is independent of Theorem \ref{NevoWagnerThm}.
\begin{lem}\label{zweiDeeMinor}
 Let $M$ be a missing $d$-face of $C_{2d + 1}(n)$ for $n \geq 2d+3$. 
 Then $(\Delta_{2d+2})^{\leq d}$ is a minor of $\mathcal{C}(\partial C_{2d + 1}(n))^{\leq d} \cup \{M\}$.
\end{lem}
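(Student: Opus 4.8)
The plan is to start from $L := \mathcal{C}(\partial C_{2d+1}(n))^{\leq d} \cup \{M\}$ and produce $(\Delta_{2d+2})^{\leq d}$, the $d$-skeleton of the simplex on $2d+3$ vertices, by a sequence of admissible contractions followed by one deletion. The rough idea: the complex $(\Delta_{2d+2})^{\leq d}$ can be obtained from the $d$-skeleton of the boundary of $C_{2d+1}(2d+3)$ by adding a single missing $d$-face (as the excerpt already notes, $(\Delta_{2d+2})^{\leq d}\setminus F$ is exactly $(\mathcal{C}(\partial C_{2d+1}(2d+3)))^{\leq d}$ for a maximal face $F$). So if I can contract the $n$-vertex cyclic polytope skeleton down to the $(2d+3)$-vertex one while carrying the extra face $M$ along to the right missing slot, I am done.

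First I would handle the case $n = 2d+3$ separately as the base: here I must check that $\mathcal{C}(\partial C_{2d+1}(2d+3))^{\leq d} \cup \{M\}$ already \emph{is} $(\Delta_{2d+2})^{\leq d}$. Since $C_{2d+1}(2d+3)$ is $d$-neighborly, its $d$-skeleton contains all $d$-subsets except the missing ones; one uses Gale's evenness condition to see there is exactly one missing $d$-face up to the dihedral symmetry (the skeleton of $\partial C_{2d+1}(2d+3)$ has $\binom{2d+3}{d+1}-1$ faces of dimension $d$), so adding $M$ gives the full $d$-skeleton, possibly after applying a symmetry of the cyclic polytope so that the specific $M$ matches. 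For $n > 2d+3$, the plan is to delete down and contract: repeatedly pick a vertex $v \notin M$ and an adjacent vertex (in the cyclic order) $u \notin M$, and contract $u$ to $v$. The key point to verify is that such contractions are \emph{admissible}, i.e.\ that $u$ and $v$ lie in no common missing $k$-face of the current complex for any $k \leq d$. Because $C_{2d+1}(n)$ is $d$-neighborly, there are no missing faces of dimension $< d$ at all, and for dimension $d$ one checks via Gale's evenness condition that two cyclically-consecutive vertices $u, u+1$ always lie together in some facet (a $2d$-set containing the interval $\{u,u+1\}$ can always be completed to satisfy the parity condition), hence in no missing $d$-face. Adding the single extra face $M$ does not create new missing faces; it can only destroy the "missingness" of $M$ itself. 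So as long as I choose the contracted pair among consecutive vertices avoiding $M$, admissibility holds in $L$ as well.

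Next I would track what the contraction does to the face structure: contracting $u \mapsto v$ for consecutive $u, v$ turns $C_{2d+1}(n)$'s boundary skeleton into (a complex containing) $C_{2d+1}(n-1)$'s boundary skeleton — more precisely, by Gale's condition the faces of $\mathcal{C}(\partial C_{2d+1}(n-1))$ on vertex set $[n]\setminus\{u\}$ all survive, so the contracted complex has $\mathcal{C}(\partial C_{2d+1}(n-1))^{\leq d}$ as a subcomplex, and $M$ (or its image, unchanged since $u \notin M$) is still present as a $d$-face that is missing from that subcomplex. Iterating $n - (2d+3)$ times and then deleting down to exactly that subcomplex-plus-$M$ brings me to the base case. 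The main obstacle I anticipate is the bookkeeping in the induction step: one must ensure that after each contraction enough vertices remain outside $M$ to keep going (this needs $n \geq 2d+3$ and $|M| = d+1$, so there are always $\geq d+2$ vertices outside $M$ — plenty), and that the image of $M$ is genuinely still a \emph{missing} $d$-face of the smaller cyclic skeleton rather than accidentally becoming a real face; this is where a careful application of Gale's evenness condition to the shrunk vertex set is required, together with possibly relabeling via the cyclic polytope's symmetry group so the final $M$ sits in the unique missing-face orbit of $C_{2d+1}(2d+3)$.
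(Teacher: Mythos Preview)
Your proposal is correct and follows essentially the same route as the paper: repeatedly contract cyclically consecutive pairs $j,\,j{+}1\notin M$, using $d$-neighborliness and Gale's evenness to verify admissibility, and iterate down to $n=2d+3$. Two minor simplifications are available: the paper shows each such contraction yields \emph{exactly} $\mathcal{C}(\partial C_{2d+1}(n-1))^{\leq d}\cup\{M\}$ (so your safety-net deletion step is unnecessary), and for $n=2d+3$ there is precisely one missing $d$-face, namely $\{2,4,\ldots,2d+2\}$, so no appeal to the dihedral symmetry is needed in the base case.
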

\begin{proof}
 Let us first identify the missing $d$-faces of $C_{2d + 1}(n)$.
 By Gale's Evenness Condition and Proposition \ref{faces} we know that $S \subset [n]$ with $|S| = d+1$ is a $d$-face if and only if there are $d$ additional vertices such that these $2d+1$ vertices are arranged in one block of odd length containing either $1$ or $n$ and possibly several blocks of even length. Phrased with the terminology from the proof of Lemma \ref{subcomplexes} this means that the maximal intervals formed by these $2d+1$ vertices that do not contain $1$ or $n$ are of even length.

 It follows from this that $S$ is a $d$-face if and only if $1\in S$, $n \in S$ or it contains two adjacent vertices $i, i+1 \in S$, as these are the cases when there are $\leq d$ vertices that might need a neighbor. 
 Thus, for our missing face $M$ we have $M=\{i_1, \ldots , i_{d+1}\}$ with
 $i_1 \neq 1$, $i_{d+1}\neq n$ and $i_{k-1} + 1 < i_k < i_{k+1}-1$ for $2\leq k \leq d$. Again with the terminology of Lemma \ref{subcomplexes} this means that all elements of $M$ are isolated vertices.
 
\begin{figure}[ht]
\begin{center}
\psfrag{1}{$1$}
\psfrag{2}{$i_1$}
\psfrag{3}{$i_2$}
\psfrag{4}{$i_3$}
\psfrag{5}{$i_{d-2}$}
\psfrag{6}{$i_{d-1}$}
\psfrag{7}{$i_d$}
\psfrag{8}{$i_{d+1}$}
\psfrag{n}{$n$}
\psfrag{p}{$\ldots$}
\includegraphics[scale=0.6]{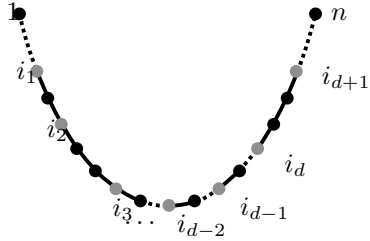}
\end{center}
\caption{A sketch of the vertices of $C_{2d + 1}(n)$ with the vertices of a missing face highlighted.}
\end{figure}

 Let $K = \mathcal{C}(\partial C_{2d + 1}(n))^{\leq d} \cup \{M\}$.
 By the above arguments, for $j \neq 1,n-1$ there is no missing $d$-face of $C_{2d + 1}(n)$ containing the vertices $j$ and $j+1$.
 Because $C_{2d + 1}(n)$ is $d$-neighborly, this means that identifying two vertices $j, j+1 \notin M$ with $j \neq 1,n-1$, if such vertices exist, is an admissible contraction in $K$.

 Let us now look at the resulting complex $K'$.
 Just like $K$ it contains all possible $k$-simplices for $k \leq d-1$.
 Which $d$-simplices are in $K'$?
 Certainly all $d$-simplices of $K$ not containing $j$ remain.
 Of the others, the ones that have $j$ and $j+1$ as elements turn into $(d-1)$-simplices.
 The ones that do not contain $j+1$ become $d$-simplices of the new complex, now containing $j+1$ instead of $j$.
 
 This means for $S \subseteq [n] \setminus {j}$ with $|S|=d+1$ we get
 \[
 S \in K' \Leftrightarrow 1 \in S\text{, } n \in S\text{, } (j-1,j+1 \in S) \text{ or } (i, i+1 \in S) \text{ for some } i,
 \]
 which proves that $K'$ is isomorphic to the $d$-skeleton of the cyclic $(2d+1)$-poly\-tope on $n-1$ vertices with $M$ as an additional simplex.
 
We can proceed to contract vertices in this manner until there is no more $j\notin M$ such that $j+1 \notin M$, $j \neq 1$ and $j+1 \neq n$.
This is the case if we are left with $2d+3$ vertices and our missing face $M$ contains every second vertex.

Thus, it suffices to consider the case $n=2d+3$ and $M = \{2,4,\ldots,2d+2\}$ where $M$ is the only missing $d$-face and hence $K$ the complete complex on $2d+3$ vertices: $K=(\Delta_{2d+2})^{\leq d}$.
\end{proof}

\noindent This concludes the part on lower bounds. We achieved a lower bound by looking at the cyclic polytopes as examples of embeddable complexes and could see that these examples cannot be improved by just adding more simplices.
\chapter{Upper Bounds}\label{upperBounds}
In this chapter, we now turn to upper bounds for Problem \ref{dieFrage}.
The idea we will pursue is to exclude certain non-embeddable complexes as subcomplexes.
In Section \ref{embeddings} we already introduced a class of complexes, 
which seems to contain all known examples of minimal non-embeddable complexes.
We will recall their definition in Section \ref{excludingJoins}.
A complex that has such a complex as a subcomplex is itself non-embeddable. Thus, if a certain number of simplices enforces these complexes as subcomplexes, it also enforces non-embeddability.

To this end, we will consider simplicial complexes as hypergraphs, a generalization of graphs which will be introduced at the start of this chapter. We will then use results from extremal hypergraph theory to estimate the number of edges that enforces as subgraphs the hypergraphs corresponding to our forbidden complexes.
We only get a result for certain instances:
We show that for large enough $n$, a $d$-dimensional simplicial complex on $n$ vertices that embeds in $\R^{2d}$ can have at most $n^{d+1-\frac{1}{3^d}}$ $d$-simplices, which slightly improves the trivial upper bound of $n^{d+1}$.

In the last part we will see that a better estimate for the number of edges enforcing these forbidden subgraphs could not improve this bound by much.

\section{Hypergraphs}
We start by quickly introducing hypergraphs.
While edges of a graph always contain exactly two vertices, the edges of a hypergraph are arbitrary subsets of the set of vertices:

\begin{Def}[hypergraph]
 A \emph{(finite) hypergraph} is a pair $H=(V,E)$ where $V$ is a finite set and $E$ is a collection of subsets of $V$: $E\subseteq \mathcal{P}(E)$.
Elements of $V$ are called \emph{vertices}; the \emph{edges} of $H$ are the elements of $E$. We will denote the number of vertices of a hypergraph $H$ by $n(H)$ and its number of edges by $e(H)$.

A \emph{subhypergraph} of a hypergraph $H=(V,E)$ is a hypergraph $F=(V',E')$ with $V' \subseteq V$ and $E' \subseteq E$.
\end{Def}

Note that the edges of a hypergraph can have different cardinalities.
Hypergraphs where all edges have the same cardinality are called regular:

\begin{Def}[$d$-regular, $d$-graph]
 Let $d \geq 2$.
 A hypergraph $H$ is \emph{$d$-regular} or simply a \emph{$d$-graph} if all of its edges have cardinality $d$: $|S| = d$ for all $S \in E$.
\end{Def}

The objects of standard graph theory could, e.g., in this context be referred to as 2-graphs. We will continue calling them graphs where there is no ambiguity.

Comparing hypergraphs and abstract simplicial complexes ,which are also collections of subsets of some vertex set, one can observe one difference: Simplicial complexes are hereditary, i.e., for each edge they also contain all of its subsets. Hypergraphs don't have to be: a hypergraph could for example contain a ``triangle'' $\{a,b,c\}$ along with the edge $\{a,b\}$ but not the other two edges.
One could consider simplicial complexes as (non-regular) hereditary hypergraphs.
Pure $d$-dimensional simplicial complexes are in one-to-one correspondence with $(d+1)$-graphs: The set of all $d$-simplices of a pure $d$-dimensional complex $K$, denoted by $(K)^d$, is the edge set of a $(d+1)$-graph, and by adding all subsets of edges we can regain $K$ from it.

We will mostly be interested in $d$-regular hypergraphs with an additional property, a generalization of bipartite (2-)graphs:

\begin{Def}[$d$-partite $d$-graph, complete $d$-partite $d$-graph]
 A $d$-graph $H=(V,E)$ is called \emph{$d$-partite} if its vertex set can be partitioned into $d$ pairwise disjoint sets $V=V_1 \cup V_2 \cup \ldots \cup V_d$, $V_i \cap V_j = \emptyset$ for $i \neq j$, such that $|S \cap V_i| \leq 1$ for every $i \in [d]$ and every $S \in E$.

The \emph{complete $d$-partite $d$-graph} $K^{(d)}_{l_1,l_2,\ldots,l_d}$ has a vertex set $V$ that can be partitioned into pairwise disjoint sets $V_1, V_2, \ldots, V_d$ with $|V_i|=l_i$ such that its edge set consists of all possible $d$-sets $S$ that contain exactly one element from each of the $V_i$: $E = \left\{S \subset V \;\middle|\; \text{ for every } i \in [d]\text{: }|S \cap V_i| = 1 \right\}$.
%
\end{Def}

\section{Some Extremal Hypergraph Theory}
We now turn to extremal problems on hypergraphs. The kind of question we are interested in is the following:
With fixed $k$ and $n$, what is the maximal number of edges a $k$-graph on $n$ vertices can have without containing certain subgraphs?
To address these questions we first introduce some terminology:

\begin{Def}[$\ex(n,\mathcal{F})$]
 Let $k \geq 2$ and let $\mathcal{F}$ be a family of $k$-graphs. A $k$-graph $H$ that contains no copy of any $F \in \mathcal{F}$ as a subgraph is called \emph{$\mathcal{F}$-free}.
By $\ex(n,\mathcal{F})$ we denote the maximal number of edges for an $\mathcal{F}$-free $k$-graph on $n$ vertices: \[
 \ex(n,\mathcal{F})=\max\left\{m\;\middle|\; H=(V,E) \text{ } \mathcal{F}\text{-free } k\text{-graph, } |V|=n,\, |E|=m\right\}.
\]
\end{Def}
While calculating $\ex(n,\mathcal{F})$ for a given $\mathcal{F}$ is in general very difficult, it is often possible to estimate $\ex(n,\mathcal{F})$ by lower and upper bounds.
We will now collect a few simple observations that are helpful to find estimates.

Observe that for every $F \in \mathcal{F}$ an $\mathcal{F}$-free graph is in particular $F$-free. This proves the following lemma:
\begin{lem}\label{exBoundedBySingleMember}
 For every family $\mathcal{F}$ of $k$-graphs  $\ex(n,\mathcal{F}) \leq \min_{F \in \mathcal{F}} \ex(n,F)$.
\end{lem}
Note that there are families $\mathcal{F}$ for which $\ex(n,\mathcal{F}) < \min_{F \in \mathcal{F}} \ex(n,F)$.
Consider, e.g., the two graphs $G$ and $H$ depicted in Figure \ref{exGr}.

It is easy to see that 
\[
 \ex(n,G) = \begin{cases}
             n-1 & n\neq3,\\
             3 & n=3
            \end{cases}
\]
 and $\ex(n,H) = \lfloor \frac{n}{2} \rfloor$, while $\ex(n,\{G,H\}) = 1$ for all $n \in \mathbb{N}$.

\begin{figure}[ht]
\begin{center}
\psfrag{G}{$G$}
\psfrag{H}{$H$}
\includegraphics[scale=0.7]{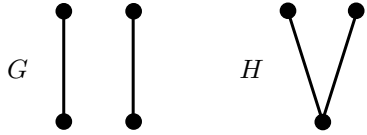}
\end{center}
\caption{A graph that contains neither of these graphs $G$ and $H$ can have at most one edge.}\label{exGr}
\end{figure}

For families of $k$-graphs not containing any $k$-partite hypergraphs, there is the following simple but strong lower bound:
\begin{lem}\label{partiteGraphs}
 If a family $\mathcal{F}$ of $k$-graphs has no $k$-partite member, there is a $c>0$ such that $\ex(n,\mathcal{F}) \geq c n^k$ for large enough $n$.
\end{lem}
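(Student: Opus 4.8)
The plan is to use a random (probabilistic) construction: take the complete $k$-partite $k$-graph on $n$ vertices and delete edges until no forbidden subgraph survives, then argue that enough edges remain. Concretely, split $[n]$ into $k$ parts $V_1,\dots,V_k$ of size $\lfloor n/k\rfloor$ or $\lceil n/k\rceil$, and let $H_0=K^{(k)}_{|V_1|,\dots,|V_k|}$, which has $\Theta(n^k)$ edges. Since every $F\in\mathcal F$ is \emph{not} $k$-partite, no copy of any $F$ can appear as a subhypergraph of $H_0$ — any $k$-graph embeddable in a $k$-partite $k$-graph is itself $k$-partite — so in fact $H_0$ is already $\mathcal F$-free and we are done immediately. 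This is the cleanest route and I expect it is the intended one.

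First I would record the elementary observation that a subhypergraph of a $k$-partite $k$-graph is $k$-partite: if $F=(V',E')$ with $V'\subseteq V=V_1\dot\cup\cdots\dot\cup V_k$ and $E'\subseteq E$, then $V'=\bigcup_i (V'\cap V_i)$ witnesses that $F$ is $k$-partite, since each edge of $F$ is an edge of $H_0$ and hence meets each $V_i$ in at most one vertex. Second I would take $H_0=K^{(k)}_{\lceil n/k\rceil,\dots,\lceil n/k\rceil,\lfloor n/k\rfloor,\dots}$ (balancing the part sizes), note $e(H_0)\geq \lfloor n/k\rfloor^k \geq (n/k-1)^k \geq c n^k$ for a suitable constant $c=c(k)>0$ and all $n$ large enough, and conclude: since $H_0$ is $k$-partite, by the first step it contains no copy of any non-$k$-partite $F$, so in particular no copy of any $F\in\mathcal F$; hence $H_0$ is $\mathcal F$-free and $\ex(n,\mathcal F)\geq e(H_0)\geq cn^k$.

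There is essentially no obstacle here — the only mildly delicate point is being a little careful that ``contains a copy of $F$ as a subgraph'' really does mean there is an injective vertex map carrying edges of $F$ to edges of $H_0$, so that the image, and therefore $F$ itself, inherits $k$-partiteness from $H_0$; once that is spelled out the argument is immediate. One could alternatively phrase the bound with unbalanced parts (e.g.\ parts of size $\lfloor n/k\rfloor$ with the leftover vertices discarded), which only changes the constant $c$, so I would not optimize $c$. Thus I would present this as a short direct proof rather than a probabilistic deletion argument, and remark in passing that the balanced complete $k$-partite $k$-graph is, in a sense made precise by later extremal results in the chapter, essentially the extremal example.
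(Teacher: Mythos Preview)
Your proof is correct and is essentially the same as the paper's: both take the balanced complete $k$-partite $k$-graph on $n$ vertices, observe that every subhypergraph of it is $k$-partite and hence it is $\mathcal{F}$-free, and bound its edge count below by $(n/k - O(1))^k \geq c n^k$. The only difference is cosmetic --- the paper uses the parts $l_i=\lfloor (n+i-1)/k\rfloor$ directly, while you first mention a probabilistic deletion scheme before (rightly) discarding it as unnecessary.
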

\begin{proof}
Consider the complete $k$-partite $k$-graph $K^{(k)}_{l_1,l_2,\ldots,l_k}$ with $l_i=\lfloor\frac{n+i-1}{k}\rfloor$ which has $n$ vertices. It is called the \emph{complete equipartite $k$-graph} on $n$ vertices.
This $k$-graph is $\mathcal{F}$-free because all of its subgraphs are $k$-partite.
It has $\prod_{1 \leq i \leq k}\lfloor\frac{n+i-1}{k}\rfloor \geq (\frac{n-l}{k})^k$ edges for some $0 \leq l < k$.
\end{proof}
%
%
As a $k$-graph on $n$ vertices can have at most as many edges as the complete $k$-graph, we have a trivial upper bound of $\ueber{n}{k} = \Theta(n^k)$. Note that the lower bound for non-$k$-partite $k$-graphs above is of the same order.

For complete $k$-partite $k$-graphs with partition sets of fixed size, Erd\H{o}s \cite{Erdos.1964} proved the following upper bound which we will later use:
\begin{thm}[{Erd\H{o}s~\cite[Theorem 1]{Erdos.1964}}]\label{Erdos}
Let $l, k\geq 2$.
Then there is $n_0 = n_0(k,l)$ such that $\ex(n,K^{(k)}_{l,\ldots,l}) < n^{k-\frac{1}{l^{k-1}}}$ for all $n > n_0$. 
\end{thm}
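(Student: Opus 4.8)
This is the hypergraph analogue of the K\H{o}v\'ari--S\'os--Tur\'an theorem, and the plan is to prove it by induction on $k$ using a double-counting argument. For the base case $k=2$ one recovers the bound $\ex(n,K^{(2)}_{l,l}) < n^{2-1/l}$: I would count incidences between vertices $v$ and $l$-subsets $S$ of the neighbourhood $N(v)$. Since the graph is $K_{l,l}$-free, no $l$-set is contained in the neighbourhoods of $l$ or more distinct vertices, so each $S$ is counted at most $l-1$ times, giving $\sum_v \binom{d(v)}{l} \leq (l-1)\binom{n}{l}$. Applying convexity of $x \mapsto \binom{x}{l}$ on the left (Jensen's inequality, with average degree $2e/n$) and the crude estimate $\binom{n}{l} < n^{l}/l!$ on the right yields, after rearranging, $e < n^{2-1/l}$ once $n$ is large enough that the lower-order terms are absorbed.

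For the inductive step, assume the bound for $(k-1)$-graphs, i.e. $\ex(N, K^{(k-1)}_{l,\ldots,l}) < N^{(k-1)-1/l^{k-2}}$ for $N$ large, and let $H=(V,E)$ be a $K^{(k)}_{l,\ldots,l}$-free $k$-graph on $n$ vertices. For each vertex $v$ let $L_v$ be its link, the $(k-1)$-graph on $V\setminus\{v\}$ with edge set $\{T : |T|=k-1,\ T\cup\{v\}\in E\}$; then $\sum_v e(L_v) = k\,e(H)$. I would count the set $P$ of pairs $(v,D)$ where $D=(A_1,\ldots,A_{k-1})$ is an ordered tuple of pairwise disjoint $l$-subsets of $V\setminus\{v\}$ such that every transversal $\{a_1,\ldots,a_{k-1}\}$ with $a_i\in A_i$ lies in $L_v$, i.e. a labelled copy of $K^{(k-1)}_{l,\ldots,l}$ inside $L_v$. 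On one hand, for fixed $D$ the vertices $v\notin\bigcup_i A_i$ whose link contains $D$ number at most $l-1$, since any $l$ of them would extend $D$ to a $K^{(k)}_{l,\ldots,l}$ in $H$; as there are at most $\binom{n}{l}^{k-1}$ choices of $D$ and at most $(k-1)l$ further vertices inside $\bigcup_i A_i$, this gives $P = O(n^{(k-1)l})$. On the other hand $P = \sum_v (\text{number of labelled copies of } K^{(k-1)}_{l,\ldots,l} \text{ in } L_v)$, and a supersaturation estimate --- deduced from the inductive hypothesis together with convexity --- shows that a $(k-1)$-graph which is sufficiently denser than its extremal number contains polynomially many such labelled copies. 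Feeding in $e(L_v)$, whose average value is $k\,e(H)/n$, and using convexity over $v$, one sees that $e(H) \geq n^{k-1/l^{k-1}}$ would force $P$ above the $O(n^{(k-1)l})$ upper bound; hence $e(H) < n^{k-1/l^{k-1}}$ for all $n > n_0(k,l)$.

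\textbf{The main obstacle} is the supersaturation step and the attendant exponent bookkeeping. The inductive hypothesis only guarantees \emph{one} copy of $K^{(k-1)}_{l,\ldots,l}$ once the link passes its extremal number, whereas the lower bound on $P$ needs \emph{many} labelled copies; and the exponent $1/l^{k-1}$ is exactly the value at which the two estimates on $P$ just barely collide, so the constants and lower-order terms have to be tracked carefully --- which is the reason for the unspecified threshold $n_0(k,l)$. An alternative that avoids invoking abstract supersaturation is to unroll the convexity argument of the base case through all $k$ coordinates simultaneously, counting vertex--$l$-set flags at every level at once; this is essentially Erd\H{o}s' original bookkeeping, and it turns the nested induction into a single multi-index computation while carrying the same combinatorial content.
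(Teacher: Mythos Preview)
The paper does not contain a proof of this theorem: it is stated as a citation of Erd\H{o}s' 1964 result and used as a black box to derive Proposition~\ref{theUpperBound}. There is therefore no in-paper argument to compare your proposal against.

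For what it is worth, your sketch is essentially Erd\H{o}s' original proof: induction on $k$, base case K\H{o}v\'ari--S\'os--Tur\'an, and for the step count copies of $K^{(k-1)}_{l,\ldots,l}$ in vertex links versus the global cap of $l-1$ common extensions. Your identification of the supersaturation step as the delicate point is accurate; Erd\H{o}s handles it by the ``unrolled'' version you mention at the end, carrying the convexity/Jensen estimate through all $k$ levels rather than appealing to an abstract supersaturation lemma. Either packaging works, and the unspecified $n_0(k,l)$ is exactly where the lower-order terms get absorbed.
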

A survey on extremal hypergraph theory is \cite{Furedi.1991}, where this result can be found (without proof) in Chapter $4$.
\section{An Upper Bound for the Case \texorpdfstring{$\dR=2d$}{}}
\label{excludingJoins}
We now use the above concepts to achieve an upper bound on the size 
of a simplicial complex not containing as subcomplexes any of the non-embeddable complexes we introduced in Section \ref{embeddings}, which is then also an upper bound for Problem \ref{dieFrage}. Recall Theorem \ref{joins of nice complexes}:

{%
\renewcommand\thethm{\savedtheoremnumber}%
\begin{thm}[{Schild~\cite[Theorem 3.1]{Schild.1993}}]
 Let $K_1, K_2,\ldots,K_s$ be simplicial complexes and suppose there are $n_1, n_2,\ldots,n_s \in \mathbb{N}$ such that $K_i$ is nice on $[n_i]$.
 Then $K = K_1*K_2*\ldots*K_s$ is not embeddable in $\R^{\dR}$ for ${\dR}=\left( \sum_{i=1}^s n_i \right)-s-2$.

 Except for the following two cases the complex $K$ is minimal non-embeddable,
 i.e., every proper subcomplex is linearly embeddable in $\R^{\dR}$:
 \begin{enumerate}
  \item All $K_i$ are simplices.
  \item $K=K_1$ is the boundary of a simplex with an additional vertex ($s=1$). 
 \end{enumerate}
\end{thm}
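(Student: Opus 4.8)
The plan is to split the statement into its two halves — non-embeddability of $K=K_1*\cdots*K_s$ into $\R^{r}$, and (outside the two listed cases) minimality — and treat them by different methods. Non-embeddability I would get from a deleted-join / $\mathbb{Z}_2$-index argument, the ``Sarkaria-type'' method behind Theorem~5.8.2 of \cite{Matousek.2003}. Minimality I would get by exhibiting explicit \emph{linear} embeddings of all proper subcomplexes, using Schlegel diagrams (Theorem~\ref{schlegel}).

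For the first half the key observation is that a complex $L$ is nice on $[n]$ exactly when it is \emph{Alexander self-dual}: the defining condition ``$F\in L$ or $[n]\setminus F\in L$, but not both'' says precisely that $L$ coincides with its Alexander dual $L^{*}=\{F\subseteq[n]:[n]\setminus F\notin L\}$. By Bier's theorem, for any complex $L\neq 2^{[n]}$ the deleted join $L*_\Delta L^{*}$ is a simplicial $(n-2)$-sphere; applied to $L=K_i$ (note $[n_i]\notin K_i$, so $K_i\neq 2^{[n_i]}$) this makes the $2$-fold deleted join $(K_i)^{*2}_\Delta=K_i*_\Delta K_i$ a PL $(n_i-2)$-sphere, on which the swap of the two join copies acts as a free simplicial $\mathbb{Z}_2$-action (a fixed simplex $A\sqcup A$ would force $A\cap A=\emptyset$). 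Since the $2$-fold deleted join commutes with joins of complexes, $K^{*2}_\Delta=(K_1)^{*2}_\Delta*\cdots*(K_s)^{*2}_\Delta$ is, as a free $\mathbb{Z}_2$-space, a join of spheres $S^{n_1-2}*\cdots*S^{n_s-2}\cong S^{N}$ with $N=\sum_i(n_i-2)+(s-1)=\bigl(\sum_i n_i\bigr)-s-1=r+1$. A free $\mathbb{Z}_2$-action on $S^{r+1}$ has $\mathbb{Z}_2$-index $r+1$, so there is no $\mathbb{Z}_2$-map $K^{*2}_\Delta\to S^{r}$, and the deleted-join criterion for embeddability then rules out any embedding $||K||\hookrightarrow\R^{r}$. (For $s=1$ and $K_1=(\Delta_{2d+2})^{\leq d}$ this recovers Theorem~\ref{vK-Fl}.)

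For minimality it suffices to show that for every maximal simplex $F$ of $K$ the complex $K\setminus\{F\}$ admits a linear embedding into $\R^{r}$, since every proper subcomplex is contained in some such $K\setminus\{F\}$; and a maximal simplex of a join is a join of maximal simplices, so $F=F_1\sqcup\cdots\sqcup F_s$. The geometric engine is Theorem~\ref{schlegel}: if $Q$ is a simplicial $(r+1)$-polytope and $G$ a facet, then $\mathcal{C}(\partial Q)\setminus\{G\}$ — and hence every subcomplex of it — embeds linearly into $\R^{r}$. So the task reduces to producing, for each choice of the $F_i$, a simplicial $(r+1)$-polytope $Q$ and a facet $G$ with $K\setminus\{F\}$ isomorphic to a subcomplex of $\mathcal{C}(\partial Q)\setminus\{G\}$. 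I would build $Q$ compatibly with the join structure: first realise each nice $K_i$ minus a facet $F_i$ inside the ``boundary minus a facet'' of a suitable simplicial polytope $Q_i$ — cyclic polytopes are the natural candidates, since by Section~\ref{embeddings} the model case $(\Delta_{2d+2})^{\leq d}$ minus a facet is precisely the $d$-skeleton of $\partial C_{2d+1}(2d+3)$ — and then combine the $Q_i$ by the join (free sum) operation on polytopes so that the dimensions add up to $r+1$ and the individual pieces assemble to $K\setminus\{F\}$. The dimension bookkeeping relies on the fact that a nice complex containing an $(n_i-1)$-face must be a simplex, so a non-simplex nice $K_i$ has $\dim K_i\le n_i-3$ and overall $\dim K\le r$.

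Finally one checks that the two exceptions genuinely fail minimality: if all $K_i$ are simplices then $K$ is a simplex, which by the dimension count must be $\Delta_{r+1}$, and $\partial\Delta_{r+1}\cong S^{r}$ is a proper subcomplex that does not embed into $\R^{r}$; and if $K=K_1$ is $\partial\Delta_m$ together with one extra isolated vertex, then $r=m-1$ and the proper subcomplex $\partial\Delta_m\cong S^{r}$ is again non-embeddable — so in both cases $K$ is not minimal, which is exactly the content of the two exclusions. I expect the real difficulty to lie in the polytopal realisation of $K\setminus\{F\}$: making the construction uniform over all nice $K_i$ and all maximal simplices, and verifying that the join of the polytopes $Q_i$ carries the ``boundary minus facet'' pieces onto $K\setminus\{F\}$ with the dimensions landing exactly on $r+1$. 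The non-embeddability half, by comparison, is essentially bookkeeping with Bier spheres, deleted joins, and the $\mathbb{Z}_2$-index.
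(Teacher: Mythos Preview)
The paper does not contain a proof of this theorem: it is stated (twice) as a citation of Schild~\cite[Theorem 3.1]{Schild.1993} and then \emph{used}, but never proved in the thesis itself. So there is no ``paper's own proof'' to compare your proposal against.

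That said, your outline for the non-embeddability half is the standard and correct one: niceness on $[n]$ is exactly Alexander self-duality, Bier's theorem makes each $(K_i)^{*2}_\Delta$ an $(n_i-2)$-sphere with the free swap action, deleted joins commute with ordinary joins, the dimensions add up to $r+1$, and the $\mathbb{Z}_2$-index obstruction finishes it. This is indeed the argument behind the Sarkaria-type results in \cite{Matousek.2003}, and it is essentially how Schild's non-embeddability is understood in the modern literature.

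For the minimality half your plan is reasonable in spirit but, as you yourself flag, the hard part is left open. Realising $K\setminus\{F\}$ inside $\partial Q\setminus\{G\}$ for a simplicial $(r+1)$-polytope $Q$ is not just a matter of joining cyclic polytopes: a general nice complex $K_i$ on $[n_i]$ need not be the $d_i$-skeleton of any polytope boundary (niceness is a purely combinatorial self-duality condition and allows many complexes that are not skeleta of anything), so the step ``realise each nice $K_i$ minus a facet $F_i$ inside the boundary minus a facet of a suitable simplicial polytope $Q_i$'' is exactly where a genuine construction is needed and where your sketch currently has a gap. Schild's original paper handles this by a direct geometric construction; if you want to complete your proposal you would have to either reproduce that or find an alternative route to the linear embeddings.
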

\addtocounter{thm}{-1}%
}%
Remember that a simplicial complex $K$ on at most $n$ vertices, with a vertex set identified with some $V\subseteq[n]$ is called $K$ \emph{nice} on $[n]$ if the following condition holds:
 \[
  F \subset [n] \Rightarrow F \in K \text{ or } [n] \setminus F \in K, \text{ but not both.}
 \]

We will consider the $(d+1)$-graphs corresponding to the complexes that appear in Theorem \ref{joins of nice complexes}. The last section showed that the $(d+1)$-partite ones among them are of particular interest.
For the case $\dR=2d$, where $d$ is the dimension of $K = K_1*K_2*\ldots*K_s$, we can show that these complexes have a certain form.
This enables us to characterize those among them that correspond to $(d+1)$-partite $(d+1)$-graphs. For $\dR \neq 2d$ it seems harder to find such a characterization.
This is why, from now on, we will only consider the case $\dR=2d$.

With the notation introduced in the last section we are thus interested in $\ex(n,\mathcal{F}_d)$ where $\mathcal{F}_d$ is the following family of hypergraphs:
\begin{Def}[$\mathcal{F}_d$]\label{EffDee}
 Let $d \geq 1$.
 Denote by $\mathcal{F}_d$ the family of hypergraphs induced by $d$-dimensional joins of nice complexes that by Theorem \ref{joins of nice complexes} do not embed in $\R^{2d}$:
\begin{equation*}
\begin{split}
 \mathcal{F}_d \mathrel{\mathop:}=\left\{(K_1*K_2*\ldots*K_s)^d \;\middle|\; K_i \text{ nice on }\right.& [n_i],\, \dim(K_1*K_2*\ldots*K_s)=d,\\
&\quad \left. n_1+n_2+\ldots+n_s = 2d +s+2 \right\}.
\end{split}
\end{equation*}
\end{Def}

Let us now try to find a different description of $\mathcal{F}_d$.
First, we observe that the dimension of a nice complex is bounded:

\begin{lem} \label{dimensionNiceComplex}
 If a simplicial complex $K$ with $\dim(K) = d$ is nice on $[n]$, we have $d+2 \leq n \leq 2d+3$.
\end{lem}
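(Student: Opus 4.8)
The plan is to pit the two halves of the niceness condition against each other on well-chosen test subsets. Niceness forces $K$ to be ``large'' --- for every $F$ it must contain $F$ or its complement $[n]\setminus F$ --- while $\dim(K)=d$ forces $K$ to be ``small'', in that it contains no set of cardinality $\geq d+2$. Feeding the right subsets into this tension will squeeze $n$ into the interval $[d+2,\,2d+3]$.

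First I would prove the lower bound $n\geq d+2$. Applying the niceness condition to $F=\emptyset$ and recalling that every simplicial complex contains $\emptyset$, the ``but not both'' clause forces $[n]\notin K$. If $n\leq d+1$, then every subset of $[n]$ has at most $d+1$ elements, and the only one of cardinality exactly $d+1$ (which can occur only when $n=d+1$) is $[n]$ itself, which we have just excluded; hence $K$ has no $d$-simplex, contradicting $\dim(K)=d$. Therefore $n\geq d+2$.

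Next I would prove the upper bound $n\leq 2d+3$ by contradiction. Assume $n\geq 2d+4$ and pick any $F\subseteq[n]$ with $|F|=d+2$; such an $F$ exists and is a proper, nonempty subset since $n\geq 2d+4>d+2$. Then $|[n]\setminus F|=n-d-2\geq d+2$ as well, so \emph{both} $F$ and $[n]\setminus F$ have at least $d+2$ elements, hence would be simplices of dimension at least $d+1$. Since $\dim(K)=d$, neither of them lies in $K$ --- contradicting the ``$F\in K$ or $[n]\setminus F\in K$'' part of niceness. Hence $n\leq 2d+3$.

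I do not expect a genuine obstacle here: the whole argument reduces to choosing the correct test sets ($\emptyset$ for the lower bound, an arbitrary $(d+2)$-set for the upper bound) and checking these exist in the ranges under consideration. As an alternative route to the upper bound, one can count: niceness splits the $2^n$ subsets of $[n]$ into $2^{n-1}$ complementary pairs, exactly one member of each lying in $K$, so $2^{n-1}=|K|\leq\sum_{i=0}^{d+1}\ueber{n}{i}$; evaluating the right-hand side shows this holds with equality at $n=2d+3$ (forcing $K=(\Delta_{2d+2})^{\leq d}$) and fails for $n\geq 2d+4$. The direct subset argument above is cleaner, so I would present that.
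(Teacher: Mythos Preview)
Your proof is correct and follows essentially the same approach as the paper: both arguments use that $\emptyset\in K$ forces $[n]\notin K$ for the lower bound, and that sets of size at least $d+2$ cannot lie in $K$ (so niceness is violated once both $F$ and $[n]\setminus F$ are that large) for the upper bound. The paper phrases the upper bound directly (deducing $K\supseteq\{F:|F|\le n-d-2\}$, hence $d\ge n-d-3$) while you phrase it by contradiction on a single test set, but the content is identical.
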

\begin{proof}
 Let $K$ be a simplicial complex with $\dim(K) = d$ that is nice on $[n]$.

 A $d$-dimensional complex does not contain any simplex $F$ with $|F| \geq d+2$.
 This means that $K$ has to contain all simplices $F$ with $|F| \leq n-d-2$.
 Hence, $d \geq n-d-3$ and therefore $n \leq 2d+3$.
 Furthermore, $K$ cannot contain $[n]$ because $\emptyset \in K$. Thus, $d \leq n-2$.
\end{proof}
With this lemma we can now determine the structure of those joins of nice complexes for which $\left( \sum_{i=1}^s n_i \right)-s-2 = 2d$ where $d$ is the dimension of the join:
\begin{lem}\label{dimJoinNiceCompl}
 For $1 \leq i \leq s$ let $K_i$ be a complex that is nice on $[n_i]$.
Set
\[
 d = \big( \sum_{i=1}^s d_i \big)+s-1 = \dim(K_1*K_2*\ldots*K_s),
\]
where $d_i=\dim(K_i)$, and suppose that
\[
 \big( \sum_{i=1}^s n_i \big)-s-2 = 2d.
\]

Then $K_i = (\Delta_{2d_i+2})^{\leq d_i}$ for $1 \leq i \leq s$, and therefore
\[
 \mathcal{F}_d = \big\{(K_1*K_2*\ldots*K_s)^d\;\big|\;K_i = (\Delta_{2d_i+2})^{\leq d_i},
 \sum_{i=1}^s d_i =d -s +1\big\}.
\]

\end{lem}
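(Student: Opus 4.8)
The plan is to first pin down the vertex counts $n_i$ by an elementary counting argument, then show that a nice complex of a prescribed dimension living on the maximal number of vertices allowed by Lemma \ref{dimensionNiceComplex} is forced to be a full skeleton, and finally translate the resulting data back into the description of $\mathcal{F}_d$.

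\textbf{Step 1: the vertex counts are forced to be maximal.} I would combine the two hypotheses $d = \big(\sum_{i=1}^s d_i\big) + s - 1$ and $\big(\sum_{i=1}^s n_i\big) - s - 2 = 2d$. Substituting the first into the second gives $\sum_{i=1}^s n_i = 2\sum_{i=1}^s d_i + 3s$, that is, $\sum_{i=1}^s\big(n_i - 2d_i - 3\big) = 0$. By Lemma \ref{dimensionNiceComplex}, each summand satisfies $n_i - 2d_i - 3 \le 0$, so a sum of nonpositive integers vanishes only if every term vanishes; hence $n_i = 2d_i + 3$ for all $i$.

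\textbf{Step 2: a nice complex on $[2d_i+3]$ of dimension $d_i$ is $(\Delta_{2d_i+2})^{\le d_i}$.} Here is the main point. Let $K$ be nice on $[m]$ with $m = 2d+3$ and $\dim(K) = d$. For any $F \subseteq [m]$ with $|F| \le d+1$ we have $|[m]\setminus F| \ge m - (d+1) = d+2$, so $[m]\setminus F$ cannot be a simplex of $K$, since a $d$-dimensional complex contains no simplex on $\ge d+2$ vertices; niceness then forces $F \in K$. Thus $K$ contains every subset of $[m]$ of size at most $d+1$, and again because $\dim(K) = d$ it contains nothing larger, so $K = (\Delta_{2d+2})^{\le d}$. (As a byproduct this shows every singleton lies in $K$, so $V(K) = [m]$; this is why the slight difference between ``nice on $[n]$'' as used here and in Schild's original formulation is immaterial for the complexes at hand.) Applying this to each $K_i$, using $n_i = 2d_i+3$ from Step 1, yields $K_i = (\Delta_{2d_i+2})^{\le d_i}$.

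\textbf{Step 3: rewriting $\mathcal{F}_d$.} Finally I would substitute $K_i = (\Delta_{2d_i+2})^{\le d_i}$ back into Definition \ref{EffDee}. Since $\dim K_i = d_i$, the requirement $\dim(K_1*\cdots*K_s) = d$ in that definition becomes $\sum_{i=1}^s d_i + s - 1 = d$, i.e. $\sum_{i=1}^s d_i = d - s + 1$; and then $\sum_{i=1}^s n_i = \sum_{i=1}^s(2d_i+3) = 2(d-s+1) + 3s = 2d+s+2$ holds automatically, so the condition $n_1+\cdots+n_s = 2d+s+2$ is subsumed. This gives exactly the claimed description
\[
 \mathcal{F}_d = \big\{(K_1*\cdots*K_s)^d \;\big|\; K_i = (\Delta_{2d_i+2})^{\le d_i},\ \textstyle\sum_{i=1}^s d_i = d-s+1\big\}.
\]
The whole argument is essentially bookkeeping; the only step needing genuine care is Step 2, where the dimension bound must be used twice — once to exclude the complement $[m]\setminus F$ from $K$ and once to exclude simplices larger than $d$ — and where one also reads off that the vertex set is all of $[n_i]$.
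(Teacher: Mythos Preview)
Your proof is correct and follows essentially the same approach as the paper: use Lemma~\ref{dimensionNiceComplex} together with the counting identity to force $n_i = 2d_i+3$, then use niceness and the dimension bound to conclude $K_i = (\Delta_{2d_i+2})^{\le d_i}$. Your Step~3, verifying that the vertex-count condition in Definition~\ref{EffDee} becomes redundant once the $K_i$ are identified, is a bit more explicit than the paper, which leaves this implicit.
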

\begin{proof}
 The assumption
\[
 \left( n_1+n_2+\ldots+n_s \right)-s-2 = 2d = \left( 2d_1 + 2d_2 + \ldots 2d_s \right)+2s-2
\]
 simplifies to
\[
 \left( n_1+n_2+\ldots+n_s \right) = \left( 2d_1 + 2d_2 + \ldots 2d_s \right)+3s. 
\]
 Because we have $n_i \leq 2d_i +3$ by Lemma \ref{dimensionNiceComplex}, this shows that $n_i = 2d_i + 3$.

 This determines $K_i$:
 Let $F \subset [n_i]$ with $|F| \leq d_i+1$. Then $[n_i] \setminus F$ cannot be an element of $K_i$ because $|[n_i] \setminus F| \geq d_i+2$. Since $K_i$ is nice, it hence has to contain all such $F$.
 Therefore, $K_i = (\Delta_{2d_i+2})^{\leq d_i}$.
\end{proof}
This also shows that all complexes in $\mathcal{F}_d$ are minimal non-embeddable, since none of the two exceptional cases in Theorem \ref{joins of nice complexes} applies.

For $d=2$ this shows that in $\mathcal{F}_2$ there are only three complexes: $(\Delta_2^{\leq 0})^{*3}$, $\Delta_4^{\leq 1} * \Delta_2^{\leq 0}$ and $\Delta_6^{\leq 2}$. We already used this in Section \ref{improveLowerBound} where we showed that none of these three appears as a subcomplex when a missing face is added to the boundary of $C_5(9)$.

For $d=1$, the case of planar graphs, this yields that $\mathcal{F}_1$ consists exactly of the two graphs $K_5=\Delta_4^{\leq 1}$ and $K_{3,3}=(\Delta_2^{\leq 0})^{*2}$ that characterize the planarity of graphs.

By Lemma \ref{partiteGraphs} only the $(d+1)$-partite $(d+1)$-graphs in $\mathcal{F}_d$ can yield interesting upper bounds. The following lemma characterizes these:
\begin{lem}\label{partiteness}
For $1\leq i\leq s$ let $d_i \in \mathbb{N}$ and $K_i = (\Delta_{2d_i+2})^{\leq d_i}$.
Furthermore, set $d = \dim(K_1*K_2*\ldots*K_s) = \left( \sum_{i=1}^s d_i \right)+s-1$.

Then $K = K_1*K_2*\ldots*K_s$ corresponds to a $(d+1)$-partite hypergraph if and only if $d_i = 0$ for all $1 \leq i \leq s$. 
\end{lem}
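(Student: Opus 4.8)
The plan is to work directly with the combinatorial description of $K = K_1 * \ldots * K_s$ where each $K_i = (\Delta_{2d_i+2})^{\leq d_i}$. The dimension of $K$ is $d = \big(\sum_i d_i\big) + s - 1$, so a maximal simplex (a $d$-simplex) has $d+1 = \big(\sum_i d_i\big) + 2s$ vertices; by the definition of the join, such a simplex is a disjoint union $F_1 \dot\cup \ldots \dot\cup F_s$ where $F_i$ is a $d_i$-simplex of $K_i$, i.e. $|F_i| = d_i + 1$. Thus the $(d+1)$-graph $(K)^d$ has vertex set $V(K_1) \dot\cup \ldots \dot\cup V(K_s)$ with $|V(K_i)| = 2d_i + 3$, and its edges are exactly the $(d+1)$-sets meeting each block $V(K_i)$ in exactly $d_i + 1$ vertices, with the extra constraint (coming from $K_i$ being the $d_i$-skeleton, not the full simplex) that $F_i$ must actually be a face of $\Delta_{2d_i+2}$ — which it automatically is, since any $(d_i+1)$-subset of a $(2d_i+3)$-set is a $d_i$-simplex of $(\Delta_{2d_i+2})^{\leq d_i}$. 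So in fact $(K)^d$ is the ``complete'' $(d+1)$-graph on these blocks with prescribed intersection sizes $d_i+1$.

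For the easy direction, suppose $d_i = 0$ for all $i$. Then $d + 1 = s$, each block $V(K_i)$ has $3$ vertices, and an edge is a set containing exactly one vertex from each of the $s = d+1$ blocks. This is visibly the vertex partition certifying that $(K)^d$ is $(d+1)$-partite (it is in fact the complete $(d+1)$-partite graph $K^{(d+1)}_{3,\ldots,3}$). So the ``if'' direction is immediate.

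For the ``only if'' direction, I would argue by contraposition: assume some $d_j \geq 1$ and show $(K)^d$ is \emph{not} $(d+1)$-partite. Suppose for contradiction there were a partition $V = W_1 \cup \ldots \cup W_{d+1}$ with every edge meeting each $W_\ell$ in at most one vertex. Fix the block $B := V(K_j)$, which has $2d_j + 3 \geq 5$ vertices, and note that \emph{every} edge of $(K)^d$ intersects $B$ in exactly $d_j + 1 \geq 2$ vertices, and moreover every $(d_j+1)$-subset of $B$ occurs as the $B$-part of some edge (complete the rest by picking any $d_i+1$ vertices from each other block). In particular, any two vertices $u, v \in B$ lie in a common edge. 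Hence $u$ and $v$ can never be in the same part $W_\ell$. But that forces all $2d_j + 3 \geq 5$ vertices of $B$ into distinct parts, each of which then contributes a vertex to some common edge — concretely, pick $d_j+1 \geq 2$ vertices of $B$ lying in $d_j+1$ distinct parts, say $W_1, \ldots, W_{d_j+1}$; extend to an edge $e$; then $e$ meets $d_j+1$ of the parts inside $B$ alone, and it must also meet the remaining $s - 1$ blocks, contributing at least $\sum_{i \neq j}(d_i+1) = d - d_j$ further vertices. A short counting check ($|e| = d+1$ but $e$ is forced to hit at least $(d_j+1) + (d - d_j) = d+1$ parts, which is fine for size but the real contradiction is finer): I would instead extract the contradiction from the fact that inside $B$ alone we can find $d_j + 2 \geq 3$ pairwise ``edge-compatible'' vertices, and each edge uses exactly $d_j+1$ of them, so some part $W_\ell$ must contain two of these $d_j+2$ vertices that co-occur in an edge — contradicting $(d+1)$-partiteness.

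The main obstacle, and the step to be careful about, is pinning down the precise counting argument in the ``only if'' direction: one must make sure that the constraint ``every edge hits $B$ in exactly $d_j+1$ vertices'' genuinely clashes with a valid $(d+1)$-partition, rather than merely looking like it should. The cleanest route is probably: in a $(d+1)$-partite $(d+1)$-graph every edge is a transversal of the $d+1$ parts, so it hits each part exactly once; but an edge hits $B$ in $d_j + 1$ vertices, so those $d_j+1$ vertices lie in $d_j+1$ distinct parts, and since (as shown) no part contains two co-edge vertices of $B$, \emph{all} of $B$'s $2d_j+3$ vertices lie in distinct parts; yet an edge picks only $d_j+1$ of them — so the parts containing the other $(2d_j+3)-(d_j+1) = d_j + 2 \geq 3$ vertices of $B$ must be ``filled'' by the $s-1$ other blocks, giving at most $s - 1$ such parts; combined with $d + 1 = \sum(d_i+1) + s = (2d_j+3) + \sum_{i\neq j}(d_i+1) + (s-1)$ being forced to be at least $(2d_j+3) + (s-1)$, one derives $\sum_{i \neq j}(d_i + 1) = 0$, impossible when $s \geq 2$, and if $s = 1$ then $K = K_j = (\Delta_{2d_j+2})^{\leq d_j}$ with $d_j \geq 1$, which is manifestly not $(d_j+1) = 2$-partite as a graph (it contains triangles). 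This closes the case analysis.
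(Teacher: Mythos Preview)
Your ``if'' direction is correct and matches the paper.

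The ``only if'' direction has the right opening move --- any two vertices of $B = V(K_j)$ with $d_j \geq 1$ lie in a common edge, so all $2d_j+3$ vertices of $B$ must land in distinct parts --- but the finish is garbled and does not close. Two concrete problems. First, the displayed identity ``$d+1 = \sum(d_i+1) + s$'' is false (the correct relation is $d+1 = \sum_i(d_i+1) = \sum_i d_i + s$), so the chain of equalities after it does not yield $\sum_{i\neq j}(d_i+1)=0$. Second, and more importantly, you never state or use the companion fact that any vertex of $B$ and any vertex \emph{outside} $B$ also co-occur in an edge (they lie in different join factors), so they too cannot share a part. Without this, your ``filling'' argument has no force: you write that the $d_j+2$ parts containing $B\setminus e$ ``must be filled by the $s-1$ other blocks, giving at most $s-1$ such parts,'' but neither clause is justified. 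The actual obstruction is the opposite --- those $d_j+2$ parts \emph{cannot} be hit by any non-$B$ vertex of $e$, hence $e$ fails to be a transversal --- and that contradiction only fires once you have the inter-block co-occurrence fact in hand.

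The paper sidesteps this tangle by counting globally from the start: any two vertices in different $K_i$ lie in different parts, and any two vertices in the same $K_i$ with $d_i \geq 1$ lie in different parts (complete $1$-skeleton). If $t\geq 1$ of the $d_i$ are positive, this forces at least
\[
(s-t) + \sum_{i:\,d_i\geq 1}(2d_i+3) \;=\; s + 2t + 2\sum_{i\leq t} d_i \;>\; s + \sum_i d_i \;=\; d+1
\]
parts, a contradiction. Your transversal idea can be repaired into a valid alternative proof, but you must invoke the inter-block co-occurrence explicitly; once you do, the contradiction is immediate for all $s\geq 1$ and no separate $s=1$ case is needed.
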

\begin{proof}
 Suppose that at least one of the $K_i$ is not $0$-dimensional.
 We can assume w.l.o.g.\ that there is $1\leq t \leq s$ such that $d_i > 0$ for $i\leq t$ and $d_i = 0$ for $i>t$.

 To show $(d+1)$-partiteness we would have to partition the vertices of $K$ into $d+1$ subsets
 such that for any two vertices in a common subset there is no simplex containing both vertices.
 For such a partition $V_1 \cup V_2 \cup \ldots \cup V_{d+1} = V(K)$ we have:
 \begin{itemize}
  \item Any two vertices in different $K_i$ constitute a simplex of the join and hence have to be contained in different $V_j$:
       \[v_1 \in V(K_{i_1}), v_2 \in V(K_{i_2}) \text{ with } i_1 \neq i_2 \Rightarrow v_1 \in V_{j_1}, v_2 \in V{j_2} \text{ and } j_1 \neq j_2.\]
  \item For $i \leq t$ the complex $K_i$ has a complete $1$-skeleton, therefore its vertices each need to be contained in different $V_j$:
       \[v, w \in V(K_i) \text{ for } i \leq t \Rightarrow v \in V_{j_1}, w \in V_{j_2} \text{ and } j_1 \neq j_2.\]
 \end{itemize}
Thus, the number of parts of the partition would at least have to be
%
%
\[
 (s-t) + \sum_{i=1}^t|V(K_i)|  = (s-t) +3t+ \sum_{i=1}^t 2d_i > s+\sum_{i=1}^t d_i = d+1.
\]
This shows that, if such a $t$ exists, $K$ cannot be $(d+1)$-partite.
Hence, all of the $K_i$ have to be $0$-dimensional for $K$ to be $(d+1)$-partite.

Now assume that $d_i=0$ for all $i$. Then $K=((\Delta_2)^{\leq 0})^{*s}$, the $s$-fold join of the complex consisting of only three vertices. This complex has dimension $d=s-1$. Because no simplex of $K$ can contain two vertices from the same copy of $(\Delta_2)^{\leq 0}$, its vertex set can be partitioned into $s=d+1$ subsets each containing the three vertices of one $K_i=(\Delta_2)^{\leq 0}$.
\end{proof}
Thus, there is only one hypergraph we can exclude in search for an upper bound: the one corresponding to $((\Delta_2)^{\leq 0})^{*(d+1)}$.
We just proved that this hypergraph is $(d+1)$-partite, where the partition of the vertex set consists of $d+1$ sets of size $3$.
Given this partition, every set containing exactly one vertex from each partition set is an edge.
This shows that this hypergraph is the complete $(d+1)$-partite $(d+1)$-graph $K^{(d+1)}_{3,3,\ldots,3}$. (See Figure \ref{k333}.)

\begin{figure}[htbp]
\begin{center}
\includegraphics[scale=0.6]{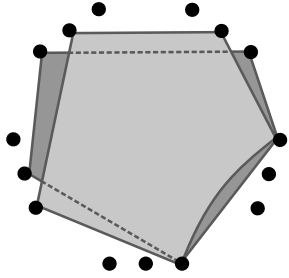}
\end{center}
\caption{Two exemplary edges of the $5$-graph $K^{(5)}_{3,3,3,3,3}$}\label{k333}
\end{figure}

Hence, Theorem \ref{Erdos} yields the desired upper bound for Problem \ref{dieFrage}:
\begin{prop}\label{theUpperBound}
 Let $d \geq 1$ and $n \geq 2d+1$.
 Then 
 \[
\max\left\{f_d(K)\;\middle|\; \dim(K)=d, |V(K)|=n, K \hookrightarrow \R^{2d}\right\} = O(n^{d+1-\frac{1}{3^d}})
.\]
\end{prop}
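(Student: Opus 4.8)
The plan is to reduce the statement to Erd\H{o}s' theorem (Theorem~\ref{Erdos}) via the forbidden-subcomplex strategy developed above. Let $K$ be a $d$-dimensional simplicial complex on $n$ vertices with $K \hookrightarrow \R^{2d}$. First I would recall that embeddability passes to subcomplexes, so $K$ contains no member of $\mathcal{F}_d$ as a subcomplex: by Theorem~\ref{joins of nice complexes} together with Lemma~\ref{dimJoinNiceCompl} every complex in $\mathcal{F}_d$ fails to embed into $\R^{2d}$. In particular, $K$ contains no copy of $((\Delta_2)^{\leq 0})^{*(d+1)}$, which by Lemma~\ref{partiteness} is the unique $(d+1)$-partite member of $\mathcal{F}_d$ and whose set of $d$-simplices is exactly the complete $(d+1)$-partite $(d+1)$-graph $K^{(d+1)}_{3,\ldots,3}$. (Lemma~\ref{partiteGraphs} is what tells us it is hopeless to exclude the non-$(d+1)$-partite members, so this is the only useful exclusion.)

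Next I would make the passage to hypergraphs precise. Consider the $(d+1)$-graph $(K)^d$ whose edges are the $d$-simplices of $K$, so that $f_d(K) = e((K)^d)$. I claim $(K)^d$ is $K^{(d+1)}_{3,\ldots,3}$-free. Indeed, if $(K)^d$ contained a copy of $K^{(d+1)}_{3,\ldots,3}$ on vertex classes $V_1,\ldots,V_{d+1}$ of size $3$, then, since $((\Delta_2)^{\leq 0})^{*(d+1)}$ is pure $d$-dimensional and its faces are precisely the subsets of its maximal simplices, and since $K$, being a simplicial complex, contains every face of each of its $d$-simplices, the subcomplex of $K$ generated by these $d$-simplices would be isomorphic to $((\Delta_2)^{\leq 0})^{*(d+1)}$ --- contradicting the previous paragraph. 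Hence $f_d(K) = e((K)^d) \leq \ex(n, K^{(d+1)}_{3,\ldots,3})$.

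Finally I would apply Theorem~\ref{Erdos} with $k = d+1$ and $l = 3$: there is $n_0 = n_0(d+1,3)$ such that $\ex(n, K^{(d+1)}_{3,\ldots,3}) < n^{(d+1)-\frac{1}{3^{d}}}$ for all $n > n_0$, since $k - \frac{1}{l^{k-1}} = (d+1) - \frac{1}{3^{d}}$. Combining, $f_d(K) < n^{d+1-\frac{1}{3^d}}$ for $n > n_0$; the finitely many remaining values $2d+1 \leq n \leq n_0$ contribute only a constant, absorbed into the $O(\cdot)$. This yields $\max\{f_d(K) \mid \dim(K)=d,\ |V(K)|=n,\ K\hookrightarrow\R^{2d}\} = O(n^{d+1-\frac{1}{3^d}})$.

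There is no serious obstacle here, as the substantive work sits in the earlier lemmas and in Erd\H{o}s' theorem; the only point requiring care is the middle step --- one must check that a copy of $K^{(d+1)}_{3,\ldots,3}$ among the $d$-faces of $K$ really forces $((\Delta_2)^{\leq 0})^{*(d+1)}$ as a genuine \emph{subcomplex} (not merely as a sub-hypergraph), which uses purity of the latter complex and the hereditary property of $K$. One should also note that the statement asserts only an asymptotic upper bound, so no attempt to optimize the constant or to treat small $n$ exactly is needed.
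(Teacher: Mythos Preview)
Your proof is correct and follows essentially the same route as the paper: both reduce to $\ex(n,K^{(d+1)}_{3,\ldots,3})$ via Lemmas~\ref{dimJoinNiceCompl} and~\ref{partiteness} and then invoke Erd\H{o}s' Theorem~\ref{Erdos} with $k=d+1$, $l=3$. You are in fact slightly more careful than the paper in justifying why a copy of $K^{(d+1)}_{3,\ldots,3}$ among the top faces forces $((\Delta_2)^{\leq 0})^{*(d+1)}$ as an honest subcomplex (via purity and the hereditary property), and in handling the finitely many small~$n$.
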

\begin{proof}
A $d$-dimensional simplicial complex that embeds in $\R^{2d}$ has to correspond to an $\mathcal{F}_d$-free hypergraph, as it can not have a subgraph that is not embeddable into $\R^{2d}$.
Thus, for $n \geq n_0(3,d)$ we have
\begin{align*}
 \max\left\{f_d(K)\;\middle|\; \dim(K)=d, |V(K)|=n, K \hookrightarrow \R^{2d}\right\} & \leq \ex(n,\mathcal{F}_d)\\
& \leq \ex(n,K^{(d+1)}_{3,3,\ldots,3})\\
& < n^{d+1-\frac{1}{3^d}},
\end{align*}
where the second inequality holds by Lemma \ref{exBoundedBySingleMember} and Lemma \ref{partiteness}, the third by Theorem \ref{Erdos}.
\end{proof}
This bound slightly improves the trivial upper bound of $\ueber{n}{d+1}=\Theta(n^{d+1})$, but does not get near the conjectured bound in Conjecture \ref{Conj3} and the lower bound of $\Omega(n^d)$ that we achieved in Chapter \ref{lowerBounds}.

Our estimate for $\max\left\{f_d(K)\;\middle|\; \dim(K)=d, |V(K)|=n, K \hookrightarrow \R^{2d}\right\}$ is hence not very precise.
In the case $d=2$ we now have:
\[
 2(n^2-6n+10) \leq \max\left\{f_2(K)\;\middle|\; \dim(K)=2, |V(K)|=n, K \hookrightarrow \R^4\right\} < n^{3-\frac{1}{9}}
\]
for large enough $n$.

Observe that for graphs ($d=1$) we get an upper bound of the order $O(n^{\frac{5}{3}})$.
One might think that $\ex(n,\mathcal{F}_1) = \ex(n,\{K_5,K_{3,3}\})$ is linear in $n$. But remember that we are only excluding $K_5$ and $K_{3,3}$ as subgraphs, not as subdivisions or minors! Actually, we will see in the upcoming sections that $\ex(n,\mathcal{F}_1) \geq n^{\frac{3}{2}}$.

\section{Improving the Upper Bound?}
In the previous section, we bounded the number of maximal simplices in a \mbox{$d$-complex} that embeds into $\R^{2d}$ by giving an upper bound for $\ex(n,\mathcal{F}_d)$. 
We now want to study how much could be gained by estimating $\ex(n,\mathcal{F}_d)$ more precisely.

To achieve this, we will, by means of the probabilistic method, prove a lower bound for $\ex(n,\mathcal{F})$ for any finite family $\mathcal{F}$ of $k$-graphs in which each member has at least $2$ edges.
This will show that for each $d$ and for large enough $n$, there has to be an $\mathcal{F}_d$-free $d$-graph with $n$ vertices and at least $C \cdot n^{d+1-\frac{2(d+1)}{3^{d+1}-1}}$ edges for some $C > 0$.
We first introduce the concept of random hypergraphs and state a few basic results which we will need for the proof.

\subsection{Random Hypergraphs}
Introductions to random (2-)graphs can be found in \cite[Chapter 10]{Alon.2000} and \cite[Chapter 5]{Mitzenmacher.2005}.
A concept for random hypergraphs is mentioned in \cite{Janson.2000}.
We assume knowledge of basic probability theory.

We first describe the Erd\H{o}s-R\'enyi model for random hypergraphs, as this is the model that we will be using.
The idea is to build a random $k$-graph on the vertex set $[n]$ edge by edge, including each edge $e \subset \ueber{[n]}{k}$ with probability $p$.
Formally, this works as follows:
\begin{Def}[$G^{(k)}(n,p)$]
 Let $n,k \in \mathbb{N}$, $n \geq k \geq 2$ and $p \in [0,1]$.

 For a set $e \subset \ueber{[n]}{k}$, we consider the Bernoulli trial with sample space 
\[
 \Omega_e=\{ e \in E(G), e \notin E(G)\}
\]
 and measure $P_e:\Omega_e \to [0,1]$ where 
\[
 P_e(e \in E(G)) = p \text{ and } P_e(e \notin E(G))=1-p.
\]

 Running this trial independently for every set $e \subset \ueber{[n]}{k}$ yields the probability space ($\prod_{e \subset \ueber{[n]}{k}} \Omega_e, \prod_{e \subset \ueber{[n]}{k}} P_e)$, which we denote by $G^{(k)}(n,p)$.

Its sample space can be identified with the set of all possible $k$-graphs on the vertex set $[n]$. The underlying measure is
\[
 P(G) = p^m (1-p)^{\ueber{n}{k}-m} \text{ for a } k \text{-graph } G \text{ with } |E(G)|=m.
\]
We will write $G \in G^{(k)}(n,p)$ to denote that we consider the $k$-graph $G$ as an elementary event in $G^{(k)}(n,p)$.
\end{Def}
The number of ``successes'' in this series of Bernoulli trials is the number of edges of the resulting hypergraph. We thus get:

\begin{lem} \label{numberEdges}
 Let $n \geq k \geq 2$ and $p \in [0,1]$. Let $E(G)$ denote the number of edges of $G \in G^{(k)}(n,p)$.
 Then $E$ is a binomially distributed random variable with parameters $\ueber{n}{k}$ and $p$
 and hence $\mathbb{E}(E) = \ueber{n}{k} p$.
\end{lem}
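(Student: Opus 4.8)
The plan is to write the edge count as a sum of indicator variables, one for each potential edge, and then invoke the elementary fact that a sum of independent Bernoulli$(p)$ variables is binomially distributed.

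First I would, for each $e \in \ueber{[n]}{k}$, introduce the random variable $X_e$ on $G^{(k)}(n,p)$ with $X_e(G) = 1$ if $e \in E(G)$ and $X_e(G) = 0$ otherwise. By the very construction of $G^{(k)}(n,p)$ as the product probability space $\left(\prod_{e} \Omega_e, \prod_e P_e\right)$, each $X_e$ is (essentially the coordinate projection onto) a Bernoulli trial, so $P(X_e = 1) = p$, and the family $\{X_e\}_{e \in \ueber{[n]}{k}}$ is mutually independent, since independence of the coordinates is exactly what the product measure encodes.

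Next I would note that the number of edges of $G$ is precisely the number of sets $e$ for which the associated trial was a success, i.e.
\[
 E = \sum_{e \in \ueber{[n]}{k}} X_e,
\]
a sum of $\ueber{n}{k}$ independent Bernoulli$(p)$ variables. By the standard characterization of the binomial distribution (or a one-line induction via the convolution formula), such a sum is $\mathrm{Bin}\!\left(\ueber{n}{k}, p\right)$-distributed, which is the first assertion. The formula for the mean then follows either from the known expectation of a binomial variable, or directly by linearity of expectation: $\mathbb{E}(E) = \sum_{e} \mathbb{E}(X_e) = \ueber{n}{k}\, p$.

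There is essentially no obstacle here; the only point requiring a little care is to make the identification of $G^{(k)}(n,p)$ with a product space explicit enough that the mutual independence of the $X_e$ is immediate rather than something to be verified by hand.
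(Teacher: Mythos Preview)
Your proposal is correct and matches the paper's approach: the paper does not give a formal proof but simply remarks, just before the lemma, that ``the number of `successes' in this series of Bernoulli trials is the number of edges of the resulting hypergraph,'' which is exactly your decomposition $E = \sum_e X_e$ into independent Bernoulli$(p)$ indicators. Your write-up merely spells out in detail what the paper leaves as a one-line observation.
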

%
%
We will also need an estimate for the expected number of subgraphs of a random $k$-graph that are isomorphic to a certain fixed hypergraph.
\begin{lem} \label{numberSubgraphs}
 Let $n \geq k \geq 2$, $p \in [0,1]$ and $H$ a $k$-graph with $n_H \leq n$ vertices and $e_H$ edges.
 Let $X_H(G)$ denote the number of subgraphs of $G \in G^{(k)}(n,p)$ that are isomorphic to $H$.

 Then $\mathbb{E}(X_H) \leq C_H \cdot n^{n_H} \cdot p^{e_H}$ 
where $C_H = \left|\left\{H' \subseteq \ueber{[n_H]}{k}\;\middle|\; H' \cong H\right\}\right| \cdot \frac{1}{n_H!}$.
\end{lem}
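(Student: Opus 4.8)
The plan is to write $X_H$ as a sum of indicator random variables, one for each potential copy of $H$ inside the complete $k$-graph on $[n]$, and then apply linearity of expectation together with the independence of the edge events in the model $G^{(k)}(n,p)$.

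First I would make precise what a \emph{copy} of $H$ is: since $H$ has $n_H$ vertices, any subgraph of $G$ isomorphic to $H$ has vertex set some $n_H$-element subset $S\subseteq[n]$ and edge set $E'\subseteq\ueber{S}{k}$ with $(S,E')\cong H$. For each such pair $(S,E')$ let $\mathbf{1}_{(S,E')}(G)=1$ if $E'\subseteq E(G)$ and $0$ otherwise, so that $X_H(G)=\sum_{(S,E')}\mathbf{1}_{(S,E')}(G)$. By the definition of $G^{(k)}(n,p)$ the $|E'|=e_H$ edges of $E'$ are present independently, each with probability $p$, hence $\mathbb{E}(\mathbf{1}_{(S,E')})=p^{e_H}$ for every such pair.

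Next I would count the pairs $(S,E')$. The number of choices for $S$ is $\ueber{n}{n_H}\le n^{n_H}/n_H!$. For a fixed $S$, any bijection $S\to[n_H]$ identifies the edge sets $E'\subseteq\ueber{S}{k}$ with $(S,E')\cong H$ with exactly the family $\{H'\subseteq\ueber{[n_H]}{k}\;|\;H'\cong H\}$, whose cardinality is $C_H\cdot n_H!$ by the definition of $C_H$. Multiplying, the number of copies of $H$ is at most $\frac{n^{n_H}}{n_H!}\cdot C_H\cdot n_H! = C_H\cdot n^{n_H}$, and linearity of expectation yields $\mathbb{E}(X_H)=(\text{number of copies})\cdot p^{e_H}\le C_H\cdot n^{n_H}\cdot p^{e_H}$.

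The computation is routine; the only point that needs care is the bookkeeping of the constant $C_H$ --- in particular, checking that the number of edge sets realizing a copy of $H$ on a fixed $n_H$-set really equals $C_H\cdot n_H!$ and is independent of the chosen identification with $[n_H]$, so that the two factors $n_H!$ cancel cleanly. If one prefers, one can instead sum over injective maps $[n_H]\hookrightarrow[n]$ and divide by $|\mathrm{Aut}(H)|$; this gives the same estimate and makes the cancellation transparent, at the cost of introducing the automorphism group of $H$.
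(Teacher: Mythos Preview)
Your proof is correct and follows essentially the same approach as the paper: express $X_H$ as a sum of indicators over all copies of $H$ in $\ueber{[n]}{k}$, note each has expectation $p^{e_H}$, and count the copies as $\ueber{n}{n_H}$ times the number of realizations on a fixed $n_H$-set, using $\ueber{n}{n_H}\le n^{n_H}/n_H!$. The paper's argument is identical in substance, only differing in that it does not separate the vertex set $S$ from the edge set $E'$ in its notation.
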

\begin{proof}
 For $M \subseteq \ueber{[n]}{k}$ let $\chi_M\!:\!G^{(k)}(n,p) \rightarrow \{0,1\}$ be the indicator function of $M$,
 i.e., for $G \in G^{(k)}(n,p)$ we have
\[
\chi_M(G)= \begin{cases} 
              1 & M \subseteq G; \\
              0 & \text{otherwise.}
	         \end{cases}
\]
Then
\[
 \mathbb{E}(\chi_{M}) = 0 \cdot P(M \nsubseteq G) + 1 \cdot P(M \subseteq G) = p^{|M|}.
\]
With this notation we can express $X_H(G)$ as the sum over $\chi_{H'}(G)$ for all ${H' \subseteq \ueber{[n]}{k}}$ that are isomorphic to $H$.
We thus get:
\[
 \mathbb{E}(X_H) = 
 \mathbb{E}\left(\sum_{H'} \chi_{H'}\right) =
 \sum_{H'} \mathbb{E}(\chi_{H'})
= \sum_{H'} p^{e_H},
\]
where the sums range over all $H' \subseteq \ueber{[n]}{k}$ that are isomorphic to $H$.
The number of such $H'$ is $c_H \ueber{n}{n_H}$
where $c_H$ is the number of subsets of $\ueber{[n_H]}{k}$ that are isomorphic to $H$.
Thus for $C_H = c_H \cdot \frac{1}{n_H!}$ we have
\[
 \mathbb{E}(X_H) = c_H \cdot \ueber{n}{n_H} \cdot p^{e_H} \leq C_H \cdot n^{n_H} \cdot p^{e_H}.
\]
\end{proof}
We will furthermore need two well known inequalities, Markov's inequality and Chernoff's inequality, to estimate the probability of events describing the difference of a random value and its expected value.
\begin{thm}[{Markov's inequality, e.g., \cite[Theorem 3.1]{Mitzenmacher.2005}}] \label{Markov}
 Let $X$ be a random variable with only non-negative values and let $a > 0$.
 Then
\[
P(X > a \cdot \mathbb{E}(X)) < 1/a. 
\]
\end{thm}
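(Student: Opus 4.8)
The plan is to prove Markov's inequality by the standard first-moment truncation: split the expectation of $X$ according to whether or not $X$ exceeds the threshold $t \mathrel{\mathop:}= a\,\mathbb{E}(X)$. Working on the finite probability space relevant here (so that $X$ takes only finitely many values and $\mathbb{E}(X) = \sum_{\omega} X(\omega)\,P(\{\omega\})$), non-negativity of $X$ lets me discard the atoms with $X(\omega)\le t$, and on the rest I bound $X(\omega)$ below by $t$:
\[
 \mathbb{E}(X) \;\ge\; \sum_{\omega\,:\,X(\omega)>t} X(\omega)\,P(\{\omega\}) \;\ge\; t\cdot P(X>t).
\]
For $t>0$ this already gives the non-strict bound $P(X>t)\le \mathbb{E}(X)/t = 1/a$, so the substance of the argument is done in two lines; the remaining work is only the bookkeeping needed to reach the strict inequality and to cover the degenerate case.

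For the strictness, I would first dispose of the case $P(X>t)=0$: there the claimed inequality reads $0<1/a$, which holds since $a>0$. So I may assume $P(X>t)>0$. Then $\mathbb{E}(X)\neq 0$ — for a non-negative $X$ on a finite space $\mathbb{E}(X)=0$ forces $X\equiv 0$, and in that situation $t=a\cdot 0=0$ and $P(X>t)=P(X>0)=0$, a contradiction. Hence $\mathbb{E}(X)>0$ and $t>0$. Now the middle step of the displayed chain becomes strict: the sum $\sum_{\omega:X(\omega)>t} X(\omega)\,P(\{\omega\})$ runs over a nonempty collection of atoms of positive total probability, and on each of them $X(\omega)>t$ strictly, so $\sum_{\omega:X(\omega)>t} X(\omega)\,P(\{\omega\}) > t\,P(X>t)$. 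Combining, $\mathbb{E}(X) > t\,P(X>t) = a\,\mathbb{E}(X)\,P(X>t)$, and dividing by $a\,\mathbb{E}(X)>0$ yields $P(X>t)<1/a$, as desired.

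There is no genuine obstacle in this proof; the only point that needs care is precisely the strict inequality in the statement, which is why I separate the trivial case $P(X>t)=0$ before arguing that on the complementary event the truncated sum strictly underestimates $\mathbb{E}(X)$. If one preferred to avoid the case split entirely, the same truncation argument carried out with integrals proves $P(X\ge s)\le \mathbb{E}(X)/s$ for every $s>0$ in full generality, and then $P(X>t)\le P(X\ge t)\le 1/a$ gives at least the non-strict version; but since the statement here asks for $<1/a$, I would use the discrete argument above.
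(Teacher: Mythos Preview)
The paper does not actually prove this statement; it is quoted as a standard result with a reference to \cite{Mitzenmacher.2005} and used as a black box in the proof of Proposition~\ref{lowerBoundEx(n,F)}. Your argument is the standard one and is correct, including the care you take with the strict inequality: once $P(X>t)>0$ you have $\mathbb{E}(X)>0$, hence $t>0$, and then $\mathbb{E}[X\cdot\mathbf{1}_{\{X>t\}}]>t\,P(X>t)$ because $X-t$ is strictly positive on an event of positive probability. There is nothing to compare against in the paper itself, so your proof simply fills in what the paper leaves to the cited textbook.
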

%
%
\begin{thm}[{Chernoff's inequality, e.g., \cite[Corollary 4.6]{Mitzenmacher.2005}}] \label{Chernoff}
 Let $X$ be a binomially distributed random variable and $0 < b < 1$.
 Then
\[
 P(|X-\mathbb{E}(X)| \geq b \cdot \mathbb{E}(X)) \leq 2 e^{-b^2 \mathbb{E}(X)/3}.
\]
\end{thm}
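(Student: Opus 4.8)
The plan is to prove the two one-sided tail estimates separately by the exponential-moment (Bernstein--Chernoff) method and then combine them by a union bound. The starting point is to write the binomially distributed $X$ as a sum $X = \sum_{i=1}^{N} Y_i$ of independent Bernoulli random variables with common success probability $p$, so that $\mu := \mathbb{E}(X) = Np$; in the application to $G^{(k)}(n,p)$ (Lemma \ref{numberEdges}) one has $N = \ueber{n}{k}$, but the argument below only uses independence and nonnegativity.

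For the upper tail, fix a parameter $t > 0$ and apply Markov's inequality (Theorem \ref{Markov}) to the nonnegative random variable $e^{tX}$, which gives
\[
 P\bigl(X \ge (1+b)\mu\bigr) \le \mathbb{E}\bigl(e^{tX}\bigr)\, e^{-t(1+b)\mu}.
\]
By independence $\mathbb{E}(e^{tX}) = \prod_{i=1}^{N}\mathbb{E}(e^{tY_i})$, and since each factor equals $1 + p(e^{t}-1) \le e^{p(e^{t}-1)}$ by the estimate $1 + x \le e^{x}$, we obtain $\mathbb{E}(e^{tX}) \le e^{\mu(e^{t}-1)}$. Hence $P(X \ge (1+b)\mu) \le e^{\mu(e^{t}-1-t(1+b))}$, and minimizing the exponent over $t$ by setting $t = \ln(1+b)$ yields the clean bound $\bigl(e^{b}(1+b)^{-(1+b)}\bigr)^{\mu}$. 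The lower tail is handled symmetrically: for $t > 0$ apply Markov to $e^{-tX}$, bound $\mathbb{E}(e^{-tX}) \le e^{\mu(e^{-t}-1)}$ the same way, and optimize at $t = -\ln(1-b)$ to get $P(X \le (1-b)\mu) \le \bigl(e^{-b}(1-b)^{-(1-b)}\bigr)^{\mu}$.

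The step I expect to be the main obstacle is the elementary but delicate pair of numerical inequalities needed to convert these into the stated exponential form. For the upper tail the required claim is
\[
 \frac{e^{b}}{(1+b)^{1+b}} \le e^{-b^{2}/3} \qquad \text{for } 0 < b < 1,
\]
equivalently $g(b) := b - (1+b)\ln(1+b) + \tfrac{b^{2}}{3} \le 0$ on $(0,1)$. This follows from $g(0) = 0$ together with a sign analysis of $g'(b) = -\ln(1+b) + \tfrac{2b}{3}$: one has $g'(0) = 0$ and $g''(b) = \tfrac{2}{3} - \tfrac{1}{1+b}$, so $g''$ changes sign only at $b = \tfrac12$; checking that $g'$ is negative at $b = \tfrac12$ and at $b = 1$ shows $g' < 0$ throughout $(0,1)$, whence $g$ is strictly decreasing and stays below $g(0) = 0$. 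The constant $3$ is exactly what the Taylor expansion $\ln(1+b) = b - \tfrac{b^{2}}{2} + \cdots$ forces here. The analogous (in fact stronger) lower-tail inequality $\tfrac{e^{-b}}{(1-b)^{1-b}} \le e^{-b^{2}/2}$ is proved the same way, via $h(b) := -b - (1-b)\ln(1-b) + \tfrac{b^{2}}{2}$ with $h(0) = h'(0) = 0$ and $h''(b) = \tfrac{-b}{1-b} < 0$.

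With both inequalities in hand the upper tail is at most $e^{-b^{2}\mu/3}$ and the lower tail at most $e^{-b^{2}\mu/2} \le e^{-b^{2}\mu/3}$. Finally, combining them through
\[
 P\bigl(|X-\mu| \ge b\mu\bigr) \le P\bigl(X \ge (1+b)\mu\bigr) + P\bigl(X \le (1-b)\mu\bigr) \le 2\, e^{-b^{2}\mu/3}
\]
produces the claimed factor of $2$ and completes the proof.
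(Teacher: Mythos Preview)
Your proof is correct and is the standard exponential-moment argument for Chernoff's inequality. Note, however, that the paper does not actually prove this statement: it is quoted as a known result with a reference to \cite[Corollary 4.6]{Mitzenmacher.2005}, and no proof is given. So there is nothing in the paper to compare against; what you have written is essentially the textbook proof behind the cited reference.
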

The following lemma, with which we conclude this section, is a statement on general probability spaces, the generalization of the simple rule
\[
 P(A), P(B) > \frac{1}{2} \implies P(A\cap B)>0.
\]
\begin{lem} \label{intersectionEvents}
 Let $\Omega$ be a probability space and $A_1, \ldots, A_n \subseteq \Omega$ events with
 $P(A_i) > \frac{n-1}{n}$ for all $1\leq i \leq n$.
 Then the intersection of the $A_i$ has positive probability: $P(\bigcap_{i=1}^{n}A_i)>0$.
\end{lem}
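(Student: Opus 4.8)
The plan is to pass to complements and apply the union bound. Set $B_i := \Omega \setminus A_i$ for $1 \le i \le n$. The hypothesis $P(A_i) > \frac{n-1}{n}$ is equivalent to $P(B_i) = 1 - P(A_i) < \frac{1}{n}$ for every $i$.

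Next I would invoke countable (here finite) subadditivity of the probability measure:
\[
 P\Bigl(\bigcup_{i=1}^n B_i\Bigr) \;\le\; \sum_{i=1}^n P(B_i) \;<\; \sum_{i=1}^n \frac1n \;=\; 1 .
\]
Since $\bigcap_{i=1}^n A_i = \Omega \setminus \bigcup_{i=1}^n B_i$ by De Morgan, taking complements gives
\[
 P\Bigl(\bigcap_{i=1}^n A_i\Bigr) \;=\; 1 - P\Bigl(\bigcup_{i=1}^n B_i\Bigr) \;>\; 0 ,
\]
which is the claim.

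There is essentially no obstacle here; the only point requiring any care is that the union bound must be applied to the \emph{finite} family $\{B_i\}$, so no convergence issues arise, and the strict inequality $P(B_i) < \frac1n$ for each $i$ is exactly what is needed to make the summed bound strictly less than $1$ (a non-strict hypothesis would only give $P(\bigcap A_i) \ge 0$, which is vacuous).
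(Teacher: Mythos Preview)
Your proof is correct and is essentially identical to the paper's own argument: pass to complements, apply the union bound to get $P(\bigcup_i \overline{A_i}) < 1$, and conclude via De Morgan. The paper's proof differs only in notation (it writes $\overline{A_i}$ where you write $B_i$) and omits your closing remark about the necessity of the strict inequality.
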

\begin{proof}
 For an event $A$ let $\overline{A}$ denote the complementary event $\Omega\setminus A$.
 We have $P(\overline{A_i}) < \frac{1}{n}$ and therefore $P(\bigcup_{i=1}^{n}\overline{A_i}) \leq \sum_{i=1}^{n} P(\overline{A_i}) < 1.$
 Hence,
\[
 P(\bigcap_{i=1}^{n}A_i) = 1 - P(\overline{\bigcap_{i=1}^{n}A_i}) = 1 - P(\bigcup_{i=1}^{n}\overline{A_i}) > 0.
\]

\end{proof}

\subsection{A Lower Bound on \texorpdfstring{$\ex(n, \mathcal{F}_d)$}{}}

Let $\mathcal{F}$ be a finite family of $k$-graphs in which each member has at least $2$ edges.
We will now use the probabilistic tools we just introduced to prove a lower bound for $\ex(n,\mathcal{F})$ for large enough $n$.
We will show that if every hypergraph in $\mathcal{F}$ has less than $n_0$ vertices and more than $m_0>1$ edges, there is a $C>0$ such that for large $n$ there has to be an $\mathcal{F}$-free $k$-graph with $n$ vertices and at least $C \cdot n^{k-\frac{n_0 - k}{m_0 - 1}}$ edges.

At the end of this section we will then apply this to the family $\mathcal{F}_d$, achieving 
\[
\ex(n,\mathcal{F}_d) \geq C \cdot n^{d+1-\frac{2(d+1)}{3^{d+1}-1}} 
\]
for some $C>0$ and for large enough $n$.

 The idea for the proof of the general lower bound is similar to the one for proving the existence of graphs with large girth and large chromatic number (e.g., \cite[p. 38]{Alon.2000}):
We will show that there is a hypergraph $G$ which only contains ``few'' of the forbidden subgraphs in $\mathcal{F}$ while having ``a lot'' of edges, by proving that the probability of the event ``$G \in  G^{(k)}(n,p)$ has many edges and only few forbidden subgraphs'' is greater than zero.
We can choose the numbers of edges and subgraphs such that we can then delete one edge from every forbidden subgraph in $G$ and still keep sufficiently many edges.
 
\begin{prop}[\cite{Schacht}]\label{lowerBoundEx(n,F)}
 For $k \geq 2$ let $\mathcal{F}$ be a finite family of $k$-uniform hypergraphs and $n_0 \geq k$, $m_0 > 1$ with $n_0 < k \cdot m_0$ such that
\[
 n(F) \leq n_0 \text{ and } e(F) \geq m_0 \text{ for all } F \in \mathcal{F}.
\]
 Then there is $C = C(k) >0$ and $N \in \mathbb{N}$ such that
for every $n \geq N$ there exists an $\mathcal{F}$-free $k$-graph on $n$ vertices
 that has at least $C \cdot n^{k-\frac{n_0 - k}{m_0 - 1}}$ edges, i.e., 
\[
 \ex(n,\mathcal{F}) = \Omega(n^{k-\frac{n_0 - k}{m_0 - 1}}).
\]
\end{prop}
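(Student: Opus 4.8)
I would prove this by the probabilistic deletion method, in exactly the style of the classical construction of graphs with large girth and large chromatic number (cf. the reference cited just before the statement). The idea is to choose a random $k$-graph at the critical edge density, observe that it has many edges but only a controlled number of copies of members of $\mathcal{F}$, and then delete one edge from each forbidden copy, losing only a constant fraction of the edges. Concretely, set $\alpha = \frac{n_0-k}{m_0-1}$ and work in $G^{(k)}(n,p)$ with $p = p(n) = c\,n^{-\alpha}$, where $c = c(k,\mathcal{F}) > 0$ is a small constant to be fixed at the end.

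\textbf{The two expectations.} By Lemma \ref{numberEdges}, $\mathbb{E}(E) = \ueber{n}{k}\,p \sim \frac{c}{k!}\,n^{k-\alpha}$. Let $Y := \sum_{F\in\mathcal{F}} X_F$ count the copies of forbidden subgraphs. Since $\mathcal{F}$ is finite, $n(F)\le n_0$, $e(F)\ge m_0$ and $p\le 1$, Lemma \ref{numberSubgraphs} gives
\[
 \mathbb{E}(Y) \;=\; \sum_{F\in\mathcal{F}} \mathbb{E}(X_F) \;\le\; \sum_{F\in\mathcal{F}} C_F\, n^{n(F)} p^{e(F)} \;\le\; \Big(\textstyle\sum_{F\in\mathcal{F}} C_F\Big)\, n^{n_0} p^{m_0} \;=\; C'\, c^{m_0}\, n^{\,n_0-\alpha m_0},
\]
with $C' = \sum_{F\in\mathcal{F}} C_F < \infty$. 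The algebraic identity that makes the argument work is
\[
 n_0 - \alpha m_0 \;=\; n_0 - \tfrac{(n_0-k)m_0}{m_0-1} \;=\; \tfrac{k m_0 - n_0}{m_0-1} \;=\; k - \alpha,
\]
so $\mathbb{E}(E)$ and $\mathbb{E}(Y)$ have the \emph{same} order $n^{k-\alpha}$; and the hypothesis $n_0 < k m_0$ is precisely what makes the exponent $k-\alpha = \frac{km_0-n_0}{m_0-1}$ strictly positive, hence $\mathbb{E}(E)\to\infty$.

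\textbf{Choosing $c$ and deleting.} Because $m_0 - 1 > 0$, I can pick $c$ small enough that $C'c^{m_0} \le \frac{c}{16\,k!}$; then $\mathbb{E}(Y) \le \frac{1}{16}\,\mathbb{E}(E)$ for all large $n$. Markov's inequality (Theorem \ref{Markov}) gives $P\big(Y \ge \tfrac14\mathbb{E}(E)\big) \le \tfrac14$, and Chernoff's inequality (Theorem \ref{Chernoff}) applied to the binomial $E$, together with $\mathbb{E}(E)\to\infty$, gives $P\big(E \le \tfrac12\mathbb{E}(E)\big)\to 0$. Hence by a union bound (or Lemma \ref{intersectionEvents}), for every large $n$ there is a $k$-graph $G$ on $[n]$ with $E(G) > \tfrac12\mathbb{E}(E)$ and $Y(G) < \tfrac14\mathbb{E}(E)$ at once. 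Removing one edge from each of the fewer than $\tfrac14\mathbb{E}(E)$ copies of members of $\mathcal{F}$ in $G$ yields a $k$-graph $G'$ that is $\mathcal{F}$-free — a copy of some $F\in\mathcal{F}$ in $G'$ would be a copy in $G$ and would therefore already have had an edge deleted — and $G'$ still has at least $\tfrac12\mathbb{E}(E) - \tfrac14\mathbb{E}(E) = \tfrac14\mathbb{E}(E) \ge C\, n^{k-\alpha}$ edges for a suitable $C>0$ and all $n\ge N$. This is the claimed bound $\ex(n,\mathcal{F}) = \Omega(n^{k-\frac{n_0-k}{m_0-1}})$.

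\textbf{Main obstacle and the application.} The probabilistic bookkeeping is routine; the one point that needs care is recognizing that at the critical density $p\asymp n^{-\alpha}$ the expected number of forbidden subgraphs is \emph{comparable} to, not negligible against, the expected number of edges, so a plain first-moment argument cannot work and the deletion step is essential — and that the assumption $n_0 < k m_0$ is exactly what guarantees $\mathbb{E}(E)\to\infty$, which is needed for the concentration of $E$. Finally, to deduce $\ex(n,\mathcal{F}_d) = \Omega\big(n^{\,d+1-\frac{2(d+1)}{3^{d+1}-1}}\big)$ one applies the proposition with $k = d+1$, $n_0 = 3(d+1)$ (the maximum number of vertices of a member of $\mathcal{F}_d$, attained when all join factors are single vertices) and $m_0 = 3^{d+1}$: by Lemma \ref{dimJoinNiceCompl} a $d$-face of a join $K_1\ast\cdots\ast K_s$ in $\mathcal{F}_d$ must be a disjoint union of maximal faces, so the number of $d$-faces is $\prod_i \ueber{2d_i+3}{d_i+1}$, which is minimized by the configuration with all $d_i = 0$, namely $K^{(d+1)}_{3,\dots,3}$, giving $3^{d+1}$; then $n_0 < k m_0$ holds and $k - \frac{n_0-k}{m_0-1} = (d+1) - \frac{2(d+1)}{3^{d+1}-1}$.
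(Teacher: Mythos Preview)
Your proof is correct and follows essentially the same probabilistic deletion argument as the paper: random $k$-graph at density $p = c\,n^{-(n_0-k)/(m_0-1)}$, Chernoff for the edge count, Markov for the forbidden copies, then delete one edge per copy. The only cosmetic differences are that you aggregate all forbidden copies into a single variable $Y$ and apply Markov once (the paper applies it to each $X_F$ separately and then invokes Lemma~\ref{intersectionEvents} with $l=|\mathcal{F}|+1$ events), and you make the key identity $n_0-\alpha m_0 = k-\alpha$ explicit up front rather than solving for the exponent at the end.
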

\begin{proof}
 Let $p=c_1n^{-c_2}$ for some $c_1,c_2$ such that $0 \leq p \leq 1$ for all $n \in \mathbb{N}$.
 For $G \in G^{(k)}(n,p)$ and $F \in \mathcal{F}$ let $X_F(G)$ denote the number of subgraphs of $G$ that are isomorphic to $F$; $E(G)$ shall denote the number of edges of $G$.

 We want to show that we can choose $c_1$ and $c_2$ such that a suitable event of the form
 ``$E(G)$ is large and $X_F(G)$ is small for all $F \in \mathcal{F}$'' has positive probability.
 Lemma \ref{intersectionEvents} shows that such an intersection of events has probability greater than zero if each of the events occurs with a certain probability.
 Markov's Inequality (Theorem \ref{Markov}) and Chernoff's Inequality (Theorem \ref{Chernoff}) are the tools we will use to estimate the probabilities of the single events.
 
 Let $l= |\mathcal{F}| + 1$.
 The events we want to consider are $A_F \mathrel{\mathop:}= \left\{X_F \leq l \cdot \mathbb{E}(X_F)\right\}$ for $F \in \mathcal{F}$
 and $A \mathrel{\mathop:}= \left\{|E - \mathbb{E}(E)| < \frac{1}{2} \cdot \mathbb{E}(E)\right\} = \left\{\frac{1}{2} \cdot \mathbb{E}(E) < E < \frac{3}{2} \cdot \mathbb{E}(E)\right\}$.

 If the intersection of these events occurs, i.e., $G \in \bigcap_{F \in \mathcal{F}} A_F \cap A$, then $G$ contains at most $l \cdot \mathbb{E}(X_F)$ copies of each 
$F \in \mathcal{F}$ and has at least $\frac{1}{2} \cdot \mathbb{E}(E)$ edges.

 Do these events have large enough probability?
 For $A_F$, Markov's Inequality shows that
\[
 P(A_F) = 1 - P(X_F > l \cdot \mathbb{E}(X_F)) > \frac{l-1}{l} \text{ for every } F \in \mathcal{F}.
\]
To estimate the probability of the event $A$ we first consider its complementary event.
 Because $E$ is a binomially distributed random variable with parameters $\ueber{n}{k}$ and $p$ (Lemma \ref{numberEdges}), Chernoff's Inequality yields
\begin{align*}
 P\left(|E - \mathbb{E}(E)| \geq \frac{1}{2} \cdot \mathbb{E}(E)\right) &\leq 2 e^{-\frac{1}{4} \mathbb{E}(E)/3}\\
               						     &= 2 e^{-\frac{1}{12} \ueber{n}{k} p}\\
           			              	             &\leq  2 e^{-\frac{1}{12} \frac{1}{2k!} n^k c_1 \cdot n^{-c_2}}
                                                                \text{ for } n \text{ large enough}\\
						 	     &< \frac{1}{l}  \text{ for } c_2 < k \text{ and } n \text{ large enough}.
\end{align*}
 Hence, if $c_2 < k$ and $n$ is large enough, we get 
\[
 P(A) = 1 -  P\left(|E - \mathbb{E}(E)| \geq \frac{1}{2} \cdot \mathbb{E}(E)\right) > \frac{l-1}{l}.
\]
 Lemma \ref{intersectionEvents} now tells us that
\[
 P\left(\bigcap_{F \in \mathcal{F}} A_F \cap A\right) > 0.
\]
 Thus, for $c_2 < k$ and large enough $n$ there is a $k$-graph $G$ on $n$ vertices that contains at most $l \cdot \mathbb{E}(X_F)$ copies of each $F \in \mathcal{F}$ and has at least $\frac{1}{2}\mathbb{E}(E)$ edges.

 Now, we want to choose $c_1$ and $c_2$ such that after removing one edge from every forbidden subgraph $F \in \mathcal{F}$ sufficiently many edges remain.
 The number of edges we have to remove is at most $\sum_{F \in \mathcal{F}} l \cdot \mathbb{E}(X_F)$.
 So, if we want to keep at least half of the edges we have to have
\begin{equation}\label{gleichung}
 \sum_{F \in \mathcal{F}} l \cdot \mathbb{E}(X_F) < \frac{1}{4} \mathbb{E}(E).
\end{equation}
 How large is $\sum_{F \in \mathcal{F}} l \cdot \mathbb{E}(X_F)$?
 Lemma \ref{numberSubgraphs} tells us that for $F \in \mathcal{F}$ there exists $C_F \geq \frac{1}{n(F)!}$ such that
\[
 \mathbb{E}(X_F) \leq C_F \cdot n^{n(F)} \cdot p^{e(F)}.
\]
Therefore, setting $\bar{C}=l \cdot (\max_{F \in \mathcal{F}}C_F) \cdot |\mathcal{F}| = l (l-1) (\max_{F \in \mathcal{F}}C_F) >0$, we have
\[
 \sum_{F \in \mathcal{F}} l \cdot \mathbb{E}(X_F) \leq  l \cdot \sum_{F \in \mathcal{F}} C_F \cdot n^{n(F)} \cdot p^{e(F)} \leq  \bar{C} \cdot n^{n_0} \cdot p^{m_0}.
\]
What about the other side of \eqref{gleichung}?
With $\tilde{C} = \frac{1}{8k!} > 0$ we get
\[
  \frac{1}{4} \mathbb{E}(E) = \frac{1}{4} \ueber{n}{k} p \geq \frac{1}{4} \frac{1}{2k!} n^k p = \tilde{C} n^k p.
\]
Hence, \eqref{gleichung} is fulfilled if $\bar{C} \cdot n^{n_0} \cdot p^{m_0} < \tilde{C} n^k p$, which is equivalent to
\[
  C^* < n^{(k-n_0)} p^{(1-m_0)} \text{ where } C^* = \frac{\bar{C}}{\tilde{C}} = 8k! l (l-1) (\max_{F \in \mathcal{F}}C_F) > 1.
\]
Because $p=c_1n^{-c_2}$,
\[
 n^{(k-n_0)} p^{(1-m_0)} = n^{(k-n_0-c_2(1-m_0))} c_1^{(1-m_0)} = n^{(c_2(m_0-1)-(n_0-k))} c_1^{-(m_0-1)}.
\]
This means, we want to find values for $c_1$ and $c_2$ such that
\begin{equation}\label{gleichung2}
 c_1^{(m_0-1)} C^* < n^{(c_2(m_0-1)-(n_0-k))}.
\end{equation}
If we choose
\[
 c_2 = \frac{(n_0-k)}{(m_0-1)} \text{ and } c_1 < \left(\frac{1}{C^*}\right)^{\frac{1}{m_0-1}},
\]
we have $n^{(k-n_0-c_2(1-m_0))} = 1$ and the left side of \eqref{gleichung2} is $<1$.
Thus, \eqref{gleichung2}, and therefore also \eqref{gleichung}, is fulfilled.

Is this an admissible choice of these parameters?
Two things have to be considered: We need $0<p=c_1n^{-c_2}<1$ and, to guarantee that $P(A)$ is large enough, $c_2 < k$.

If we choose $c_1 > 0$ we have $p>0$.
Since 
\[
 c_2 = \frac{(n_0-k)}{(m_0-1)} > 0 \text{ and }
 c_1 < \left(\frac{1}{C^*}\right)^{\frac{1}{m_0-1}} < 1
\]
we also get that $p<1$.
The last condition is also fulfilled because by assumption $n_0 < k \cdot m_0$ which is equivalent to
\[
 \frac{(n_0-k)}{(m_0-1)} < k.
\]

Thus, for $c_2 = \frac{n_0-k}{m_0-1}$ and suitable $c_1$ after removing at most half of the edges of $G$ we get an $\mathcal{F}$-free graph $G'$ that has at least
$\frac{1}{4} \mathbb{E}(E) > \tilde{C} n^k c_1 n^{-\frac{n_0-k}{m_0-1}} = C n^{k-\frac{n_0-k}{m_0-1}}$ edges for $C= \tilde{C} \cdot c_1 >0$.
\end{proof}
\begin{rem}
 Proving $\ex(n,\mathcal{F}) \geq C \cdot n^{k-a}$ for some $\frac{n_0 - k}{m_0 - 1} < a < k$ is a little bit easier.
 One can simply set $p=n^{-a}$ and, by proceeding as above, get to ${C^* < n^{(a(m_0-1)-(n_0-k))}}$ which is fulfilled for $a < \frac{n_0 - k}{m_0 - 1}$ and large enough $n$.
\end{rem}

To apply this result to the family $\mathcal{F}_d$, we need to know an upper bound for the number of vertices and a lower bound for the number of edges of the hypergraphs in $\mathcal{F}_d$.
The following lemma shows that the hypergraph $K^{(d+1)}_{3,3,\ldots,3}$, which corresponds to the complex $((\Delta_2)^{\leq 0})^{*(d+1)}$, determines both of these bounds.

\begin{lem}\label{maxVerticesminEdges}
 Let $d\geq 1$.
Among all $d$-complexes corresponding to hypergraphs in $\mathcal{F}_d$ the complex $((\Delta_2)^{\leq 0})^{*(d+1)}$ has the maximal number of vertices and the minimal number of $d$-simplices.
\end{lem}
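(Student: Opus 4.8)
The plan is to invoke Lemma~\ref{dimJoinNiceCompl}, which describes $\mathcal{F}_d$ completely: every member is $(K_1*\cdots*K_s)^d$ with $K_i=(\Delta_{2d_i+2})^{\leq d_i}$ and $\sum_{i=1}^s d_i = d-s+1$, equivalently $\sum_{i=1}^s(d_i+1)=d+1$. Both quantities in the statement then become functions of the data $(d_1,\dots,d_s)$, and the choice $d_1=\cdots=d_s=0$ (which forces $s=d+1$) is precisely $((\Delta_2)^{\leq 0})^{*(d+1)}$; so the task reduces to showing that this choice is optimal for both.

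For the vertex count I would just compute directly: $n(K_1*\cdots*K_s)=\sum_{i=1}^s n(K_i)=\sum_{i=1}^s(2d_i+3)=2(d-s+1)+3s=2d+2+s$. Since $d_i\geq 0$ and $\sum_{i=1}^s d_i=d-s+1\geq0$, we get $s\leq d+1$, with equality iff all $d_i=0$; hence the vertex number is at most $3d+3$, attained exactly by $((\Delta_2)^{\leq 0})^{*(d+1)}$.

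For the number of $d$-simplices, the key observation is that a $d$-simplex of the join is a disjoint union $F_1\mathbin{\dot\cup}\cdots\mathbin{\dot\cup}F_s$ with $F_i\in K_i$ and $|F_1|+\cdots+|F_s|=d+1$; since $|F_i|\leq d_i+1$ and $\sum_{i=1}^s(d_i+1)=d+1$, necessarily $|F_i|=d_i+1$ for every $i$, i.e., each $F_i$ must be a facet of $K_i$. Hence $f_d(K_1*\cdots*K_s)=\prod_{i=1}^s \ueber{2d_i+3}{d_i+1}$. (Equivalently, one can argue uniformly via the polynomial $P_L(x)=\sum_{F\in L}x^{|F|}$, which is multiplicative under joins, so that $f_d$ is the coefficient of $x^{d+1}$ in $\prod_i P_{K_i}(x)$ and only the top-degree terms contribute.) Putting $m_i=d_i+1\geq1$, it then suffices to prove the elementary inequality $\ueber{2m+1}{m}\geq 3^m$ for every integer $m\geq1$, with equality only at $m=1$: this follows by induction on $m$, the inductive step reducing to $\ueber{2m+3}{m+1}\big/\ueber{2m+1}{m}=\frac{2(2m+3)}{m+2}\geq 3$, which holds for all $m\geq0$ and is strict for $m\geq1$. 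Multiplying over $i$ gives $f_d\geq \prod_i 3^{m_i}=3^{d+1}$, with equality iff every $m_i=1$, i.e., iff the complex is $((\Delta_2)^{\leq 0})^{*(d+1)}$, which indeed has $3^{d+1}$ facets.

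The only genuinely nontrivial step is the middle one: recognizing that the dimension constraint forces a facet to be chosen out of each join factor, so that $f_d$ collapses to a clean product of binomial coefficients. Once that is in place, the remaining binomial inequality is a routine induction and the vertex-count comparison is immediate.
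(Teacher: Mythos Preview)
Your proof is correct and follows the same overall strategy as the paper: invoke Lemma~\ref{dimJoinNiceCompl}, compute the vertex count as $2d+s+2$ (maximal at $s=d+1$), and write $f_d$ as the product $\prod_i\binom{2d_i+3}{d_i+1}$. The one substantive difference is in how the product is minimized. The paper only shows that the sequence $\binom{2n+3}{n+1}$ is strictly increasing and concludes that ``each factor is minimal for $d_i=0$''; taken literally this is not quite enough, since the number of factors is not fixed but varies with the partition. Your inequality $\binom{2m+1}{m}\geq 3^m$ (with $m=d_i+1$) closes this gap cleanly: multiplying over $i$ and using $\sum m_i=d+1$ gives the lower bound $3^{d+1}$ directly, with equality exactly when every $m_i=1$. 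You also justify the product formula for $f_d$ explicitly via the forced-facet observation, which the paper simply asserts. So your argument is essentially the paper's, but tightened at the one place where the paper is terse.
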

\begin{proof}
 Let $K$ be a simplicial complex that corresponds to an element of $\mathcal{F}_d$.
By Lemma \ref{dimJoinNiceCompl} there are $r$ and $d_1,d_2,\ldots,d_r$ with $d = r - 1 + \sum_{i=1}^r d_i$ such that $K = K_1*K_2*\ldots*K_r$ where $K_i = (\Delta_{2d_i+2})^{\leq d_i}$.

 We have $V(K) = \sum_{i=1}^{r}2d_i+3 = 2(d_1+d_2+\ldots+d_r) +3r = 2d +r + 2$.
 Hence, the number of vertices of $K$ is maximal if $r$ is maximal.
 But $r= d+1-(d_1+d_2+\ldots+d_r)$. So, $r$ is maximal if $d_i=0$ for all $1 \leq i \leq r$.
 The number of $d$-simplices of $K$ is $f_d(K) = \prod_{i=1}^{r}f_{d_i}(K_i) = \prod_{i=1}^{r}\ueber{2d_i+3}{d_i+1}$.
 The sequence $\ueber{2n+3}{n+1}$ is strictly increasing because 
\begin{equation*}
 \begin{split}
 \ueber{2(n+1)+3}{(n+1)+1} & = \ueber{2n+5}{n+2}\\
                           & = \ueber{2n+4}{n+1} + \ueber{2n+4}{n+2}\\
                           & = \ueber{2n+3}{n}+\ueber{2n+3}{n+1}+\ueber{2n+3}{n+1}+\ueber{2n+3}{n+2}\\
                           & > \ueber{2n+3}{n+1}.
\end{split}
\end{equation*}
Hence, each of the factors is minimal for $d_i=0$.
\end{proof}

With this we get the desired lower bound for $\ex(n,\mathcal{F}_d)$:

\begin{cor}[\cite{Schacht}]\label{lowerBoundExEffDee}
 Let $d\geq 1$, and $\mathcal{F}_d$ as defined in Definition \ref{EffDee}.
 Then there is a $C>0$ and $N \in \mathbb{N}$ such that
\[
 \ex(n,\mathcal{F}_d) \geq C \cdot n^{d+1-\frac{2(d+1)}{3^{d+1}-1}} \text{ for } n \geq N.
\]
\end{cor}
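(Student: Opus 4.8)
The plan is to apply Proposition \ref{lowerBoundEx(n,F)} directly to the family $\mathcal{F}_d$. Every complex underlying a hypergraph in $\mathcal{F}_d$ is a pure $d$-dimensional complex, so the members of $\mathcal{F}_d$ are $(d+1)$-uniform hypergraphs; we therefore take $k = d+1$ in Proposition \ref{lowerBoundEx(n,F)}. To invoke it we must supply a number $n_0 \geq k$ bounding $n(F)$ from above and a number $m_0 > 1$ bounding $e(F)$ from below, simultaneously for all $F \in \mathcal{F}_d$, and we must verify the arithmetic condition $n_0 < k \cdot m_0$.

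First I would read off these two bounds from Lemma \ref{maxVerticesminEdges}: among all $d$-complexes corresponding to hypergraphs in $\mathcal{F}_d$, the complex $((\Delta_2)^{\leq 0})^{*(d+1)}$ --- which, as observed after Lemma \ref{partiteness}, corresponds to the hypergraph $K^{(d+1)}_{3,3,\ldots,3}$ --- simultaneously has the maximal number of vertices and the minimal number of $d$-simplices. Hence every $F \in \mathcal{F}_d$ satisfies $n(F) \leq 3(d+1)$ and $e(F) \geq 3^{d+1}$, and we set $n_0 = 3(d+1)$ and $m_0 = 3^{d+1}$. Since $d \geq 1$ we have $m_0 = 3^{d+1} \geq 9 > 1$ (so in particular every member of $\mathcal{F}_d$ has at least two edges), while $n_0 = 3(d+1) \geq d+1 = k$, and the condition $n_0 < k \cdot m_0$ amounts to $3(d+1) < (d+1)\, 3^{d+1}$, i.e.\ $3 < 3^{d+1}$, which holds.

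With these choices, Proposition \ref{lowerBoundEx(n,F)} produces a constant $C = C(d+1) > 0$ and an $N \in \mathbb{N}$ such that for all $n \geq N$ there is an $\mathcal{F}_d$-free $(d+1)$-graph on $n$ vertices with at least $C \cdot n^{k - \frac{n_0 - k}{m_0 - 1}}$ edges. It then only remains to simplify the exponent:
\[
 k - \frac{n_0 - k}{m_0 - 1} = (d+1) - \frac{3(d+1) - (d+1)}{3^{d+1} - 1} = (d+1) - \frac{2(d+1)}{3^{d+1} - 1},
\]
which is precisely the exponent in the statement of the corollary, so $\ex(n, \mathcal{F}_d) \geq C \cdot n^{d+1 - \frac{2(d+1)}{3^{d+1}-1}}$ for $n \geq N$, as required.

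I do not anticipate a genuine obstacle: the substantive work has already been done in Proposition \ref{lowerBoundEx(n,F)} (the probabilistic argument of deleting one edge per forbidden subgraph) and in Lemma \ref{maxVerticesminEdges} (the extremal structure of $\mathcal{F}_d$). The only points requiring mild care are confirming that $d+1$ is the correct uniformity for the hypergraphs in $\mathcal{F}_d$, noting that the member of $\mathcal{F}_d$ extremal for the vertex count coincides with the one extremal for the edge count --- which is exactly why $n_0$ and $m_0$ may be taken from a single complex --- and checking the side condition $n_0 < k\, m_0$ so that Proposition \ref{lowerBoundEx(n,F)} genuinely applies.
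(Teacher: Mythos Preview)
Your proof is correct and follows essentially the same approach as the paper: apply Lemma \ref{maxVerticesminEdges} to read off $n_0=3(d+1)$ and $m_0=3^{d+1}$ from the single extremal member $K^{(d+1)}_{3,3,\ldots,3}$, then plug these into Proposition \ref{lowerBoundEx(n,F)} with $k=d+1$ and simplify the exponent. If anything, you are slightly more careful than the paper in explicitly verifying the side conditions $n_0\geq k$, $m_0>1$, and $n_0<k\,m_0$.
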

\begin{proof}
 Lemma \ref{maxVerticesminEdges} tells us that $n(F) \leq n(K^{(d+1)}_{3,3,\ldots,3})$ and $e(F) \geq e(K^{(d+1)}_{3,3,\ldots,3})$ for all $F \in \mathcal{F}_d$.
The $(d+1)$-graph $K^{(d+1)}_{3,3,\ldots,3}$ has $3\cdot(d+1)$ vertices and $3^{d+1}$ edges.
Thus by Proposition \ref{lowerBoundEx(n,F)} there is $C>0$ and $N \in \mathbb{N}$ such that
\[
 \ex(n,\mathcal{F}_d) \geq C \cdot n^{d+1-\frac{3(d+1)-(d+1)}{3^{d+1}-1}} \text{ for } n \geq N,
\]
which is what we had to show.
\end{proof}
To conclude the part on upper bounds let us summarize our results.
For all~$d$ and for large $n$ we now have:
\[
 C \cdot n^{d+1-\frac{2(d+1)}{3^{d+1}-1}} \leq \ex(n,\mathcal{F}_d) < n^{d+1-\frac{1}{3^d}},
\]
which shows that improving the upper bound on, or even determining $\ex(n,\mathcal{F}_d)$ could not close the gap between the upper bound of $O(n^{d+1-\frac{1}{3^d}})$ and the lower bound of $\Omega(n^d)$ for 
$ \max\left\{f_d(K)\;\middle|\; \dim(K)=d, |V(K)|=n, K \hookrightarrow \R^{2d}\right\}$ by much.
For $d=2$ we get
\[
 C \cdot n^{3-\frac{3}{13}} \leq \ex(n,\mathcal{F}_2) < n^{3-\frac{1}{9}}
\]
for large enough $n$.
Observe that for the case $d=1$ we proved the existence of a graph containing neither $K_5$ nor $K_{3,3}$ with at least $C \cdot n^{\frac{3}{2}}$ edges for \mbox{large $n$}.
However, simple explicit constructions for such graphs are already known. One example is the following:
Since both $K_5$ and $K_{3,3}$ contain a quadrilateral $C_4$, quadrilateral-free graphs are examples of $K_5$- and $K_{3,3}$-free graphs. It is known that for any prime power $q$ there is a graph with $n=q^2+q+1$ vertices and $\frac{1}{2}q^2(q+1)=\Theta(n^{\frac{3}{2}})$ edges, defined by the points and lines of the projective plane $PG(q,2)$ over the field $\mathbb{F}_q$, that doesn't contain a quadrilateral (\cite{Erdos.1966}, \cite{Brown.1966}).
\chapter*{Further Thoughts}
\addcontentsline{toc}{chapter}{Further Thoughts}
Let us summarize our results.
In Problem \ref{dieFrage} we asked for
\[
 \max\left\{f_d(K) \;\middle|\; \dim(K)=d,\, |V(K)|=n,\, ||K|| \hookrightarrow \R^{\dR} \right\}
\]
for fixed $n \geq d+1$ and $d,\dR \geq 1$ such that $d \leq \dR \leq 2d$.

We could achieve a lower bound of $f_d(C_{\dR + 1}(n)) = \Omega(n^{\lceil\frac{\dR}{2}\rceil})$ for all $d,\dR \geq 1$ such that $d \leq \dR \leq 2d$ in Chapter \ref{lowerBounds}.
For $\dR=2d$ we could also prove that this bound can not be improved by simply adding simplices.

In Chapter \ref{upperBounds} we could improve the trivial upper bound of $\ueber{n}{d+1}$ by giving an upper bound of \[\ex(n,\mathcal{F}_d) = O(n^{d+1-\frac{1}{3^d}}).\]
We could also see that a better estimate of $\ex(n,\mathcal{F}_d)$ would not improve this bound considerably by showing that \[\ex(n,\mathcal{F}_d) \geq C \cdot n^{d+1-\frac{2(d+1)}{3^{d+1}-1}}.\]

To conclude, we now collect some ideas that could be pursued to get better results for Problem \ref{dieFrage}.
To get Corollary \ref{lowerBoundExEffDee} we proved the existence of an $\mathcal{F}_d$-free $(d+1)$-graph on $n$ vertices with at least \[C \cdot n^{d+1-\frac{2(d+1)}{3^{d+1}-1}}\] edges for large $n$.
For the case $d=1$ this shows that for large $n$ there is a graph not containing $K_5$ and $K_{3,3}$ with at least $C \cdot n^{\frac{3}{2}}$ edges.
By Wagner's Theorem (Theorem \ref{Wagner}) these two graphs characterize planar graphs as excluded minors; Kuratowski's Theorem (Theorem \ref{Kuratowski}) states that excluding subdivisions of $K_5$ and $K_{3,3}$ gives all planar graphs.
As the maximum size of a planar graph is of linear order (Lemma \ref{sizePlanarGraphs}), this shows that in this case the order of the maximal number of edges changes if one excludes subdivisions or minors instead of subgraphs.

This gives rise to hopes that excluding subdivisions of the hypergraphs in $\mathcal{F}_d$ or excluding themselves as minors might lead to a better upper bound than the one achieved in Proposition \ref{theUpperBound}.
However, while the concepts of subdivisions and minors are well understood for graphs, they appear to be not as clear and are less studied in the context of hypergraphs. In addition to that, the connection to embeddability is not as obvious. Nevo's concept of minors for simplicial complexes, which was used in this thesis, (Definition \ref{definitionMinor}) doesn't seem to translate easily to hypergraphs.

Another idea which doesn't seem easy to rule out is to use the Upper Bound Theorem for spheres in the following way:
If a $d$-complex $K$ on $n$ vertices is a subcomplex of some simplicial $r$-sphere $S$ with $n'=O(n)$ vertices, we know that $f_d(K) \leq f_d(S) \leq f_d(C_{r+1}(n')) = O(n'^{\lfloor\frac{r}{2}\rfloor})= O(n^{\lfloor\frac{r}{2}\rfloor})$ by the Upper Bound Theorem for spheres.
Suppose we would know that every $d$-complex $K$ on $n$ vertices that embeds into $\R^r$ is a subcomplex of such a sphere. This seems unlikely, but there also seems to be no counterexample yet.
If it were true, we would have an upper bound of $O(n^{\lfloor\frac{r}{2}\rfloor})$ for Problem \ref{dieFrage}, which would at least be of the same order as the one in Conjecture \ref{Conj3}. 

Furthermore, it might be possible to show that it suffices to prove Conjecture~\ref{Conj3} for all even $\dR$. The approach to this draws on the idea of the proof of Lemma~\ref{zweiDrei}, the solution for the case $d=2$ and $\dR=3$, which depends on the solution for the case $d=1$ and $\dR=2$ (planar graphs).

In this proof, the lower dimensional case gives upper bounds for the $f$-vectors of the vertex links of the complex. These bounds are then, via double counting, used to determine an upper bound for the whole complex.
An adaptation of this proof for higher \emph{odd} dimensions seems possible. For odd $\dR=2k+1$, we have $f_d(C_{\dR+1}(n))=O(n^{\lceil\frac{\dR}{2}\rceil})=O(n^{k+1})$, while $f_{d-1}(C_{\dR}(n))=O(n^{\lceil\frac{\dR-1}{2}\rceil})=O(n^k)$. Thus, the additional factor of $n$, which arises by summing over all vertices, conserves the right order in the odd case, whereas this inductive idea clearly fails for even $\dR$.
This idea might also work for reasonable upper bounds in even dimensions, other than $f_d(C_{\dR+1}(n))$.
%
\newpage
\addcontentsline{toc}{chapter}{Bibliography}
\bibliographystyle{amsalpha} 
\bibliography{Literatur}

\providecommand{\bysame}{\leavevmode\hbox to3em{\hrulefill}\thinspace}
\providecommand{\MR}{\relax\ifhmode\unskip\space\fi MR }
\providecommand{\MRhref}[2]{%
  \href{http://www.ams.org/mathscinet-getitem?mr=#1}{#2}
}
\providecommand{\href}[2]{#2}
\begin{thebibliography}{BGdO00}

\bibitem[AS00]{Alon.2000}
Noga Alon and Joel Spencer, \emph{{The Probabilistic Method}}, second ed.,
  Wiley-Interscience Series in Discrete Mathematics and Optimization, John
  Wiley {\&} Sons Inc., 2000.

\bibitem[BGdO00]{Bokowski.2000}
J{\"u}rgen Bokowski and Ant{\'o}nio Guedes~de Oliveira, \emph{On the generation
  of oriented matroids}, Discrete Comput. Geom. \textbf{24} (2000), no.~2-3,
  197--208.

\bibitem[BK88]{Bjoerner.1988}
Anders Bj{\"o}rner and Gil Kalai, \emph{An extended {Euler-Poincar{\'e}}
  theorem}, Acta Math. \textbf{161} (1988), no.~1, 279--303.

\bibitem[Bol02]{Bollobas.2002}
B{\'e}la Bollob{\'a}s, \emph{{Modern Graph Theory}}, Graduate Texts in
  Mathematics, Springer-Verlag, 2002.

\bibitem[Bre83]{Brehm.1983}
Ulrich Brehm, \emph{A nonpolyhedral triangulated {M\"o}bius strip}, Proc. Amer.
  Math. Soc. \textbf{89} (1983), no.~3, 519--522.

\bibitem[Bro66]{Brown.1966}
William~G. Brown, \emph{On graphs that do not contain a {T}homsen graph},
  Canad. Math. Bull. \textbf{9} (1966), no.~3, 281--285.

\bibitem[Bry72]{Bryant.1972}
John~L. Bryant, \emph{Approximating embeddings of polyhedra in codimension
  three}, Trans. Amer. Math. Soc. \textbf{170} (1972), 85--95.

\bibitem[BS92]{Brehm.1992}
Ulrich Brehm and Karanbir~S. Sarkaria, \emph{Linear vs. piecewise linear
  embeddability of simplicial complexes}, Tech. Report 92/52,
  Max-Planck-Institut f. Mathematik, Bonn, Germany, 1992.

\bibitem[DE94]{Dey.1994}
Tamal~K. Dey and Herbert Edelsbrunner, \emph{Counting triangle crossings and
  halving planes}, Discrete Comput. Geom. \textbf{12} (1994), 281--289.

\bibitem[DP98]{Dey.1998}
Tamal~K. Dey and J{\'a}nos Pach, \emph{Extremal problems for geometric
  hypergraphs}, Discrete Comput. Geom. \textbf{19} (1998), 473--484.

\bibitem[{Erd}64]{Erdos.1964}
Paul {Erd\H{o}s}, \emph{On extremal problems of graphs and generalised graphs},
  Israel J. Math. \textbf{2} (1964), no.~3, 183--190.

\bibitem[ERS66]{Erdos.1966}
Paul {Erd\H{o}s}, Alfr{\'e}d R{\'e}nyi, and Vera~T. S{\'o}s, \emph{On a problem
  of graph theory}, Studia Sci. Math. Hungar. \textbf{1} (1966), 215--235.

\bibitem[Flo33]{Flores.1933}
A.~Flores, \emph{{\"Uber $n$-dimensionale Komplexe, die nicht in den
  $\mathbb{R}^{2n}$ topologisch einbettbar sind}}, Ergebnisse eines
  mathematischen Kolloquiums \textbf{5} (1932/1933), 17--24.

\bibitem[Flo34]{Flores.1934}
\bysame, \emph{{\"Uber $n$-dimensionale Komplexe, die im $\mathbb{R}^{2n+1}$
  absolut selbst\-verschlungen sind}}, Ergebnisse eines mathematischen
  Kolloquiums \textbf{6} (1933/1934), 4--7.

\bibitem[F{\"u}r91]{Furedi.1991}
Zolt{\'a}n F{\"u}redi, \emph{Tur{\'a}n type problems}, Surveys in
  Combinatorics, 1991 (A.~D. Keedwell, ed.), London Mathematical Society
  Lecture Note Series, vol. 166, Cambridge Univ. Press, 1991.

\bibitem[Gr{\"u}69]{Gruenbaum.1969}
Branko Gr{\"u}nbaum, \emph{Imbeddings of simplicial complexes}, Comment. Math.
  Helv. \textbf{44} (1969), 502--513.

\bibitem[Gr{\"u}03]{Gruenbaum.2003}
\bysame, \emph{{Convex Polytopes : Second Edition Prepared by Volker Kaibel,
  Victor Klee, and G\"{u}nter Ziegler (Graduate Texts in Mathematics)}},
  Springer-Verlag, New York, 2003.

\bibitem[HJ64]{Halin.1964}
Rudolf Halin and Heinz~A. Jung, \emph{{Charakterisierung der Komplexe der Ebene
  und der $2$-Sph{\"a}re}}, Arch. Math. \textbf{15} (1964), 466--469.

\bibitem[J{\L{}}R00]{Janson.2000}
Svante Janson, Tomasz {\L{}}uczak, and Andrzej Ruci{\'n}ski, \emph{{Random
  Graphs}}, Wiley-Interscience Series in Discrete Mathematics and Optimization,
  John Wiley {\&} Sons Inc., 2000.

\bibitem[Kal84]{Kalai.1984}
Gil Kalai, \emph{{Characterization of $f$-vectors of families of convex sets in
  $\mathbb{R}^d$, Part I: Necessity of Eckhoff's conditions}}, Israel J. Math.
  \textbf{48} (1984), no.~2, 175--195.

\bibitem[Kal85]{Kalai.1985}
\bysame, \emph{{$f$-Vectors of acyclic complexes}}, Discrete Math. \textbf{55}
  (1985), no.~1, 97--99.

\bibitem[Kal88]{Kalai.1988}
\bysame, \emph{Many triangulated spheres}, Discrete Comput. Geom. \textbf{3}
  (1988), no.~1, 1--14.

\bibitem[Kal91]{Kalai.1991}
\bysame, \emph{{The diameter of graphs of convex polytopes and $f$-vector
  theory}}, Applied Geometry and Discrete Mathematics, The Victor Klee
  Festschrift (Peter Gritzmann and Bernd Sturmfels, eds.), \mbox{DIMACS} Series
  in Discrete Mathematics and Theoretical Computer Science, vol.~4, 1991,
  pp.~387--411.

\bibitem[Kal02]{Kalai.2002}
\bysame, \emph{Algebraic shifting}, Computational Commutative Algebra and
  Combinatorics, Advanced Studies in Pure Mathematics, vol.~33, Mathematical
  Society of Japan, 2002, pp.~121--163.

\bibitem[Kal08]{Kalai.2008}
\bysame, \emph{Extermal [sic] combinatorics ii: Some geometry and number
  theory}, Combinatorics and more - Gil Kalai's blog, July 2008,
  http://gilkalai.wordpress.com/2008/07/17/extermal-combinatorics-ii-some-geom%
etry-and-number-theory/.

\bibitem[Lic99]{Lickorish.1999}
W.~B.~Raymond Lickorish, \emph{Simplicial moves on complexes and mani\-folds},
  Geometry and Topology Monographs \textbf{2} (1999), 299--320.

\bibitem[Mat03]{Matousek.2003}
Ji\v{r}\'{i} Matou\v{s}ek, \emph{{Using the Borsuk--Ulam theorem: Lectures on
  Topological Methods in Combinatorics and Geometry}}, Universitext,
  Springer-Verlag, Berlin, 2003.

\bibitem[McM70]{McMullen.1970}
Peter McMullen, \emph{The maximum numbers of faces of a convex polytope},
  Mathematika \textbf{17} (1970), 179--184.

\bibitem[MTW09]{Matousek.2008}
Ji\v{r}\'{i} Matou\v{s}ek, Martin Tancer, and Uli Wagner, \emph{Hardness of
  embedding simplicial complexes in $\mathbb{R}^d$}, SODA '09: Proceedings of
  the Nineteenth Annual ACM -SIAM Symposium on Discrete Algorithms, Society for
  Industrial and Applied Mathematics, 2009, pp.~855--864.

\bibitem[MU05]{Mitzenmacher.2005}
Michael Mitzenmacher and Eli Upfal, \emph{{Probability and Computing:
  Randomized Algorithms and Probabilistic Analysis}}, Cambridge University
  Press, 2005.

\bibitem[Mun93]{Munkres.2005}
James~R. Munkres, \emph{{Elements of Algebraic Topology}}, Perseus Books, New
  York, 1993.

\bibitem[Mur07]{Murai.2007}
Satoshi Murai, \emph{Algebraic shifting of cyclic polytopes and stacked
  polytopes}, Discr. Mathematics \textbf{307} (2007), no.~14, 1707--1721.

\bibitem[Nev07]{Nevo.2007}
Eran Nevo, \emph{{Higher minors and Van Kampen's obstruction}}, Math. Scand.
  \textbf{101} (2007), 161--176.

\bibitem[Nov03]{Novik.2003}
Isabella Novik, \emph{Remarks on the upper bound theorem}, Combin. Theory Ser.
  A \textbf{104} (2003), no.~1, 201--206.

\bibitem[NW08]{Nevo.2008}
Eran Nevo and Uli Wagner, \emph{On the embeddability of skeleta of spheres},
  2008, to appear in Israel J. Math.

\bibitem[Sar91]{Sarkaria.1991}
Karanbir~S. Sarkaria, \emph{Kuratowski complexes}, Topology \textbf{30} (1991),
  no.~1, 67--76.

\bibitem[Sch93]{Schild.1993}
G{\"o}ran Schild, \emph{Some minimal nonembeddable complexes}, Topology Appl.
  \textbf{53} (1993), 177--185.

\bibitem[Sch06]{Schewe.2006}
Lars Schewe, \emph{Generation of oriented matroids using satisfiability
  solvers}, Mathematical Software - ICMS 2006, Lecture Notes in Computer
  Science, vol. 4151/2006, Springer Berlin/Heidelberg, 2006, pp.~216--218.

\bibitem[Sch08]{Schacht}
Mathias Schacht, \emph{Personal communication}, February 2008.

\bibitem[Sta75]{Stanley.1975}
Richard~P. Stanley, \emph{{The Upper Bound Conjecture and Cohen-Macaulay
  rings}}, Stud. Appl. Math. \textbf{54} (1975), 135--142.

\bibitem[Sta96]{Stanley.1996}
\bysame, \emph{{Combinatorics and Commutative Algebra}}, second ed., Progress
  in Mathematics, vol.~41, Birkh{\"a}user, 1996.

\bibitem[Umm73]{Ummel.1973}
Brian~R. Ummel, \emph{Imbedding classes and $n$-minimal complexes}, Proc. Amer.
  Math. Soc. \textbf{38} (1973), 201--206.

\bibitem[vK32a]{vanKampen.1932b}
Egbert~Rudolf van Kampen, \emph{Berichtigung zu {Komplexe in eu\-kli\-dischen
  R\"aumen}}, Abh. Math. Sem. Hamburg \textbf{9} (1932), 152--153.

\bibitem[vK32b]{vanKampen.1932a}
\bysame, \emph{Komplexe in euklidischen {R\"aumen}}, Abh. Math. Sem. Hamburg
  \textbf{9} (1932), 72--78.

\bibitem[Wes01]{West.2001}
Douglas~B. West, \emph{{Introduction to Graph Theory}}, second ed., {Prentice
  Hall}, 2001.

\bibitem[Zak69a]{Zaks.1969b}
Joseph Zaks, \emph{On a minimality property of complexes}, Proc. Amer. Math.
  Soc. \textbf{20} (1969), no.~2, 439--444.

\bibitem[Zak69b]{Zaks.1969a}
\bysame, \emph{On minimal complexes}, Pacif. J. Math. \textbf{28} (1969),
  no.~3, 721--727.

\bibitem[Zie98]{Ziegler.1998}
G{\"u}nter~M. Ziegler, \emph{{Lectures on Polytopes}}, second ed., Graduate
  Texts in Mathematics, Springer-Verlag, 1998.

\end{thebibliography}
\chapter*{Zusammenfassung}
\thispagestyle{empty}
\setlength{\parindent}{0pt}
Es ist nicht schwer zu zeigen, dass jeder $d$-dimensionale Simplizialkomplex eine Einbettung in den $\R^{2d+1}$ besitzt. Demzufolge ist die maximale Anzahl von \mbox{$d$-Simplizes} f\"ur einen in diesen Raum einbettbaren Komplex, der $n$ Ecken besitzt,  $\ueber{n}{d+1}= \Theta(n^{d+1})$.

F\"ur dem Fall $d=2$ liefert dies $\Theta(n^3)$ f\"ur Einbettbarkeit in den $\R^5$. Mit elementaren Methoden kann auch gezeigt werden, dass ein $2$-dimensionaler Simplizialkomplex auf $n$ Ecken, der in den $\R^3$ eingebettet werden kann, h\"ochstens aus $n(n-3) = \Theta(n^2)$ Dreiecken besteht. F\"ur in den $\R^4$ eingebettete \mbox{$2$-Komplexe} ist die Frage der maximalen Anzahl an Dreiecken ungekl\"art.

Diese Arbeit befasst sich mit der allgemeineren Frage, wieviele maximale Simplizes ein $d$-dimensionaler Simplizialkomplex, der sich in den $\R^{\dR}$, f\"ur $d \leq \dR \leq 2d$, einbetten l\"asst, h\"ochstens enthalten kann.
Mit Hilfe von zyklischen Polytopen, deren Randkomplexe Beispiele einbettbarer Komplexe bilden, erh\"alt man die untere Schranke $f_d(C_{\dR + 1}(n)) = \Omega(n^{\lceil\frac{\dR}{2}\rceil})$.

Es wird gezeigt, dass das Hinzuf\"ugen eines weiteren Simplizes zu dem Komplex $\mathcal{C}(\partial C_{\dR + 1}(n))^{\leq d}$ bei Verwendung der vorhandenen Ecken zu einem nicht mehr in den $\R^{\dR}$ einbettbaren Komplex f\"uhrt.
Die Schranke kann also nicht auf diesem einfachen Wege verbessert werden.

Um eine obere Schranke zu bekommen, wird hier die Idee auszuschlie\ss{}ender Unterkomplexe verfolgt.
Eine Verallgemeinerung des Satzes von van Kampen und Flores liefert eine Klasse von nicht in den $\R^{\dR}$ einbettbarer Komplexen. Ein Simplizialkomplex, der einen solchen als Unterkomplex besitzt, kann ebenfalls nicht einbettbar sein. Demzufolge enth\"alt die Klasse der Komplexe, die kein Oberkomplex eines dieser nicht einbettbaren Komplexe sind, die Klasse der in den $\R^{\dR}$ einbettbaren Komplexe.

F\"ur den Fall $\dR=2d$ kann mit den Methoden der extremalen Hypergraphentheorie eine obere Schranke f\"ur die maximale Anzahl von Simplizes in einem Simplizialkomplex, in dem die gegebenen nicht einbettbaren Komplexe als Unterkomplexe verboten sind, gefunden werden.
Diese ist also auch eine obere Schranke f\"ur die eigentliche Fragestellung im Fall $\dR=2d$ und hat die Gr\"o\ss{}en\-ordnung $O(n^{d+1-\frac{1}{3^d}})$.
Desweiteren wird gezeigt, dass man auf diesem Wege keine bessere Absch\"atzung als $O(n^{d+1-\frac{2(d+1)}{3^{d+1}-1}})$ erhalten kann.
\end{document}